\newtheorem{theorem}{Theorem}[section]
\newtheorem{proposition}[theorem]{Proposition}
\newtheorem{corollary}[theorem]{Corollary}
\theoremstyle{definition}
\newtheorem{definition}[theorem]{Definition}
\newtheorem{example}[theorem]{Example}
\theoremstyle{remark}
\numberwithin{equation}{section}
\def\t{\mbox{tr}\,} 
\newcommand\style{\mathcal }          
\newcommand{\B}{\style{B}}
\newcommand{\M}{\style{M}}
\renewcommand{\H}{\style{H}}
\newcommand{\K}{\style K}
\newcommand\cstar{{\rm C}^*}                              
\newcommand\spec{{\rm Sp}}      
\begin{document}

\title[Matrix Convexity and Dilations of Toeplitz-Contractive Tuples]{Matrix Convexity and 
Unitary Power Dilations of Toeplitz-Contractive Operator Tuples}

\author{Douglas Farenick}
\address{Department of Mathematics and Statistics, University of Regina, Regina, Saskatchewan S4S 0A2, Canada}
\curraddr{}
\email{douglas.farenick@uregina.ca}
\subjclass[2020]{47A20, 47A13, 46L07}

\begin{abstract} Using works of T.~Ando and L.~Gurvits, the
well-known theorem of P.R.~Halmos concerning the existence of unitary dilations for 
contractive linear operators acting on Hilbert spaces recast as a result 
for $d$-tuples of contractive Hilbert space operators satisfying a certain matrix-positivity condition. 
Such operator $d$-tuples satisfying this matrix-positivity condition are called, herein, 
Toeplitz-contractive, and a characterisation of the Toeplitz-contractivity condition is presented. 
The matrix-positivity condition leads to definitions of new distance-measures in several variable operator theory, 
generalising the notions of norm, numerical radius, and spectral radius to $d$-tuples of operators (commuting, for the spectral radius)
in what appears to be a novel, asymmetric way. 
Toeplitz contractive operators form a noncommutative convex set, and a scaling constant $c_d$ for inclusions of
the minimal and maximal matrix convex
sets determined by a stretching of the unit circle $S^1$ across $d$ complex dimensions is shown to exist.
\end{abstract}

\maketitle

 
 \section{Introduction}
 
The influential, foundational paper of Paul Halmos \cite{halmos1950} in 1950 examined the 
notions of dilation, compression, and 
extension in the algebra $\B(\H)$ of bounded linear operators acting on a complex Hilbert space $\H$.  
If $T:\H\rightarrow\H$ is a bounded linear operator, then a \emph{dilation} of $T$ is a bounded linear operator
 $S:\K\rightarrow\K$ acting on a Hilbert space $\K$ that contains $\H$ as a subspace
such that, for every $\xi\in H$, 
 \begin{equation}\label{e:dil}
 T\xi=P_{\H} S\xi,
 \end{equation} 
 where $P_{\H}$ is the linear selfadjoint projection operator on $\K$ whose range is 
 $\H$ and kernel is $\H^\bot$. Conversely, given a bounded linear operator $S$ 
 acting on $\K$ and a subspace $\H$ of $\K$, equation \eqref{e:dil}
 defines a bounded linear operator $T$ on $\H$, which is called the \emph{compression} of $S$. 
 A dilation $S$ of $T$ is an \emph{extension} of $T$ if the subspace $\H$ of $\K$ is invariant under the action of $S$.

 The general goal of dilation theory is to study an operator $T$, which may be difficult to analyse directly, by dilating $T$
 to an operator $S$ that
 enjoys well understood properties, and then using $S$ to analyse $T$. 
 For example, if $S$ is a dilation of $T$, then equation (\ref{e:dil}) indicates $\|T\|\leq \|S\|$; thus,
 if $S$ is a unitary operator, then every compression $T$ of $S$ is a contraction (i.e., $\|T\|\leq1$).
The theorem below of Halmos \cite{halmos1950} shows the converse is true. 
  
 \begin{theorem}[Halmos]\label{h1} 
 If $T$ is a contraction, then $T$ has a unitary dilation.
 \end{theorem}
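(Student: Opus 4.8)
The plan is to write down an explicit $2\times 2$ operator-matrix dilation and verify directly that it is unitary. Given a contraction $T$ on $\H$, the key observation is that both $I-T^*T$ and $I-TT^*$ are positive operators (since $\|T\|\le 1$), so their positive square roots $D_T := (I-T^*T)^{1/2}$ and $D_{T^*} := (I-TT^*)^{1/2}$ — the \emph{defect operators} — are well-defined bounded positive operators on $\H$. First I would set $\K = \H\oplus\H$ and propose the operator
\begin{equation}\label{e:haldil}
S = \begin{bmatrix} T & D_{T^*} \\ D_T & -T^* \end{bmatrix} : \H\oplus\H \to \H\oplus\H.
\end{equation}
Identifying $\H$ with $\H\oplus\{0\}\subseteq\K$, the projection $P_\H$ onto this subspace picks out the $(1,1)$ corner, so equation \eqref{e:dil} holds for $S$ by construction: $P_\H S\xi = T\xi$ for all $\xi\in\H$.

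The main content is then to check that $S$ is unitary, i.e.\ $S^*S = SS^* = I$ on $\K$. Computing $S^*S$ as a $2\times 2$ block matrix, the diagonal entries are $T^*T + D_T^2 = T^*T + (I-T^*T) = I$ and $D_{T^*}^2 + TT^* = (I-TT^*) + TT^* = I$, which is immediate. The off-diagonal entry is $T^*D_{T^*} - D_T T^*$, so the crux is the \emph{intertwining identity}
\begin{equation}\label{e:intertwine}
T^* D_{T^*} = D_T T^*, \qquad \text{equivalently} \qquad T D_T = D_{T^*} T.
\end{equation}
I would prove \eqref{e:intertwine} by first noting the purely algebraic fact $T(I-T^*T) = (I-TT^*)T$, hence $T\, p(I-T^*T) = p(I-TT^*)\, T$ for every polynomial $p$, and then passing to the limit via the Weierstrass approximation theorem applied to a sequence of polynomials converging uniformly to the square-root function $t\mapsto t^{1/2}$ on the compact interval $[0,1]$ (which contains the spectra of $I-T^*T$ and $I-TT^*$), using continuity of the functional calculus. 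With \eqref{e:intertwine} in hand the off-diagonal entries of $S^*S$ vanish, so $S^*S = I$; the computation of $SS^*$ is entirely symmetric, giving $SS^* = I$ as well.

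The step I expect to be the main (very mild) obstacle is justifying the intertwining identity \eqref{e:intertwine} carefully — it is the one place where one leaves elementary matrix algebra and must invoke the continuous functional calculus, and it is worth stating the polynomial-approximation argument explicitly rather than waving at it. Everything else is a bookkeeping computation with $2\times 2$ block matrices. One could alternatively cite \eqref{e:intertwine} as a standard lemma on defect operators, but since the paper is recasting this classical material it seems cleanest to include the short argument. No infinite-dimensional subtleties arise: $D_T$ and $D_{T^*}$ are bounded because $T$ is, and the functional calculus is the bounded Borel (indeed continuous) calculus, so the proof works verbatim whether $\H$ is finite- or infinite-dimensional.
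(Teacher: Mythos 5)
Your proposal is correct and is essentially the proof the paper has in mind: the paper explicitly notes that Halmos's argument is the constructive one on $\K=\H\oplus\H$ with an explicit $2\times 2$ operator matrix, and your matrix $\left[\begin{smallmatrix} T & D_{T^*} \\ D_T & -T^* \end{smallmatrix}\right]$ together with the intertwining identity $TD_T = D_{T^*}T$ (proved by polynomial approximation of the square root) is exactly that construction. The verification of $S^*S=SS^*=1_\K$ and the justification of the intertwining relation are both sound, so nothing further is needed.
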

 
 Halmos' proof of Theorem \ref{h1} is constructive in that 
 he shows the dilating Hilbert space can be taken to be $\K=\H\oplus\H$ and gives an explicit description of the unitary dilation
 $U$ as a $2\times 2$ matrix of operators. 
 
 Because $2\times 2$ matrices of operators can be used to recover the norm of a single operator via the formula
  \begin{equation}\label{e:n1}
 \|T\| = \inf\left\{\alpha\in\mathbb R_+\,|\,\left[\begin{array}{cc} \alpha 1_\H & T^* \\ T & \alpha 1_\H \end{array}\right]\mbox{ is a positive operator on }
 \H\oplus\H\right\},
 \end{equation}
 where $1_\H$ denotes the identity operator on $\H$, an equivalent formulation of Halmos' Theorem 
 can be made in terms of matrix
 positivity.
 
 \begin{theorem}[Halmos, Alternative Version]\label{h2} The following statements are equivalent for a bounded linear operator $T$ acting on a Hilbert space $\H$.
 \begin{enumerate}
 \item the operator matrix $\left[ \begin{array}{cc} 1_\H & T^* \\ T & 1_\H \end{array}\right]$, acting on $\K=\H\oplus\K$, 
 is a positive operator on $\K$;
 \item there exists a unitary operator $U\in\B(\K)$ such that $U$ is a dilation of $T$.
 \end{enumerate}
 \end{theorem}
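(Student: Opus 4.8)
The plan is to prove the equivalence of (1) and (2) in Theorem \ref{h2} by linking both statements to the norm condition $\|T\| \leq 1$ via the variational formula \eqref{e:n1}. The key observation is that for the positive scalar $\alpha = 1$, the operator matrix $\left[\begin{array}{cc} 1_\H & T^* \\ T & 1_\H \end{array}\right]$ is positive on $\H \oplus \H$ if and only if $\|T\| \leq 1$, which is immediate from \eqref{e:n1} (taking the infimum over $\alpha$, positivity at $\alpha = 1$ holds precisely when $1 \geq \|T\|$). So (1) is simply a restatement of $T$ being a contraction.

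For the implication (1) $\Rightarrow$ (2), I would invoke Theorem \ref{h1}: since (1) is equivalent to $\|T\| \leq 1$, Halmos' theorem directly gives a Hilbert space $\K$ containing $\H$ and a unitary $U \in \B(\K)$ dilating $T$. It is worth recording, as Halmos did, the explicit construction: one may take $\K = \H \oplus \H$ with $\H$ embedded as the first summand, and set
\[
U = \left[\begin{array}{cc} T & (1_\H - TT^*)^{1/2} \\ (1_\H - T^*T)^{1/2} & -T^* \end{array}\right],
\]
and then verify $U^*U = UU^* = 1_\K$ using the standard intertwining identity $T(1_\H - T^*T)^{1/2} = (1_\H - TT^*)^{1/2}T$; compressing to the first summand recovers $T$, confirming \eqref{e:dil}.

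For the converse (2) $\Rightarrow$ (1), suppose $U \in \B(\K)$ is a unitary dilation of $T$ with $\H \subseteq \K$. Since unitaries are contractions and compressions do not increase norm (as noted after \eqref{e:dil}, $\|T\| \leq \|U\| = 1$), we get $\|T\| \leq 1$, which by the $\alpha = 1$ case of \eqref{e:n1} is exactly statement (1). Alternatively, and more in the spirit of matrix positivity, one can argue directly: writing $V : \H \to \K$ for the isometric inclusion, so that $T = V^* U V$, positivity of $\left[\begin{array}{cc} 1_\H & T^* \\ T & 1_\H \end{array}\right]$ follows by compressing the manifestly positive operator $\left[\begin{array}{cc} 1_\K & U^* \\ U & 1_\K \end{array}\right]$ (which is positive because $\left[\begin{array}{cc} 1_\K & U^* \\ U & 1_\K \end{array}\right] = \left[\begin{array}{cc} 1_\K \\ U \end{array}\right]\left[\begin{array}{cc} 1_\K & U^* \end{array}\right] \cdot \tfrac{1}{?}$ — more simply, its Schur complement $1_\K - U^* U = 0 \geq 0$) through the ampliated isometry $V \oplus V : \H \oplus \H \to \K \oplus \K$.

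I do not anticipate a serious obstacle here: the entire content is the translation between the norm formulation of Halmos' theorem and the matrix-positivity formulation, together with the elementary fact that compressions of positive operators are positive. The only point requiring minor care is making sure the two-by-two positivity at $\alpha = 1$ is genuinely equivalent to $\|T\| \leq 1$ and not merely implied by it — but this is transparent from \eqref{e:n1}, since the set of valid $\alpha$ is an interval $[\|T\|, \infty)$.
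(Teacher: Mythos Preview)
Your proposal is correct and matches the paper's intended reasoning: the paper does not supply a separate proof of Theorem~\ref{h2} but presents it as an immediate reformulation of Theorem~\ref{h1} via the norm formula~\eqref{e:n1}, which is precisely the route you take. One small remark: in your alternative argument for (2)$\Rightarrow$(1), the factorisation $\left[\begin{smallmatrix}1_\K\\ U\end{smallmatrix}\right]\left[\begin{smallmatrix}1_\K & U^*\end{smallmatrix}\right]$ already equals $\left[\begin{smallmatrix}1_\K & U^*\\ U & 1_\K\end{smallmatrix}\right]$ on the nose (since $UU^*=1_\K$), so the trailing ``$\cdot\,\tfrac{1}{?}$'' is unnecessary---but your Schur-complement fallback is fine and nothing is lost.
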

 
The purpose of this paper is to bring together a variety of known results 
to show that a ``matrix positivity'' property leads to a version of Halmos's Theorem \ref{h2}
for not just one contraction, but for $d$ contractive operators, leading to new distance measures involving
operator $d$-tuples and new forms of joint numerical and spectral radii that mimic properties well-known to hold
for single operators.

 \begin{definition}[Toeplitz-contractive $d$-tuples]\label{tc def} A $d$-tuple 
 ${\sf T}=(T_1,\dots T_d)$ of bounded linear operators
 on a Hilbert space $\H$ is \emph{Toeplitz-contractive} if the Toeplitz operator-matrix
 \[
 \mathcal T= \left[ \begin{array}{ccccc} 1_\H & T_1^* & T_2^* & \dots & T_d^* \\
                                                              T_1 & 1_\H & T_1^* & \ddots & \vdots \\
                                                              T_2 & \ddots & \ddots & \ddots &  T_2^* \\
                                                              \vdots & \ddots & \ddots & \ddots & T_1^* \\
                                                              T_d & \dots & T_2 & T_1 & 1_\H 
 \end{array}\right] 
 \]
 is a positive operator on $\K=\displaystyle\bigoplus_1^{d+1}\H$.
 \end{definition}

Definition \ref{tc def} is not the usual way one defines contractivity of an operator $d$-tuple.  
One of the most widely used definitions is that of a row contraction.

\begin{definition}[Row-contractive $d$-tuples]\label{rc def}
A $d$-tuple 
 ${\sf T}=(T_1,\dots T_d)$ of operators $T_k$ on $\H$ is \emph{row contractive} if the 
 linear map $\displaystyle\bigoplus_1^d\H \rightarrow \H$ given by
 \[
 (\xi_1,\dots\,\xi_d) \longmapsto \sum_{k=1}^d T_k\xi_k
 \]
 is a contraction.
 \end{definition}
 
 As with single operators, the row-contractivity condition has an equivalent formulation as 
 a positivity condition:
 a $d$-tuple ${\sf T}$ is row-contractive 
 if and only if $1_\H-\displaystyle\sum_{k=1}^d T_kT_k^*$ is a positive operator.
 
 The simplest example of a Toeplitz-contractive $d$-tuple is the following one.
 
 \begin{example}\label{ex:power contraction}
 If $T\in\B(\H)$ is a contraction, then 
 ${\sf T}=(T, T^2, \dots, T^d)$ is Toeplitz-contractive.
 \end{example}
 
 \begin{proof} See \cite[pp. 13-14]{Paulsen-book} for a direction calculation showing
 ${\sf T}$ is Toeplitz contractive. 
\end{proof}
  
 \begin{corollary}\label{ex:prototype} If 
$\phi:\B(\H)\rightarrow\B(\K)$ is a unital completely positive
linear map, and if $U\in\B(\H)$
is unitary, then 
\[
\phi({\sf U})=\left(\phi(U),\phi(U^2),\dots,\phi(U^d)\right)
\]
is Toeplitz contractive, for every $d\in\mathbb N$.
\end{corollary}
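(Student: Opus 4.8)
The plan is to obtain this as an immediate consequence of Example \ref{ex:power contraction} by transporting the relevant positive operator-matrix through the canonical amplification of $\phi$. First, since $U$ is unitary it is in particular a contraction, so Example \ref{ex:power contraction} applies to the $d$-tuple ${\sf U}=(U,U^2,\dots,U^d)$: the Toeplitz operator-matrix $\mathcal T_{\sf U}\in\M_{d+1}(\B(\H))$ whose diagonal entries are $1_\H$, whose $k$-th subdiagonal entries are $U^k$, and whose $k$-th superdiagonal entries are $(U^k)^*$, is a positive operator on $\bigoplus_1^{d+1}\H$.

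Next, consider the amplification $\phi^{(d+1)}=\mathrm{id}_{\M_{d+1}}\otimes\phi\colon \M_{d+1}(\B(\H))\to\M_{d+1}(\B(\K))$, which acts on an operator-matrix by applying $\phi$ to each entry. Complete positivity of $\phi$ (indeed, mere $(d+1)$-positivity) guarantees that $\phi^{(d+1)}$ is a positive map, so $\phi^{(d+1)}(\mathcal T_{\sf U})\geq 0$. It then suffices to check that $\phi^{(d+1)}(\mathcal T_{\sf U})$ is exactly the Toeplitz operator-matrix $\mathcal T_{\phi({\sf U})}$ associated with $\phi({\sf U})=(\phi(U),\phi(U^2),\dots,\phi(U^d))$ as in Definition \ref{tc def}. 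The diagonal entries become $\phi(1_\H)=1_\K$ by unitality; the $k$-th subdiagonal entries become $\phi(U^k)$ by definition of $\phi^{(d+1)}$; and the $k$-th superdiagonal entries become $\phi\big((U^k)^*\big)=\phi(U^k)^*$, since every positive linear map is $*$-preserving. These are precisely the entries prescribed by Definition \ref{tc def} for the tuple $\phi({\sf U})$, so $\mathcal T_{\phi({\sf U})}=\phi^{(d+1)}(\mathcal T_{\sf U})\geq 0$; that is, $\phi({\sf U})$ is Toeplitz-contractive. Since $d\in\mathbb N$ was arbitrary, the conclusion holds for all $d$.

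There is no genuine obstacle here: the substance is supplied entirely by Example \ref{ex:power contraction}, and what remains is the standard fact that complete positivity upgrades to positivity of the $(d+1)$-fold amplification together with routine entrywise bookkeeping. The only point meriting a moment's attention is that bookkeeping --- confirming that the superdiagonal entries of $\phi^{(d+1)}(\mathcal T_{\sf U})$ really are $\phi(U^k)^*$, and not some operator unrelated to the $d$-tuple $\phi({\sf U})$ --- which is where $*$-preservation of $\phi$ enters. Alternatively, one could argue via Stinespring's theorem, writing $\phi(\,\cdot\,)=V^*\pi(\,\cdot\,)V$ for a unital $*$-representation $\pi$ and isometry $V$, and then noting that $\mathcal T_{\phi({\sf U})}=\big(\bigoplus_1^{d+1}V\big)^*\,\mathcal T_{\pi({\sf U})}\,\big(\bigoplus_1^{d+1}V\big)$ is a compression of the positive operator $\mathcal T_{\pi({\sf U})}$, the latter being positive because $\pi(U)$ is unitary.
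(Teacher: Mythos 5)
Your argument is correct and is essentially the paper's own proof: apply the $(d+1)$-fold amplification of $\phi$ entrywise to the positive Toeplitz operator-matrix of $(U,U^2,\dots,U^d)$ supplied by Example \ref{ex:power contraction}, and observe that the result is exactly the Toeplitz matrix of $\phi({\sf U})$. The extra bookkeeping about unitality and $*$-preservation, and the alternative Stinespring compression argument, are fine but add nothing beyond what the paper records.
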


\begin{proof} Because $\phi$ is completely positive, applying $\phi$ to each entry of the positive operator-matrix
\[
\mathcal U =  \left[ \begin{array}{ccccc} 1_\H & U^* & (U^2)^* & \dots & (U^d)^* \\
                                                             U & 1_\H & U ^* & \ddots & \vdots \\
                                                              U^2 & \ddots & \ddots & \ddots &  (U^2)^* \\
                                                              \vdots & \ddots & \ddots & \ddots & U ^* \\
                                                              U^d & \dots &U^2 & U  & 1_\H 
 \end{array}\right] 
\]
yields a new positive Topeplitz matrix of operators, which is precisely the one induced by the 
$d$-tuple $\phi({\sf U})=\left(\phi(U),\phi(U^2),\dots,\phi(U^d)\right)$.
\end{proof}

 Corollary \ref{ex:prototype} demonstrates that, for a given unitary $U\in\B(\H)$ and subspace $\mathcal L$ of $\H$, 
the compressions of $U^k$
to $\mathcal L$ lead to a sequence of operators 
$T_k=P_{\mathcal L}U^k_{\vert\mathcal L}$ on $\mathcal L$ such that the $d$-tuple $(T_1,\dots,T_d)$ is Toeplitz contractive, 
for every $d\in\mathbb N$. 
The main result of this paper, Theorem \ref{main result}, establishes the converse: 
that all Toeplitz-contractive $d$-tuples result through compressing powers of unitaries to a common subspace.

The study of norms and dilations of $d$-tuples of operators has a long history. In some works, such 
as \cite{binding--farenick--li1995},
normal dilations are invoked to define a norm on operator $d$-tuples. This is also done in the present paper, resulting in a metric rather than
a norm. Although a substantial literature has been developed concerning models for row-contractive $d$-tuples, dating back to \cite{bunce1984}, 
modern approaches explore, as in 
\cite{davisdon--dor-on--shalit--solel2017,passer--shalit--solel2018},
the
close connection between normal dilations of $d$-tuples of operators and matrix convex sets, yielding applications to the matrix ranges
\cite{smith--ward1980} of operators.
\section{Further Examples, Notation, and Definitions}

The Cartesian product of $d$-copies of $\B(\H)$ is denoted by $\B(\H)^d$.
 
If $\phi:\B(\H)\rightarrow\B(\K)$ is a linear map and if
$p\in\mathbb N$, then the $p$-th ampliation of $\phi$ is denoted by $\phi^{[p]}$
and designates the linear map $\M_p\left(\B(\H)\right)\rightarrow \M_p\left(\B(\K) \right)$ in which
\[
 \phi^{[p]}\left(\left[ X_{ij}\right]_{i,j=1}^p\right)=\left[ \phi(X_{ij}) \right]_{i,j=1}^p,
\]
for every $p\times p$
operator matrix $\left[ X_{ij}\right]_{i,j=1}^p\in\M_p\left(\B(\H)\right)$.  
The map $\phi:\B(\H)\rightarrow\B(\K)$ is unital and completely positive (ucp) 
if $\phi(1_\H)=1_\K$ and $\phi^{[p]}$ is a positive linear map for every $p\in\mathbb N$.

Row-contractivity and Toeplitz-contractivity are defined in quite distinctive ways. 
For example, row-contractivity does not depend on the order of the operators in the $d$-tuple, 
whereas Toeplitz-contractivity does, as $T_1$ appears as an entry in the Toeplitz matrix far more
frequently then $T_d$ does. 
The example below confirms these are indeed different concepts.

\begin{example} The notions of row-contractive and Toeplitz-contractive are distinct in that 
neither one of them implies the other.
\end{example} 

\begin{proof}
If $U$ is a unitary operator,
then Example \ref{ex:power contraction} shows $(U, U^2)$ is Toeplitz-contractive. However, $UU^*+U^2(U^2)^*=2\cdot 1_\H$,
which is not bounded above by $1_\H$. Hence, $(U, U^2)$ is Toeplitz-contractive, but not row-contractive. 

Conversely, if $P$ and $Q$ are commuting nonzero
projections such that $PQ=0$, then $(P,Q)$ is row contractive. Consider
the Toeplitz matrix
\[
\mathcal P =\left[ \begin{array}{ccc} 1_\H & P & Q \\ P & 1_\H & P \\ Q & P & 1_\H \end{array}\right].
\]
Select a unit vector $\xi\in\H$ in the range of $P$, and consider the 
state $\phi$ given by $\phi(X)=\langle X\xi,\xi\rangle$, for bounded linear operators $X$ on $\H$. 
Thus, $\phi(1_\H)=\phi(P)=1$
and $\phi(Q)=0$; therefore, applying $\phi$ to each entry of $\mathcal P$ yields the $3\times 3$ complex matrix 
\[
\phi^{[3]}(\mathcal P)= \left[ \begin{array}{ccc} 1 & 1 & 0 \\ 1&1&1 \\ 0&1&1 \end{array}\right].
\]
However, the matrix $\phi^{[3]}(\mathcal P)$ is not a positive operator 
on $\mathbb C^3$, which implies the matrix $\mathcal P$ cannot be a positive operator on $\H$
(because $\phi^{[3]}$ is a positive linear map). 
Hence, the row contractive 
pair $(P,Q)$ is not Toeplitz contractive. 
\end{proof}

By a similar argument, we note:

\begin{example} If $V_1$ and $V_2$ are isometries such that $V_1V_1^*+V_2V_2^*=1_\H$, then 
$(V_1,V_2)$ is row contractive but not Toeplitz contractive.
\end{example}

\begin{example} \label{pairs}
The following statements are equivalent for $T\in\B(\H)$:
\begin{enumerate}
\item $(T,0)$ is Toeplitz contractive;
\item $(T,T^*)$ is row contractive.
\end{enumerate}
\end{example}

\begin{proof} The matrix 
\[
\left[ \begin{array}{ccc} 1_\H & T^* & 0 \\ T &1_\H &T^* \\ 0 & T & 1_\H \end{array}\right]
\]
is positive if and only if $1_H-TT^*-T^*T$
is positive \cite[Lemma 6.3]{farenick--kavruk--paulsen2013}.
\end{proof}

The operator-theoretic notions of normality, subnormality, and unitary take the following forms in several variables.

\begin{definition}\label{d:nusn} A $d$-tuple ${\sf T}=(T_1,\dots T_d)\in\B(\H)^d$ is:
\begin{enumerate}
\item \emph{normal}, if $T_1$,\dots, $T_d$ are commuting normal operators;
\item \emph{unitary}, if $T_1$,\dots, $T_d$ are commuting unitary operators;
\item \emph{power unitary}, if there is a unitary operator $U\in\B(\H)$ such that $T_k=U^k$, for each $k=1,\dots, d$;
\item \emph{subnormal}, if $T_1$,\dots, $T_d$ are pairwise
commuting operators such that there exist a Hilbert space $\K$ containing $\H$ as a 
subspace and commuting normal operators $N_1,\dots,N_d\in\B(\K)$ such that, for every $k=1,\dots, d$, 
$\H$ is an invariant subspace for $N_k$ 
and the restriction of
$T_N$ to $\H$ is given by $T_k$.
\end{enumerate}
\end{definition}

It is known that a $d$-tuple of commuting subnormal operators may fail to be subnormal in the sense of (4) above; that is,
there exist $d$ commuting subnormal operators that fail to have an extension to $d$ commuting normal operators.

Of the various definitions 
of joint spectrum for commuting operators, the following two are required. Recall from \cite{taylor1970} that every $d$-tuple
${\sf T}\in\B(\H)^d$ of pairwise commuting operators and $\lambda\in\mathbb C^d$
determine a chain complex, denoted by $E({\sf T}-\lambda,\H)$, called the Koszul complex.

\begin{definition}\label{d:js}
The \emph{Gelfand joint spectrum} of a normal $d$-tuple ${\sf N}=(N_1,\dots,N_d)$
is the subset
$\spec_G({\sf N})\subset\mathbb C^d$ 
of all $\lambda\in\mathbb C^d$ for which there exists a unital homomorphism $\pi:\cstar({\sf N})\rightarrow\mathbb C$
such that $\lambda_j=\pi(N_j)$ for all
$j=1,\dots,d$, where $\cstar({\sf N})$ is the unital abelian C$^*$-algebra generated by $N_1,\dots,N_d$.
\end{definition}

\begin{definition}\label{d:ts}
The \emph{Taylor joint spectrum} of a $d$-tuple ${\sf T}=(T_1,\dots,T_d)$
of pairwise commuting (possibly nonnormal) operators
is the subset
$\spec_T({\sf T})\subset\mathbb C^d$ 
of all $\lambda\in\mathbb C^d$ 
for which the Koszul complex $E({\sf T}-\lambda,\H)$ is not exact.
\end{definition}

Lastly, the following forms of (joint) numerical range and matrix range 
\cite{davisdon--dor-on--shalit--solel2017,smith--ward1980}
are considered. 

\begin{definition}\label{d:qaz} Assume that ${\sf T}=(T_1,\dots,T_d)\in\B(\H)^d$.
The \emph{spatial numerical range} of ${\sf T}$ is the set
\begin{equation}\label{e:jnr}
W({\sf T})=\left\{ \left( \langle T_1\xi,\xi\rangle, \dots, \langle T_1\xi,\xi\rangle\right)\,|\,\xi\in\H, \, \|\xi\|=1\right\};
\end{equation}
the \emph{numerical range} of ${\sf T}$ is the set
\begin{equation}\label{e:jnra}
W_1({\sf T})=\left\{ \left( \phi(T_1), \dots, \phi(T_d)\right)\,|\,\phi\mbox{ is a state on }\B(\H) \right\};
\end{equation}
and the 
\emph{$n$-th matix range} of ${\sf T}$ is the set
\begin{equation}\label{e:jmra}
W_n({\sf T})=\left\{ \left( \phi(T_1), \dots, \phi(T_d)\right)\,|\,\phi\mbox{ is a ucp map }\B(\H)\rightarrow\M_n(\mathbb C) \right\}.
\end{equation}
\end{definition}

The numerical ranges defined in (\ref{e:jnr}) above have been extensively studied in operator theory, especially in the cases where $\H$
has finite dimension. 
In operator and matrix theory, $W({\sf T})$ is usually called the \emph{joint numerical range} of ${\sf T}$.

\begin{definition}\label{d:uTm} 
The \emph{universal positive $n\times n$ Toeplitz matrix} 
is the Toeplitz-matrix-valued function $T_n:S^1\rightarrow\M_n(\mathbb C)$ given by
\begin{equation}\label{e:univ T}
T_n(z)=\left[ \begin{array}{cccccc} 
1 & z^{-1} & z^{-2} &  \dots&  z^{-n+2}& z^{-n+1} \\
z & 1 & z^{-1} & z^{-2}& \dots & z^{-n+2} \\
z^2 & z & 1 & z^{-1} &\ddots&   \vdots\\
\vdots & \ddots & \ddots & \ddots &\ddots &z^{-2} \\
z^{n-2} &   &  \ddots & \ddots &\ddots &z^{-1} \\
z^{n-1} & z^{n-2} & \dots & z^{2} &z&1
\end{array}
\right].
\end{equation}
\end{definition}

The matrix $T_n(z)$ defined in \eqref{e:univ T} admits a factorisation of the form $T_n(z)=\gamma_n(z)\gamma_n(z)^*$, 
where 
\[
\gamma_n(z)=\left[\begin{array}{c}1 \\ z  \\ z^2 \\ \vdots \\ z^{n-1} \end{array}\right].
\]
By functional calculus, if $U$ is any unitary, then the operator matrix $T_n(U)$ is positive, and the first column 
below the $(1,1)$-entry is a power-unitary operator $(n-1)$-tuple.

 \begin{theorem}\label{Toepl ext pts}{\rm (\cite[Proposition 4.8(1)]{connes-vansuijlekom2021})}  
The extremal rays of the cone of positive $n\times n$ Toeplitz matrices
are precisely the positive scalar multiples of Toeplitz matrices of the form $T_n(\lambda)$, for $\lambda\in S^1$.
\end{theorem}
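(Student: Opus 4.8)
The plan is to pass to a compact base of the cone $C$ of positive semidefinite $n\times n$ Toeplitz matrices and combine the rank-one structure $T_n(\lambda)=\gamma_n(\lambda)\gamma_n(\lambda)^*$ with Milman's partial converse to the Krein--Milman theorem and the classical solvability of the truncated trigonometric moment problem. The cone $C$ is pointed and the trace is strictly positive on $C\setminus\{0\}$, so $K=\{A\in C:\tr A=n\}$ is a compact convex base of $C$: it is closed, and $\tr A=n$ forces the constant diagonal of $A$ to equal $1$, hence $\abs{a_k}\le1$, so $K$ is bounded. Under $A\mapsto(n/\tr A)A$ the extremal rays of $C$ correspond bijectively to the extreme points of $K$. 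Since $\tr T_n(\lambda)=n$, the set $E=\{T_n(\lambda):\lambda\in S^1\}$ lies in $K$, and $E$ is compact, being a continuous image of $S^1$. Thus it suffices to prove $\operatorname{ext}(K)=E$.

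First I would check $E\subseteq\operatorname{ext}(K)$, which uses only $\rank T_n(\lambda)=1$. If $T_n(\lambda)=\tfrac12(B_1+B_2)$ with $B_1,B_2\in K$, then $B_1\le B_1+B_2=2T_n(\lambda)$ because $B_2\ge0$, so every vector $w$ with $w\perp\gamma_n(\lambda)$ satisfies $0\le\langle B_1w,w\rangle\le 2\abs{\langle w,\gamma_n(\lambda)\rangle}^2=0$, and positivity of $B_1$ forces $B_1w=0$. Hence $\ran B_1\subseteq\mathbb C\gamma_n(\lambda)$, so $B_1=tT_n(\lambda)$ for some $t\ge0$, and $\tr B_1=n$ gives $t=1$; likewise $B_2=T_n(\lambda)$. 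So $T_n(\lambda)$ is an extreme point of $K$.

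For the reverse inclusion I would invoke the Carath\'eodory--Toeplitz theorem: every positive semidefinite $n\times n$ Toeplitz matrix $A$ can be written as $A=\int_{S^1}T_n(\lambda)\,d\mu(\lambda)$ for some positive Borel measure $\mu$ on $S^1$ (equivalently, the entries of $A$ are the trigonometric moments of $\mu$ up to order $n-1$). This is classical and may be cited, or derived by extending the positive-definite finite sequence of entries of $A$ to a positive-definite function on $\mathbb Z$ and applying the Herglotz--Bochner theorem. For $A\in K$ one may take $\mu$ to be a probability measure, so $A$ is the barycenter of the pushforward probability measure on the compact set $E\subseteq\M_n(\mathbb C)$, whence $A\in\overline{\operatorname{conv}}(E)$; since also $E\subseteq K$ with $K$ compact and convex, the reverse inclusion holds and $K=\overline{\operatorname{conv}}(E)$. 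Milman's theorem then gives $\operatorname{ext}(K)\subseteq E$. Combining the two inclusions yields $\operatorname{ext}(K)=E$, and translating back through the base correspondence proves the theorem.

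The only non-elementary ingredient is the Carath\'eodory--Toeplitz representation; everything else is convex geometry together with the rank-one factorisation of $T_n(\lambda)$, so I expect the main obstacle to be purely expository --- stating the moment representation at the right level of generality and running the barycenter step cleanly in the finite-dimensional ambient space. Alternatively one can bypass barycenters with a Hahn--Banach separation against $\overline{\operatorname{conv}}(E)$, using $\tr\big(BT_n(\lambda)\big)=\gamma_n(\lambda)^*B\gamma_n(\lambda)$ for Hermitian Toeplitz $B$ together with the Fej\'er--Riesz theorem, or argue by ranks using Carath\'eodory's theorem that a positive semidefinite Toeplitz matrix of rank $r<n$ equals $\sum_{i=1}^{r}w_iT_n(\lambda_i)$ with distinct $\lambda_i\in S^1$ and $w_i>0$ (then a rank $\ge2$ such matrix is visibly a non-trivial sum, a full-rank one is a relative interior point of $C$ and so not extreme when $n\ge2$, and a rank-one one is a positive multiple of some $T_n(\lambda)$).
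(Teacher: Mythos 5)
The paper does not prove this statement; it is imported verbatim from Connes--van Suijlekom, so your argument cannot be compared against an in-paper proof, only judged on its own. It is correct, and it is a genuinely self-contained route modulo one classical citation. The passage to the compact base $K=\{A\in C:\operatorname{tr}A=n\}$ is sound (the normalised diagonal together with positivity of the $2\times2$ principal minors bounds the entries), the rank-one argument that each $T_n(\lambda)=\gamma_n(\lambda)\gamma_n(\lambda)^*$ is extreme is airtight, and the converse via the Carath\'eodory--Toeplitz representation $A=\int_{S^1}T_n(\lambda)\,d\mu(\lambda)$ plus Milman's partial converse works: since $E=\{T_n(\lambda):\lambda\in S^1\}$ is compact in a finite-dimensional space, $\operatorname{conv}(E)$ is already closed, the barycentre lands in it, and $K=\operatorname{conv}(E)$ forces $\operatorname{ext}(K)\subseteq E$. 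Two caveats. First, the aside that the moment representation can be ``derived by extending the positive-definite finite sequence to $\mathbb Z$ and applying Herglotz--Bochner'' understates matters: that extension step \emph{is} the substance of the Carath\'eodory--Toeplitz theorem, so this ingredient should simply be cited --- or replaced by your third alternative, Carath\'eodory's finite decomposition $A=\sum_{i=1}^{r}w_iT_n(\lambda_i)$, which yields $\operatorname{ext}(K)\subseteq E$ directly and dispenses with Milman altogether (and note $\tfrac1n\sum_{j=0}^{n-1}T_n(\omega^j)=1_n$ for $\omega=e^{2\pi i/n}$ handles the full-rank case very concretely). Second, what your approach buys over the paper's bare citation is a proof pitched entirely at finite-dimensional convexity plus the truncated trigonometric moment problem --- the same circle of ideas (Ando, Gurvits) the paper already leans on --- which makes the theorem's role in the proof of Theorem \ref{minmax}(2) essentially transparent.
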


To complete this section, let $S^1$ denote the unit circle in the complex plane and denote
the 
multiplication operator $f\mapsto \varphi\cdot f$
on $L^2(S^1)$, where $\varphi:S^1\rightarrow\mathbb C$ is a continuous function, by $M_\varphi$. 
For each $k\in\mathbb Z$, let $\chi_k(z)=z^k$, for $z\in S^1$; thus, $\chi_{-k}=\overline{\chi_k}$ and $\chi_k=\chi^k$, for every
$k\in\mathbb Z$. By the Stone-Weierstrass Theorem, the unital abelian C$^*$-algebra $C(S^1)$ of complex continous
functions $S^1\rightarrow \mathbb C$ coincides with the unital 
C$^*$-algebra $\cstar(\chi)$ generated by $\chi$, which in turn is the group C$^*$-algebra generated by $\mathbb Z$. Thus,
$\chi$ is a universal unitary, which is to say that if $U$ is any unitary operator acting on $\B(\H)$, then there is a unital
$*$-homomorphism $\pi:C(S^1)\rightarrow\B(\H)$ such that $\pi(\chi)^k=U^k$, for every $k\in\mathbb Z$. Because the 
unital $*$-homomorphism $\pi:C(S^1)\rightarrow \B\left(L^2(S^1)\right)$ in which $\pi(\varphi)=M_\varphi$, for every $\varphi\in C(S^1)$,
is an isometry, the unitary operator $M_\chi$ is also a universal unitary. In representing $M_\chi$ as a doubly infinite (Laurent)
matrix relative to the canonical orthonormal basis of $L^2(S^1)$, the operator
$M\chi$ is unitarily equivalent to the bilateral shift operator
$W$ acting on the Hilbert space $\ell^2(\mathbb Z)$. Thus, the bilateral shift operator is a universal unitary operator.

 \section{The Dilation Theorem}
 
The following theorem connects various results in the literature to
characterise Toeplitz-contractive $d$-tuples in terms of norm conditions and unitary power dilations. The key step, asserting
the existence of the dilation, is achieved in three separate ways: 
by applying Ando's theorem \cite{ando1970,ando2013} on truncated moment problems;
by appealing to Gurvits' theorem \cite{gurvits2001,gurvits--burnam2002}
on the separability of positive block-Toeplitz matrices; or by invoking the theory of 
pure completely positive linear maps \cite{farenick--mcburney2023}.

 \begin{theorem}\label{main result} The following statements are equivalent for a  
 $d$-tuple ${\sf T}=(T_1,\dots T_d)$ of bounded linear operators acting 
 on a Hilbert space $\H$:
 \begin{enumerate} 
 \item[{(a)}] the $d$-tuple ${\sf T}$ is Toeplitz-contractive;
 \item[{(b)}] for all $m\times m$ complex matrices $A_0,A_1,\dots A_d$, and all positive integers $m$,
 \[
 \left\| 1_\H \otimes A_0 + \sum_{k=1}^d T_k\otimes A_k\right\| \leq \max_{z\in\mathbb C,\,|z|=1}\left\| A_0 + \sum_{k=1}^d z^kA_k\right\|; 
 \]
 \item[{(c)}] there exists a Hilbert space $\K$ containing $\H$ as a subspace and a unitary operator $U$ on $\K$
 such that $U^k$ is a dilation of $T_k$, for each $k=1,\dots,d$.
 \end{enumerate}
 If the Hilbert space $\H$ is separable, then there is an additional equivalent assertion:
 \begin{enumerate} 
 \item[{(d)}] for every $\varepsilon>0$, there is an isometry $V_\varepsilon:\H\rightarrow\ell^2(\mathbb Z)$ such that
 \[
 \|T_k-V_\varepsilon^* W^k V_\varepsilon\|<\varepsilon, 
 \]
 for each $k=1,\dots,d$, where $W$ denotes the bilateral shift operator on $\ell^2(\mathbb Z)$.
 \end{enumerate}
 Moreover, if $\H$ has finite dimension and the equivalent 
 conditions (a)-(d) hold, then there exists a dilating Hilbert space $\K$, in (c), 
 such that $\K$ has finite dimension.
 \end{theorem}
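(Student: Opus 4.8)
The plan is to establish the cycle $(a)\Rightarrow(c)\Rightarrow(b)\Rightarrow(a)$ first, then derive $(d)$ from the others in the separable case, and finally address the finite-dimensionality refinement.

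The implication $(c)\Rightarrow(b)$ is the easy direction: if $U^k$ dilates $T_k$ on $\K\supseteq\H$, then for any matrices $A_0,\dots,A_d\in\M_m(\mathbb C)$ one has $1_\H\otimes A_0+\sum_k T_k\otimes A_k=(P_\H\otimes 1)\,(1_\K\otimes A_0+\sum_k U^k\otimes A_k)\,(P_\H\otimes 1)\big|_{\H\otimes\mathbb C^m}$, so the norm on the left is bounded by the norm of $1_\K\otimes A_0+\sum_k U^k\otimes A_k$; since $U$ is unitary, the functional calculus for the universal unitary $\chi\in C(S^1)$ identifies this with $\sup_{|z|=1}\|A_0+\sum_k z^kA_k\|$ (one factors through a $*$-homomorphism $C(S^1)\to\B(\K)$ sending $\chi^k\mapsto U^k$). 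For $(b)\Rightarrow(a)$, I would test condition $(b)$ against a carefully chosen block data set that recovers positivity of the Toeplitz matrix $\mathcal T$: positivity of $\mathcal T$ is equivalent to $\|\mathcal T - \lambda I\|\le \lambda$ after a translation, or more directly one checks that the Toeplitz structure of $\mathcal T$ lets one write $\mathcal T$ (up to the obvious diagonal shift and scaling) in terms of the operator $\sum_k T_k\otimes E_k$ for suitable shift matrices $E_k\in\M_{d+1}(\mathbb C)$, and the scalar maximum on the right of $(b)$ evaluates to $1$ for that choice because $T_{d+1}(z)$ is positive of norm governed by $\gamma_{d+1}(z)\gamma_{d+1}(z)^*$. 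The genuinely hard implication is $(a)\Rightarrow(c)$: here I would invoke one of the three cited routes. The cleanest is Ando's solution of the truncated trigonometric moment problem in the operator-valued setting: positivity of the Toeplitz matrix $\mathcal T$ with symbol data $1_\H,T_1,\dots,T_d$ says exactly that the operator-valued ``moments'' $c_0=1_\H$, $c_k=T_k$ admit an extension to a positive-definite sequence $(c_k)_{k\in\mathbb Z}$ (with $c_{-k}=c_k^*$), which by the operator Herglotz/Naimark machinery is the Fourier transform of a positive $\B(\H)$-valued measure on $S^1$; the Naimark dilation of that measure produces a Hilbert space $\K\supseteq\H$ and a unitary $U$ (multiplication by $\chi$ on an $L^2$-type space) with $P_\H U^k\big|_\H=c_k=T_k$, which is precisely $(c)$. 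Alternatively one quotes Gurvits' theorem that a positive block-Toeplitz matrix is separable, hence a (norm-convergent, in finite dimensions, weak-$*$ in general) mixture of rank-one positive Toeplitz blocks $T_{d+1}(\lambda)\otimes R_\lambda$; each $T_{d+1}(\lambda)$ is the Gram matrix of $\gamma_{d+1}(\lambda)$ built from the unitary $\lambda\in S^1$, and assembling these over the mixture yields the unitary dilation.

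For $(d)$, assume $\H$ separable and $(c)$ holds with unitary $U$ on some $\K$. By the universality discussion at the end of \S2, $U$ generates a representation of $C(S^1)$, and since $C(S^1)\cong\cstar(\chi)$ with $M_\chi$ on $L^2(S^1)$ a faithful (hence isometric) model, one can approximate: the representation of $C(S^1)$ on $\overline{\cstar(U)\H}\subseteq\K$, a separable space, is (approximately) unitarily equivalent to a subrepresentation of an ampliation of $M_\chi\cong W$ on $\ell^2(\mathbb Z)$. Spelling this out, for each $\varepsilon>0$ one produces a finite-rank approximation of the spectral measure of $U$ living on finitely many arcs, embeds the resulting finite-dimensional compression isometrically into a large-enough copy of $\ell^2(\mathbb Z)$ via a suitable shift, and checks $\|T_k-V_\varepsilon^*W^kV_\varepsilon\|<\varepsilon$ for $k\le d$ by controlling $\|U^k-(\text{approx})\|$ on the relevant subspace. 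The main technical point is that $W$ has spectrum all of $S^1$ with no eigenvalues, so one must use finite \emph{blocks} of the shift (compressions of $W$ to large intervals, which approximate $M_\chi$ in the appropriate strong sense on band-limited vectors) rather than genuine eigenspaces; this is where I expect the most care, though it is standard once phrased via the Fejér kernel / band-truncation.

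Finally, for the finite-dimensional refinement: when $\dim\H=N<\infty$, the Toeplitz matrix $\mathcal T$ is a positive $(d+1)\times(d+1)$ block matrix of $N\times N$ blocks, i.e.\ a positive element of $\M_{d+1}\otimes\M_N$. Gurvits' separability theorem then gives a \emph{finite} convex decomposition $\mathcal T=\sum_{j=1}^r T_{d+1}(\lambda_j)\otimes R_j$ with $\lambda_j\in S^1$, $R_j\in(\M_N)_+$, and $r\le (d+1)^2N^2$ by Carathéodory; equivalently, by Theorem \ref{Toepl ext pts} applied after compressing by states, the extremal structure forces such a decomposition. Each summand corresponds to the power-unitary tuple $(\lambda_j,\lambda_j^2,\dots,\lambda_j^d)$ acting on $\mathbb C$, weighted by $R_j$; the direct sum $U=\bigoplus_j (\lambda_j 1_{\operatorname{ran}R_j})$ on $\K=\bigoplus_j\operatorname{ran}R_j$, with $\H$ embedded via the square roots $R_j^{1/2}$ suitably normalised, is a finite-dimensional unitary whose powers dilate the $T_k$. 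The obstacle here is purely bookkeeping — verifying that the isometric embedding of $\H$ into $\K$ coming from the decomposition of $\mathcal T$ is genuinely an isometry and intertwines compressions correctly — and this reduces to the identity $\mathcal T=\Gamma\Gamma^*$ where $\Gamma$ is the column $[\gamma_{d+1}(\lambda_j)\otimes R_j^{1/2}]_j$, which is exactly the finite analogue of the Naimark construction.
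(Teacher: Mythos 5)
Your overall architecture matches the paper's: (a)$\Rightarrow$(c) via Ando's operator trigonometric moment theorem (equivalently, the Herglotz--Naimark route) or Gurvits' separability theorem, (c)$\Rightarrow$(b) by compression plus the commutative functional calculus for the unitary, and the finite-dimensional refinement via the finite decomposition $\mathcal T=\sum_j T_{d+1}(\lambda_j)\otimes R_j$ with the isometry built from the $R_j^{1/2}$ --- all of that is sound and is essentially what the paper does. The step where your sketch would fail as written is (b)$\Rightarrow$(a). Condition (b) constrains only operators of the form $1_\H\otimes A_0+\sum_k T_k\otimes A_k$; the adjoints $T_k^*$ never appear in it. Writing $\mathcal T=1\otimes I_{d+1}+X+X^*$ with $X=\sum_k T_k\otimes E_k$, positivity of $\mathcal T$ is a one-sided numerical-range condition on $X$, not a norm condition, and no choice of $A_0,\dots,A_d$ produces $\mathcal T-\lambda I$ (it would have to manufacture the $T_k^*\otimes E_k^*$ terms), so the reformulation $\|\mathcal T-\lambda I\|\leq\lambda$ cannot be tested against (b) directly. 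The missing ingredient is the operator-space extension the paper invokes: (b) says exactly that the unital map $\psi:\chi_k\mapsto T_k$ on $\mathcal Z_d=\operatorname{span}\{\chi_0,\dots,\chi_d\}\subset C(S^1)$ is completely contractive, and a unital complete contraction on a subspace containing the identity induces a well-defined unital completely positive map $f+g^*\mapsto\psi(f)+\psi(g)^*$ on the operator system $\mathcal Z_d+\mathcal Z_d^*$ (Paulsen's off-diagonal trick). Applying that ucp map entrywise to the universal positive Toeplitz matrix $T_{d+1}(\chi)$ then yields $\mathcal T\geq 0$. Without this step your argument does not close.

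On (d), your route genuinely differs from the paper's, which simply quotes Voiculescu's non-commutative Weyl--von Neumann theorem: since $\cstar(M_\chi)\cong C(S^1)$ acting on $L^2(S^1)$ contains no nonzero compact operators, every ucp map $\cstar(M_\chi)\rightarrow\B(\H)$ with $\H$ separable is a point-norm limit of compressions $V_n^*\,\cdot\,V_n$ by isometries. Your hands-on spectral discretization is plausible but leaves the hardest point unaddressed: the natural output of that construction is an isometry into an ampliation $\bigoplus W$, whereas (d) demands a single copy of $W$ on $\ell^2(\mathbb Z)$, and collapsing the multiplicity (say, by distributing frequency-localized blocks over disjoint long intervals of $\mathbb Z$) is precisely the content of the Weyl--von Neumann--Berg/Voiculescu machinery you are trying to avoid. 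Citing Voiculescu is the efficient move. (Neither you nor the paper records the easy converse (d)$\Rightarrow$(a): each tuple $(V_\varepsilon^*W^kV_\varepsilon)_k$ is Toeplitz contractive and the set of Toeplitz-contractive tuples is norm-closed.)
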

 
 \begin{proof} As mentioned above,
 there are three distinct proofs of the implication $\mbox{(a)}\Rightarrow\mbox{(c)}$. 
 
 The first proof is achieved using 
 Ando's Theorem 
 \cite{ando1970,ando2013} on truncated moment problems, which
shows that each $T_k$ is the image of $z^k\in C(S^1)$ under a ucp map $\phi:C(S^1)\rightarrow\B(\H)$. By expressing
$\phi$ in terms of its Stinespring decomposition $\phi=V^*\pi V$, for some representation $\pi:C(S^1)\rightarrow\B(\H_\pi)$
and linear isometry $V:\H\rightarrow\H_\pi$, then
the dilation is achieved by taking $\K=\H_\pi$, $U=\pi(z)$, $P=VV^*$, and considering $T$ as the operator on the subspace
$V(\H)\subset\K$ given
by $\eta\mapsto VTV^*\eta$, for $\eta\in V(\H)$.

The
second proof is via Gurvits' proof of his theorem \cite{gurvits2001,gurvits--burnam2002} on separable positive
block-Toeplitz matrices, 
under the hypothesis that $\H$ has finite dimension; in this case,  Gurvits' methods show the resulting dilating space $\K$
can be taken to have finite dimension.

The third proof is due to the
author and McBurney \cite{farenick--mcburney2023}, based on the theory of pure completely positive linear maps. The
proof in \cite{farenick--mcburney2023} requires no restriction on the dimension of $\H$; 
however, if $\H$ has finite dimension, then it is shown in 
 \cite{farenick--mcburney2023} that the dilating space $\K$ can also be taken to be finite-dimensional.
 
 To prove the implication $\mbox{(c)}\Rightarrow\mbox{(b)}$, let $(U,\dots,U^d)$ be a unitary dilation of $(T_1,\dots,T_d)$.
 Thus, $T_k\xi = P_\H U^k\xi$, for every $\xi\in H$ and $k=1,\dots,d$. If $m$ is a positive integer and $A_0,A_1,\dots A_d$
 are $m\times m$ complex matrices, then the operators
 \[
 1_\H \otimes A_0 + \sum_{k=1}^d T_k\otimes A_k \,\mbox{ and }\,
 1_\K \otimes A_0 + \sum_{k=1}^d U^k\otimes A_k
 \]
act on the Hilbert spaces $\H\otimes\mathbb C^m$ and $\K\otimes\mathbb C^m$, respectively. 
Moreover, 
\[
1_\H \otimes A_0 + \sum_{k=1}^d T_k\otimes A_k \,\mbox{ is a compression of }\,
1_\K \otimes A_0 + \sum_{k=1}^d U^k\otimes A_k
\]
to the subspace $\H\otimes\mathbb C^m$ of $\K\otimes\mathbb C^m$. Hence, 
\[
\left\| 1_\H \otimes A_0 + \sum_{k=1}^d T_k\otimes A_k \right\| \leq 
\left\| 1_\K \otimes A_0 + \sum_{k=1}^d U^k\otimes A_k \right\|.
\]
Because ${\sf U}=(U,\dots, U^d)$ is a $d$-tuple of
 pairwise commuting unitary operators, the maximal ideal space $\mathfrak M$ of the abelian C$^*$-algebra 
$\cstar({\sf U})$ is homeomorphic to $\mbox{Sp}_G({\sf U})$ under the map 
$\rho\mapsto (\rho(U),\dots,\rho(U^d))$. Hence, every operator $U^k$
is a complex-valued continuous function on $\mathfrak M$ whereby $\rho\mapsto\rho(U^k)$. 
Because the unital C$^*$-algebra $C(\mathfrak M)\otimes \M_m(\mathbb C)$ is isometrically isomorphic 
to the C$^*$-algebra of continuous functions on $\mathfrak M$ with values in $\M_m(\mathbb C)$, 
we deduce that
\[
\left\| 1_\K \otimes A_0 + \sum_{k=1}^d U^k\otimes A_k \right\|
=
\max_{\rho\in\mathfrak M} \left\| A_0+\sum_{k=1}^d \rho(U^k)  A_k \right\|.
\]
Furthermore, because $\rho(U^k)=\rho(U)^k$ and
\[
\mbox{Sp}_G({\sf U})=\left\{(\lambda,\lambda^2,\dots,\lambda^d)\,|\,\lambda\in \mbox{Sp}_G(U)=\mbox{Sp}(U)\right\}
\subseteq
\left\{(z,z^2,\dots,z^d)\,|\,z \in S^1\right\},
\]
we obtain 
 \[
 \left\| 1_\H \otimes A_0 + \sum_{k=1}^d T_k\otimes A_k\right\| \leq \max_{z\in\mathbb C,\,|z|=1}\left\| A_0 + \sum_{k=1}^d z^kA_k\right\|,
 \]
 thereby completing the proof of the implication $\mbox{(c)}\Rightarrow\mbox{(b)}$.
 
To prove the implication $\mbox{(b)}\Rightarrow\mbox{(a)}$, assume that $T_1,\dots,T_d\in\B(\H)$ are such that
 inequality (b) holds for all matrices $A_k\in\M_m(\mathbb C)$ and positive integers $m$. Let $\mathcal Z_d$
 be the finite-dimensional subspace of $C(S^1)$ spanned by the continuous functions $\chi_k(z)=z^k$, for $k=0,1,\dots, d$, for $z\in S^1$. 
 By the linear independence of $\chi_0,\dots,\chi_d$, there is a unital linear map $\psi:\mathcal Z_d\rightarrow\B(\H)$ in which $\phi(\chi_k)=T_k$, 
 for $k=1,\dots, d$. Inequality (b) asserts that $\psi$ is completely contractive; hence, the map
 $\phi:\mathcal Z_d+\mathcal Z_d^*\rightarrow \B(\H)$ given by $\phi(f+g^*)=\psi(f)+\psi(g)^*$ is unital and completely positive, and has the
 property that $\phi(\chi_k)=T_k$ and $\phi(\chi_k^*)=T_k^*$, for $k=1,\dots, d$. Because $\chi_k=\chi_1^k$, for $k=1,\dots, d$, the tuple 
 $(\chi_1,\dots,\chi_d)$ of unitaries is Toeplitz contractive; hence, 
 so is the $d$-tuple $(\phi(\chi_1),\dots,\phi(\chi_d))=(T_1,\dots, T_d)$,
 by the complete positivity of the linear map $\phi$.
 
To complete the proof, assume that $\H$ is a separable Hilbert space.
The C$^*$-algebra generated by the multiplication operator 
$M_\chi$ is $\{M_\varphi\,|\,\varphi\in C(S^1)\}$, which is a faithful $*$-representation
of $C(S^1)$, and $\cstar(M_\chi)$ contains
no compact operators other than $0$. 
Thus, for any unital completely positive linear map $\phi:\cstar(M_\chi)\rightarrow\B(\H)$, 
there exists a sequence of isometric operators $\tilde V_n:\H\rightarrow L^2(S^1)$ such that
\begin{equation}\label{e:v1}
\lim_{n\rightarrow\infty}\|\phi(M_\varphi)-\tilde V_n^*M_\varphi\tilde V_n\|=0,
\end{equation}
for every $\varphi\in C(S^1)$ \cite[Theorem II.5.3]{Davidson-book}, \cite{voiculescu1976}. 
In particular, assuming ${\sf T}$ is Toeplitz contractive,  the previous paragraph shows the existence of a unital completely positive linear map
$\phi:\cstar(M_\chi)\rightarrow\B(\H)$ such that $T_k=\phi(M_{\chi_k})$, for $k=1,\dots, d$. Hence,
if $\tilde U:\ell^2(\mathbb Z)\rightarrow L^2(S^1)$
is the unitary operator for which $W=\tilde U^*M_\chi \tilde U$, where $W$ is the bilateral shift, then
(\ref{e:v1}) becomes
\begin{equation}\label{e:v2}
\lim_{n\rightarrow\infty}\|T_k-V_n^*W^kV_n\|=0,
\end{equation}
for $k=1,\dots,d$, where each $V_n:\H\rightarrow\ell^2(\mathbb Z)$ is the isometry given by $V_n=\tilde U\tilde V_n$.
 \end{proof} 
 
Items (b) and (d) in Theorem \ref{main result} above are natural outcomes of similar ideas 
in \cite[Theorem 4.1]{farenick--floricel--plosker2013} and 
\cite[Theorem 5.2]{davisdon--dor-on--shalit--solel2017}, respectively. 
The assertion about Toeplitz contractive $d$-tuples acting on a finite-dimensional Hilbert space 
having a unitary power dilation that also acts on a space of finite dimension can be proved entirely algebraically,
as in \cite{gurvits2001}, or by means of convex analysis, as in \cite{farenick--mcburney2023}. The latter approach is not unlike the
recent work of Hartz and Lupini \cite{hartz--lupini2021}, where matrix convexity is shown to play a prominent role in the
dilation theory for operators acting on finite-dimensional Hilbert spaces.

It is well know that the unitary dilation $U$ of a contraction $T$ explicitly constructed by Halmos in \cite{halmos1950} 
does not have the property that $U^n$ dilates $T^n$, for every $n\in\mathbb N$. In this respect, assertion (c) of
Theorem \ref{main result} may be viewed as ``Halmos dilation theorem in several variables.''

\begin{corollary}\label{fd1} If $T$ is a contraction acting on a Hilbert space $\H$, then
there exists a Hilbert space $\K$ containing $\H$ as a subspace and a unitary operator $U\in\B(\K)$
such that
\begin{equation}\label{e:fd sz-nagy}
T^k=P_\H U^k_{\vert\H},
\end{equation}
for all $k=1,\dots, d$.

Furthermore, if $\H$ has finite dimension, then the dilating Hilbert space $\K$ can be chosen to have finite dimension.
\end{corollary}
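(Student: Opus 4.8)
The plan is to obtain this as an immediate consequence of Theorem \ref{main result} together with Example \ref{ex:power contraction}. First I would observe that if $T$ is a contraction on $\H$, then Example \ref{ex:power contraction} tells us precisely that the $d$-tuple ${\sf T}=(T,T^2,\dots,T^d)$ is Toeplitz-contractive. This is the only input needed to activate the main theorem: condition (a) of Theorem \ref{main result} holds for this particular $d$-tuple.

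Next I would invoke the implication $\mbox{(a)}\Rightarrow\mbox{(c)}$ of Theorem \ref{main result}. Applied to ${\sf T}=(T,T^2,\dots,T^d)$, it yields a Hilbert space $\K$ containing $\H$ as a subspace and a unitary $U\in\B(\K)$ such that $U^k$ is a dilation of the $k$-th entry of ${\sf T}$, for each $k=1,\dots,d$. Since the $k$-th entry of ${\sf T}$ is $T^k$, the dilation relation reads
\[
T^k = P_\H\, U^k_{\vert\H},
\]
for all $k=1,\dots,d$, which is exactly equation \eqref{e:fd sz-nagy}. For the finite-dimensional refinement, I would appeal to the final clause of Theorem \ref{main result}: if $\H$ has finite dimension and the equivalent conditions hold, then $\K$ may be chosen finite-dimensional. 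Since we have verified condition (a) for ${\sf T}$, this clause applies directly and gives the desired finite-dimensional dilating space.

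There is essentially no obstacle here, as the corollary is a specialisation of the main theorem to the prototype Toeplitz-contractive tuple of Example \ref{ex:power contraction}; the only point requiring care is the bookkeeping identification of the $k$-th coordinate of ${\sf T}$ with $T^k$, so that assertion (c) of the theorem literally becomes the power-dilation statement \eqref{e:fd sz-nagy}. One could also remark that this recovers, via the unitary power dilation, the classical consequence that $\|p(T)\|\le\sup_{|z|=1}|p(z)|$ for polynomials $p$ of degree at most $d$ with no constant-term constraint, i.e. a finite-degree von Neumann inequality, but that is a side observation rather than part of the proof.
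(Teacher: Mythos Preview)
Your proposal is correct and follows essentially the same approach as the paper: invoke Example \ref{ex:power contraction} to see that ${\sf T}=(T,T^2,\dots,T^d)$ is Toeplitz-contractive, then apply the implication $\mbox{(a)}\Rightarrow\mbox{(c)}$ and the finite-dimensional clause of Theorem \ref{main result}. The paper's proof is simply a terser version of what you wrote, and your additional remark on the finite-degree von Neumann inequality is a harmless aside.
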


\begin{proof} Example \ref{ex:power contraction} shows the $d$-tuple ${\sf T}=(T,T^2,\dots,T^d)$ is Toeplitz contractive. 
Thus, by Theorem \ref{main result}, ${\sf T}$ has a dilation to a power unitary
$d$-tuple ${\sf U}=(U,\dots, U^d)$, and the dilating space $\K$ has finite dimension, if $\H$ has finite dimension.
\end{proof}

Corollary \ref{fd1} is a truncated version of the 
famous dilation theorem of Sz.-Nagy \cite{Sz-Nagy-book}, which asserts every contraction $T$ admits a unitary dilation $U$
for which (\ref{e:fd sz-nagy}) holds for all $k\in\mathbb N$, not just for $k=1,\dots,d$. The dilating Hilbert space $\K$ in Sz.-Nagy's theorem
is generally infinite-dimensional, even if $\H$ has finite dimension---for example, this is the case if $T$ is a non-unitary contraction.
Therefore, the
interest in Corollary \ref{fd1} lies in the proof, independent of Sz.-Nagy's theorem and of the similar result of Egerv\'ary \cite{egervary1954}, 
of the existence of a unitary power dilation, up to $d$ powers,
and the assertion that $\K$ can be chosen to have finite dimension, if $\H$ has finite dimension.

A similar truncated version of Berger's ``strange dilation theorem'' \cite{berger1965} holds for numerical
contractions on finite-dimensional spaces. However, the only new
information imparted in Corollary \ref{fd2} below is 
the assertion concerning the finite-dimensionality of the dilating space, if the original space has
finite dimension. The existence of the dilation is not given by a new argument; in this regard, Corollary \ref{fd2}  is 
in the same vein as \cite[Theorem 7.1]{davisdon--dor-on--shalit--solel2017}, which shows that once a normal dilation is known to exist, a
normal dilation to a finite-dimensional space can be found.

\begin{corollary}\label{fd2} If $S$ is an operator acting on a finite-dimensional Hilbert space $\H$ such that $w(S)\leq 1$, then
there exists a finite-dimensional Hilbert space $\K$ containing $\H$ as a subspace and a unitary operator $U\in\B(\K)$
such that
\begin{equation}\label{e:fd berger}
S^k=2P_\H U^k_{\vert\H},
\end{equation}
for all $k=1,\dots, d$.
\end{corollary}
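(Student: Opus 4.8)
The plan is to reduce Berger's numerical-radius dilation to the power-unitary dilation already established in Theorem \ref{main result} via a scaling argument, just as Corollary \ref{fd1} reduced the Sz.-Nagy-type statement to it. The starting observation is the classical fact, due to Berger and also recoverable from Ando's work, that an operator $S$ on $\H$ satisfies $w(S)\le 1$ if and only if the single operator $T=\tfrac12 S$ has the property that the $d$-tuple $(T,T^2,\dots,T^d)$ is Toeplitz-contractive for every $d$; indeed, the relevant positivity condition is exactly the one expressed by the positive operator-matrix built from powers of $T$, which for $T=\tfrac12 S$ is equivalent to $\operatorname{Re}\big((1-z)^{-1}$-type$\big)$ estimates encoding $w(S)\le 1$. (More concretely, one invokes that $w(S)\le 1$ iff $\operatorname{Re}(1_\H - zS)^{-1}\ge \tfrac12 1_\H$ for all $|z|<1$, which upon expanding the Neumann series and truncating gives positivity of the Toeplitz matrix with $(i,j)$-entry $\tfrac12 S^{j-i}$ for $j>i$, and $1_\H$ on the diagonal — exactly the condition that $(\tfrac12 S,\tfrac14 S^2,\dots,\tfrac1{2^?}S^d)$ fits the Definition \ref{tc def} pattern.)

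The key steps, in order, are: First, set $T=\tfrac12 S$ and verify that $w(S)\le 1$ forces the Toeplitz operator-matrix of Definition \ref{tc def} associated with the $d$-tuple $(T, T^2, \dots, T^d)$ — equivalently, associated with $(\tfrac12 S, \tfrac12 S^2, \dots, \tfrac12 S^d)$ after reconciling the power normalisation — to be positive; here I would lean on the characterisation in Example \ref{ex:power contraction} combined with the standard Berger criterion, noting that the "power contraction" example generalises: if $w(S)\le1$ then the Toeplitz matrix with $1_\H$ on the diagonal and $\tfrac12 S^k$ on the $k$-th subdiagonal (and its adjoint above) is positive. Second, apply Theorem \ref{main result}, implication (a)$\Rightarrow$(c), to this Toeplitz-contractive $d$-tuple to obtain a Hilbert space $\K\supseteq\H$ and a unitary $U\in\B(\K)$ with $U^k$ dilating the $k$-th entry of the tuple, i.e. $\tfrac12 S^k = P_\H U^k_{\vert\H}$, which rearranges to the desired $S^k = 2P_\H U^k_{\vert\H}$ for $k=1,\dots,d$. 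Third, invoke the final clause of Theorem \ref{main result}: since $\H$ is assumed finite-dimensional, the dilating space $\K$ may be taken finite-dimensional.

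The main obstacle — and the only real content beyond bookkeeping — is the first step: pinning down precisely which scaled power tuple is Toeplitz-contractive under the hypothesis $w(S)\le 1$, and in particular sorting out whether the correct tuple is $(\tfrac12 S, \tfrac12 S^2,\dots,\tfrac12 S^d)$ or $(\tfrac12 S, \tfrac14 S^2,\dots)$, because the Toeplitz pattern in Definition \ref{tc def} repeats $T_1$ on every subdiagonal rather than placing $T_k$ once. The cleanest route is probably to observe directly that $w(S)\le 1$ is equivalent to positivity of the operator Fejér–Riesz matrix $\big[\,c_{j-i}\,\big]$ with $c_0 = 1_\H$, $c_k = \tfrac12 S^k$ for $k\ge 1$, $c_{-k}=\tfrac12 (S^*)^k$; this is the operator-valued form of Berger's theorem and matches the Toeplitz matrix of Definition \ref{tc def} exactly when we declare the $d$-tuple to be $(\tfrac12 S,\dots,\tfrac12 S^d)$ with $T_1 = \tfrac12 S$ — but then consistency of the Toeplitz structure ($T_k = T_1^k$?) fails unless one works instead with the tuple $\big(\tfrac12 S, (\tfrac12 S)^2,\dots\big)$ and absorbs the discrepancy. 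I would resolve this by citing the operator Berger dilation criterion in the form that makes the Toeplitz matrix of $(\tfrac12 S,\dots)$ manifestly positive, note that this is genuinely the hypothesis of Theorem \ref{main result}(a), and then let (a)$\Rightarrow$(c) plus the finite-dimensionality clause finish the argument; the factor $2$ in \eqref{e:fd berger} is then simply the reciprocal of the scaling constant $\tfrac12$.
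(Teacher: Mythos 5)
Your reduction is exactly the intended one---the paper in fact supplies no written proof of Corollary \ref{fd2}, remarking only that the existence of the dilation is classical (Berger) and that the sole new content is the finite-dimensionality of $\K$, which comes from the last clause of Theorem \ref{main result}. The one step you leave hanging, however, has a clean resolution that dissolves your ``main obstacle.'' Definition \ref{tc def} places $T_k$ on the $k$-th subdiagonal and imposes no algebraic relation whatsoever among the entries of the tuple---there is no requirement that $T_k=T_1^k$, and Example \ref{ex:power contraction} is merely one family of examples. The tuple you want is ${\sf T}=(\tfrac12 S,\tfrac12 S^2,\dots,\tfrac12 S^d)$, i.e.\ $T_k=\tfrac12 S^k$ (not $(\tfrac12 S)^k$), and its Toeplitz contractivity is best obtained not from a Herglotz/Fej\'er--Riesz computation but from two ingredients already on the page: Berger's theorem produces some unitary $U_0$ on some $\K_0\supseteq\H$ with $S^k=2P_\H (U_0^k)_{\vert\H}$ for all $k\geq 1$, so that $\tfrac12 S^k=\phi(U_0^k)$ for the ucp compression map $\phi=P_\H(\cdot)_{\vert\H}$, and Corollary \ref{ex:prototype} then says $(\phi(U_0),\dots,\phi(U_0^d))={\sf T}$ is Toeplitz contractive. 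Now Theorem \ref{main result}, implication (a)$\Rightarrow$(c) together with its finite-dimensionality clause, yields a finite-dimensional $\K$ and a unitary $U$ with $\tfrac12 S^k=P_\H (U^k)_{\vert\H}$ for $k=1,\dots,d$, which is \eqref{e:fd berger}. With that substitution in your first step, your argument is complete and coincides with the paper's.
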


\section{The Toeplitz Modulus and Numerical Radius on $\B(\H)^d$}

\begin{definition}\label{d:Tm} The \emph{Toeplitz modulus} of ${\sf T}=(T_1,\dots,T_d)\in\B(\H)^d$ is the nonnegative real
number $\rho({\sf T})$ defined by
\begin{equation}\label{e:Tm}
\rho({\sf T})= \inf\left\{ \alpha\in\mathbb R_+\,|\,
\left[ \begin{array}{ccccc} \alpha 1_\H & T_1^* & T_2^* & \dots & T_d^* \\
                                                              T_1 & \alpha 1_\H & T_1^* & \ddots & \vdots \\
                                                              T_2 & \ddots & \ddots & \ddots &  T_2^* \\
                                                              \vdots & \ddots & \ddots & \ddots & T_1^* \\
                                                              T_d & \dots & T_2 & T_1 & \alpha 1_\H 
\end{array}\right] \mbox{ is positive}\right\}.
\end{equation}
\end{definition}

In light of Definition \ref{d:Tm}, an operator $d$-tuple ${\sf T}$ is Toeplitz contractive if and only if $\rho({\sf T)}\leq 1$.

Some basic properties of the Toeplitz modulus are presented in the next few results. To facilitate their
proofs, the following notation shall be used.

For any ${\sf A}=(A_1,\dots, A_d)\in\B(\H)^d$ and $\alpha\in\mathbb R_+$, 
denote $\mathcal A_\alpha$ by
\begin{equation}\label{e:Aalpha}
\mathcal A_\alpha =
\left[ \begin{array}{ccccc} \alpha 1_\H & A_1^* & A_2^* & \dots & A_d^* \\
                                                              A_1 & \alpha 1_\H & A_1^* & \ddots & \vdots \\
                                                              A_2 & \ddots & \ddots & \ddots &  A_2^* \\
                                                              \vdots & \ddots & \ddots & \ddots & A_1^* \\
                                                              A_d & \dots & A_2 & A_1 & \alpha 1_\H 
\end{array}\right].
\end{equation}

Thus, the defining condition (\ref{d:Tm}) for the Toeplitz-modulus of ${\sf T}$ becomes
\begin{equation}\label{d:Tm2}
\rho({\sf T})= \inf\left\{ \alpha\in\mathbb R_+\,|\,\mathcal T_\alpha\mbox{ is a positive operator}\right\}.
\end{equation}

\begin{proposition}\label{prop1}
Suppose ${\sf T}=(T_1,\dots,T_d)\in\B(\H)^d$ and define ${\sf T}^*=(T_1^*,\dots,T_d^*)$.
\begin{enumerate}
\item If $d=1$, then $\rho({\sf T})=\|T_1\|$.
\item $\rho({\sf T}^*)=\rho({\sf T})$.
\item If $1\leq q\leq d$ and $1\leq i_1< \cdots <i_q\leq d$, then
\[
\rho\left( (T_{i_1}, \dots, T_{i_q})\right) \leq \rho({\sf T}),
\]
and $\|T_k\|\leq \rho({\sf T})$, for every $k=1,\dots, d$.
\item If $\phi:\B(\H)\rightarrow\B(\K)$ is a ucp map, the $\rho\left(\phi({\sf T}) \right) \leq \rho({\sf T})$.
\item $\rho({\sf T})=0$ if and only if ${\sf T}=0$.
\end{enumerate}
\end{proposition}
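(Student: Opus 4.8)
The plan is to prove each of the five items by reducing to facts about positive operator matrices and completely positive maps. The unifying tool is the observation that the matrix $\mathcal T_\alpha$ in \eqref{e:Aalpha}, when $\alpha > 0$, is positive if and only if $\alpha^{-1}\mathcal T_\alpha = \mathcal T_1(\alpha^{-1}{\sf T})$ is positive, so $\rho$ scales correctly and all questions reduce to the $\alpha=1$ (Toeplitz-contractive) case via homogeneity, which I would record as a preliminary remark: $\rho(\lambda {\sf T}) = \lambda\,\rho({\sf T})$ for $\lambda \ge 0$, and $\rho({\sf T}) \le \alpha$ iff $\mathcal T_\alpha \ge 0$ (the infimum is attained by norm-closedness of the positive cone).

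For \textbf{(1)}, when $d=1$ the matrix $\mathcal T_\alpha$ is exactly $\left[\begin{smallmatrix}\alpha 1_\H & T_1^* \\ T_1 & \alpha 1_\H\end{smallmatrix}\right]$, and formula \eqref{e:n1} gives $\inf\{\alpha : \mathcal T_\alpha \ge 0\} = \|T_1\|$ directly. For \textbf{(2)}, conjugating $\mathcal T_\alpha$ for ${\sf T}$ by the self-adjoint unitary permutation that reverses the order of the $d+1$ summands sends it to $\mathcal T_\alpha$ for ${\sf T}^*$ (the reversal turns the lower-triangular $T_k$-pattern into the $T_k^*$-pattern); unitary conjugation preserves positivity, so the two defining sets of $\alpha$ coincide. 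For \textbf{(4)}, apply the ampliation $\phi^{[d+1]}$ to $\mathcal T_\alpha$: since $\phi$ is completely positive it sends positive operator matrices to positive operator matrices, and since $\phi$ is unital $\phi(\alpha 1_\H) = \alpha 1_\K$; the image is precisely $\mathcal T_\alpha$ for $\phi({\sf T})$, so $\mathcal T_\alpha \ge 0$ for ${\sf T}$ forces the same for $\phi({\sf T})$, whence $\rho(\phi({\sf T})) \le \rho({\sf T})$. This same compression-to-a-ucp-image idea handles the two assertions in \textbf{(3)}: the coordinate projection $\B(\H) \to \B(\H)$, $X \mapsto X$ composed with the embedding into a sub-tuple is realised by a suitable compression of $\mathcal T_\alpha$ to a principal submatrix, and one checks that deleting the rows and columns indexed outside a consecutive-gap pattern yields exactly $\mathcal T_\alpha$ for the sub-tuple (principal submatrices of positive matrices are positive); the bound $\|T_k\| \le \rho({\sf T})$ then follows from (1) applied to the length-one sub-tuple $(T_k)$, or directly by compressing to a suitable $2\times 2$ principal submatrix of $\mathcal T_\alpha$ that contains $T_k$ off-diagonal.

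For \textbf{(5)}, one direction is trivial: if ${\sf T}=0$ then $\mathcal T_\alpha = \alpha 1_\K$ is positive for every $\alpha \ge 0$, so $\rho({\sf T})=0$. Conversely, if $\rho({\sf T})=0$ then $\mathcal T_0 \ge 0$ (infimum attained), but $\mathcal T_0$ has zero diagonal blocks; a positive operator matrix with a zero $(i,i)$ block has vanishing $i$-th row and column (if $\langle X e, e\rangle = 0$ for $X \ge 0$ then $Xe = 0$), which forces every $T_k = 0$. Alternatively, from part (3), $\|T_k\| \le \rho({\sf T}) = 0$ for all $k$.

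\textbf{The main obstacle} I anticipate is bookkeeping in part (3): one must verify carefully that extracting the sub-tuple $(T_{i_1},\dots,T_{i_q})$ for an arbitrary increasing index set corresponds to an honest principal submatrix of the Toeplitz matrix $\mathcal T_\alpha$ whose entries match the Toeplitz pattern of the sub-tuple. This is not automatic — the Toeplitz structure means that the $(r,s)$ entry of $\mathcal T_\alpha$ depends only on $r-s$, so deleting a set of consecutive indices generally does \emph{not} give a submatrix with the right shifted pattern. The honest move is instead the ucp-map argument: realise $(T_{i_1},\dots,T_{i_q})$ as $\psi({\sf T})$ is not possible directly, so one should argue via \emph{dilations} — if ${\sf T}$ has a unitary power dilation $(U,\dots,U^d)$ by Theorem~\ref{main result}, then $(U^{i_1},\dots,U^{i_q})$ dilates $(T_{i_1},\dots,T_{i_q})$, and $(U^{i_1},\dots,U^{i_q})$ is Toeplitz-contractive (indeed $\rho \le \alpha$-scaled) because it is a compression of powers of the single unitary $U$, i.e. it arises as $\phi({\sf V})$ for the ucp map $\phi = \mathrm{Ad}\,P_\H$ applied to $(V^{i_1},\dots)$ where ${\sf V}$ is power-unitary — one checks a power-unitary tuple has $\rho$ equal to its scaled value and then any sub-collection of its powers is again of this form up to a reindexing. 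I would expect to phrase (3) cleanly by first proving $\rho(\lambda^{-1}{\sf T}) \le 1$ iff the dilation exists, reducing to the already-established Theorem~\ref{main result}, and then using that sub-tuples of power-unitary tuples (after rescaling) are Toeplitz-contractive by a direct check on $T_n(z)$-type factorisations.
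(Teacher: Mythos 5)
Parts (1), (2), (4), and (5) of your argument are correct and essentially identical to the paper's: the reversal permutation for (2), the ampliation $\phi^{[d+1]}$ for (4), and the reduction of (5) to $\|T_k\|\leq\rho({\sf T})$ all match the paper, and your preliminary remark on homogeneity and attainment of the infimum is fine.

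The genuine gap is in part (3), where, to your credit, you correctly identified the defect in the principal-submatrix route (which is in fact exactly the argument the paper gives): a principal submatrix of $\mathcal T_\alpha$ indexed by $\{j_0<\cdots<j_q\}$ has $(r,s)$ entry $T_{j_r-j_s}$, and this reproduces the Toeplitz matrix of $(T_{i_1},\dots,T_{i_q})$ only if $j_r-j_s=i_{r-s}$ for all $r>s$, which forces $i_k=k\,i_1$. However, your proposed repair via Theorem \ref{main result} fails at its key step: for a unitary $U$, the tuple $(U^{i_1},\dots,U^{i_q})$ is generally \emph{not} Toeplitz contractive, and it is not power-unitary after any reindexing. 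Applying a character of $\cstar(U)$ at a spectral point $z$ reduces the question to positivity of the scalar Toeplitz matrix with first column $(1,z^{i_1},\dots,z^{i_q})$, and a positive semidefinite scalar Toeplitz matrix with unit diagonal and $|c_1|=1$ necessarily satisfies $c_k=c_1^k$; so positivity fails unless $z^{i_k}=z^{ki_1}$ throughout the spectrum. In fact the first assertion of (3) is false as stated: take $\H=\mathbb C$, $U=-1$, and ${\sf T}=(U,U^2,U^3)=(-1,1,-1)$. Then $\mathcal T_1=vv^*$ for $v=(1,-1,1,-1)$, so $\rho({\sf T})=1$, while the sub-tuple $(T_1,T_3)=(-1,-1)$ has $3\times3$ Toeplitz matrix $\Lambda$ with diagonal $\alpha$ and all off-diagonal entries $-1$, which has eigenvalue $\alpha-2$, whence $\rho\left((T_1,T_3)\right)=2$. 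No argument---yours or the paper's---can close this gap; the claim must be restricted to index sets in arithmetic progression $i_k=km$ (in particular initial segments and singletons), for which the principal-submatrix argument is valid and which is all the rest of the paper actually uses (e.g.\ $\|T_k\|\leq\rho({\sf T})$, and hence part (5)).
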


\begin{proof} The first assertion was noted earlier in equation (\ref{e:n1}).

For (2), for each $\alpha\in\mathbb R_+$, the Toeplitz matrices of operators $\mathcal T_\alpha^*$ and $\mathcal T_\alpha$ are
unitarily equivalent (via the selfadjoint permutation of $\displaystyle\bigoplus_1^d\H$ that repositions the direct summands 
of $\displaystyle\bigoplus_1^d\H$ in reverse order).
Hence, $\mathcal T_\alpha^*$ is positive if and only if $\mathcal T_\alpha$ is positive.

To prove (3), assume $1\leq i_1<i_2<\cdots< i_q\leq d$ and let ${\sf T}(\mathcal I)=(T_{i_1},T_{i_2},\dots,T_{i_q})$.
Thus, for each $\alpha\in\mathbb R_+$, the operator matrix $\mathcal T(\mathcal I)_\alpha$ is a $q\times q$
principal submatrix of $\mathcal T_\alpha$. Thus, the positivity of $\mathcal T_\alpha$ implies the positivity of
$\mathcal T(\mathcal I)_\alpha$. Hence, for any $\alpha\in\mathbb R_+$ for which $\mathcal T_\alpha$ is positive, we have
$\rho\left( {\sf T}({\mathcal I})\right) \leq \alpha$, and so $\rho\left( {\sf T}({\mathcal I})\right) \leq \rho({\sf T})$.

The proof (4) is similar to the proof of (3): if $\alpha\in\mathbb R_+$ is such that $\mathcal T_\alpha$ is positive, then 
$\Phi(\mathcal T)_\alpha$ is also positive, where $\Phi(\mathcal T)_\alpha$ is the Toeplitz matrix of operators obtained by 
applying $\phi$ to each entry of $\mathcal T_\alpha$.

Select $k\in\{1,\dots,d\}$. Using (3) with $\mathcal I$ determined by the singleton set $\{k\}$, we have $\|T_k\|\leq \rho({\sf T})$.
Hence, $\rho({\sf T})=0$ if and only if ${\sf T}=0$, thereby proving (5).
\end{proof}

An important, interesting property of the Toeplitz modulus is that it is a convex function $\B(\H)^d\rightarrow\mathbb R$.

\begin{proposition}\label{Tm is convex}
If ${\sf A},{\sf S}, {\sf T}\in \B(\H)^d$ and $a,s,t\in\mathbb R_+$, then
\[
\rho(a{\sf A})=a\rho({\sf A}) \mbox{ and }
\rho(s{\sf S}+t{\sf T}) \leq s\rho({\sf S})+t\rho({\sf T}).
\]
\end{proposition}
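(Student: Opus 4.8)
The plan is to reduce both assertions to the convexity of the cone of positive operators, once we know that the infimum defining $\rho$ in \eqref{e:Tm} is actually attained. So the first step I would carry out is the preliminary observation that, for every ${\sf A}=(A_1,\dots,A_d)\in\B(\H)^d$, the feasible set $F({\sf A})=\{\alpha\in\mathbb R_+\,:\,\mathcal A_\alpha\mbox{ is positive}\}$ is exactly the closed half-line $[\rho({\sf A}),\infty)$. Writing $\mathcal A_\alpha=\alpha 1_\K+\mathcal A_0$ with $\mathcal A_0$ a bounded selfadjoint operator, one sees that $F({\sf A})$ is nonempty (it contains every $\alpha\geq\|\mathcal A_0\|$), that it is upward closed (if $\mathcal A_\alpha\geq0$ and $\beta\geq\alpha$, then $\mathcal A_\beta=\mathcal A_\alpha+(\beta-\alpha)1_\K\geq0$), and that it is closed (the map $\alpha\mapsto\mathcal A_\alpha$ is norm-continuous and the positive cone is norm-closed). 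In particular $\mathcal A_{\rho({\sf A})}\geq0$.

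Next I would establish positive homogeneity. For $a=0$ the identity $\rho(0\cdot{\sf A})=0=0\cdot\rho({\sf A})$ is Proposition \ref{prop1}(5). For $a>0$, the point is the scaling identity
\[
(a{\sf A})_\alpha=a\,\mathcal A_{\alpha/a},
\]
valid for all $\alpha\in\mathbb R_+$, where $(a{\sf A})_\alpha$ denotes the matrix \eqref{e:Aalpha} built from the tuple $a{\sf A}=(aA_1,\dots,aA_d)$ (using that $a$ is real, so $(aA_k)^*=aA_k^*$). Since multiplication by the positive scalar $a$ preserves positivity, $(a{\sf A})_\alpha\geq0$ if and only if $\mathcal A_{\alpha/a}\geq0$, i.e.\ if and only if $\alpha/a\geq\rho({\sf A})$; taking the infimum over such $\alpha$ yields $\rho(a{\sf A})=a\rho({\sf A})$.

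For the inequality I would first prove $\rho({\sf S}+{\sf T})\leq\rho({\sf S})+\rho({\sf T})$, where ${\sf S}+{\sf T}=(S_1+T_1,\dots,S_d+T_d)$. By the preliminary observation, $\mathcal S_{\rho({\sf S})}\geq0$ and $\mathcal T_{\rho({\sf T})}\geq0$, hence their sum is positive; but adding entrywise shows that $\mathcal S_{\rho({\sf S})}+\mathcal T_{\rho({\sf T})}$ is precisely the matrix \eqref{e:Aalpha} associated to the tuple ${\sf S}+{\sf T}$ with parameter $\rho({\sf S})+\rho({\sf T})$. Therefore $\rho({\sf S}+{\sf T})\leq\rho({\sf S})+\rho({\sf T})$. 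Combining this with the homogeneity just established, for any $s,t\in\mathbb R_+$,
\[
\rho(s{\sf S}+t{\sf T})\leq\rho(s{\sf S})+\rho(t{\sf T})=s\rho({\sf S})+t\rho({\sf T}),
\]
which is the asserted inequality; convexity of $\rho$ on $\B(\H)^d$ is then the special case $s+t=1$.

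I do not expect a genuine obstacle here: the whole argument is the observation that the assignment $({\sf A},\alpha)\mapsto\mathcal A_\alpha$ respects scalar multiplication and addition in the obvious way, together with the fact that the positive operators form a convex cone. The only place demanding a little care is the preliminary step — that the infimum in \eqref{e:Tm} is attained — since it is what legitimises adding the ``optimal'' Toeplitz matrices $\mathcal S_{\rho({\sf S})}$ and $\mathcal T_{\rho({\sf T})}$ rather than passing to a limit of near-optimal ones; but as indicated above this is immediate from continuity and closedness of the positive cone.
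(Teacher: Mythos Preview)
Your proof is correct and follows essentially the same approach as the paper: both use the scaling identity $(a{\sf A})_\alpha=a\,\mathcal A_{\alpha/a}$ for homogeneity and the additive identity $\mathcal S_\beta+\mathcal T_\gamma=(\mathcal{S+T})_{\beta+\gamma}$ for subadditivity, then combine the two. The only cosmetic difference is that you first prove the infimum in \eqref{e:Tm} is attained and then add the optimal matrices $\mathcal S_{\rho({\sf S})}$ and $\mathcal T_{\rho({\sf T})}$, whereas the paper simply takes arbitrary feasible $\beta,\gamma$, obtains $\rho({\sf S}+{\sf T})\leq\beta+\gamma$, and passes to the infimum---so your preliminary step, while correct, is not strictly needed.
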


\begin{proof}
The equation $\rho(a{\sf A})=a\rho({\sf A})$ holds trivially if $a=0$. If $a>0$, then 
\[
\left[ \begin{array}{ccccc} \alpha 1_\H & aA_1^* & aA_2^* & \dots &a A_d^* \\
                                                              aA_1 & \alpha 1_\H & aA_1^* & \ddots & \vdots \\
                                                              aA_2 & \ddots & \ddots & \ddots &  aA_2^* \\
                                                              \vdots & \ddots & \ddots & \ddots & aA_1^* \\
                                                             a A_d & \dots & aA_2 & aA_1 & \alpha 1_\H 
\end{array}\right]
=
a
\left[ \begin{array}{ccccc} \frac{\alpha}{a} 1_\H & A_1^* & A_2^* & \dots & A_d^* \\
                                                              A_1 & \frac{\alpha}{a} 1_\H & A_1^* & \ddots & \vdots \\
                                                              A_2 & \ddots & \ddots & \ddots &  A_2^* \\
                                                              \vdots & \ddots & \ddots & \ddots & A_1^* \\
                                                              A_d & \dots & A_2 & A_1 & \frac{\alpha}{a} 1_\H 
\end{array}\right]=a \mathcal A_{\frac{\alpha}{a}}.
\]
Because the matrix on the left is positive if and only if the matrix $\mathcal A_{\frac{\alpha}{a}}$ is positive, we have
\[
\rho(a {\sf A}) = a \cdot \inf\left\{\frac{\alpha}{a}\,| \mathcal A_{\frac{\alpha}{a}} \mbox{ is positive}\right\} = a\rho({\sf A}).
\]

To complete the proof, it is sufficient to show that $\rho$ is subadditive, as 
$\rho(a{\sf A})=a\rho({\sf A})$ for every $a\in\mathbb R_+$.
To this end, select ${\sf S}, {\sf T}\in \B(\H)^d$ and suppose $\beta,\gamma\in\mathbb R_+$ are such that
$\mathcal S_\beta$ and $\mathcal T_\gamma$ are positive operators. Thus, the 
operator $\mathcal S_\beta+\mathcal T_\gamma$ is also positive.
Let $\alpha=\beta+\gamma$. Then
by the defining condition (\ref{e:Tm}), 
\[
\rho({\sf S}+{\sf T}) \leq \alpha = \beta+\gamma.
\]
As the inequality above is true for any $\beta,\gamma\in\mathbb R_+$
for which $\mathcal S_\beta$ and $\mathcal T_\gamma$ are positive operators, then passing to the infima of all such $\beta$
and $\gamma$ leads to
\[
\rho({\sf S}+{\sf T}) \leq \rho({\sf S})+\rho({\sf T}),
\]
which shows that $\rho$ is subadditive.
\end{proof}

One drawback of the Toeplitz modulus is that it fails to be a norm, or even a metric, if $d>1$, as the following example 
shows. 

\begin{example}[Asymmetry]\label{ex:nsym} If $\H=\mathbb C^2$ and $\sigma=\left[ \begin{array}{cc} 0&1 \\ 1&0 \end{array}\right]$, then the pair
$(\sigma, 1_2)$ is Toeplitz contractive, but $(-\sigma,-1_2)$ is not. Moreover, 
\[
\rho\left((\sigma, 1_2)\right)=1<\rho\left(-(\sigma, 1_2)\right).
\]
\end{example}

\begin{proof} Because $\sigma$ is a selfadjoint unitary, 
the pair $(\sigma, 1_2)$ has the form $(U,U^2)$, which is Toeplitz-contractive. Hence,
\[
\left[ \begin{array}{ccc} 1_2 & \sigma & 1_2 \\ \sigma & 1_2 & \sigma \\ 1_2 & \sigma & 1_2 \end{array}\right]
\]
is a positive operator. 
On the other hand, the block-Toeplitz matrix
\[
\left[ \begin{array}{ccc} 1_2 & -\sigma & -1_2 \\ -\sigma & 1_2 & -\sigma \\ -1_2 & -\sigma & 1_2 \end{array}\right]
\]
has an eigenvalue of $-1$. Thus, $(-\sigma,-1_2)$ is not a Toeplitz contractive pair, implying its Toeplitz-modulus
exceeds $1$.

To show $\rho\left( (\sigma,1_2)\right)=1$, write ${\sf T}=(\sigma,1_2)$ and
suppose $\alpha\in\mathbb R_+$ is such that the matrix $\mathcal T_\alpha$ is positive. 
Let $\xi\in\mathbb C^2$ be a unit (eigen)vector
satisfying $\sigma\xi=\xi$, and let $\phi$ denotes the vector state $X\mapsto\langle X\xi,\xi\rangle$. Applying the ampliation
$\phi^{[3]}$ to the positive matrix $\mathcal T_\alpha$ yields the positive matrix
\[
\left[ \begin{array}{ccc} \alpha & 1 &  1 \\ 1 & \alpha & 1 \\ 1 & 1 & \alpha\end{array}\right]. 
\]
However, by considering the upper-left $2\times 2$ submatrix, the matrix above is positive only if $\alpha\geq1$.
Hence,  $1\leq\rho\left( (\sigma,1_2)\right)\leq1$.
\end{proof}

Thus, if one were to use the Toeplitz-modulus as a distance measure, it may happen that the distance from ${\sf S}$ to ${\sf T}$
is different from the distance from ${\sf T}$ to ${\sf S}$. A remedy to this lack of symmetry in the set of Toeplitz-contractive
operators is to take an average.

\begin{proposition} The formula $D^{\rho}({\sf S},{\sf T})=\frac{1}{2}\left(\rho({\sf S}-{\sf T})+\rho({\sf T}-{\sf S}) \right)$
defines a metric on $\B(\H)^d$.
\end{proposition}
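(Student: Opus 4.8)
The plan is to verify the three metric axioms for $D^\rho$ by reducing each to a known property of the Toeplitz modulus $\rho$ established in Propositions~\ref{prop1} and~\ref{Tm is convex}. Symmetry is immediate from the definition, since $D^\rho({\sf S},{\sf T})$ is manifestly unchanged under swapping ${\sf S}$ and ${\sf T}$. For non-negativity and the identity of indiscernibles, I would argue as follows: each of $\rho({\sf S}-{\sf T})$ and $\rho({\sf T}-{\sf S})$ is non-negative by definition, so $D^\rho({\sf S},{\sf T})\geq 0$; and $D^\rho({\sf S},{\sf T})=0$ forces $\rho({\sf S}-{\sf T})=0$, which by Proposition~\ref{prop1}(5) gives ${\sf S}-{\sf T}=0$, i.e. ${\sf S}={\sf T}$. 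The converse, $D^\rho({\sf T},{\sf T})=0$, follows since $\rho(0)=0$ (again by Proposition~\ref{prop1}(5), or directly from $\rho(0\cdot{\sf A})=0$ in Proposition~\ref{Tm is convex}).

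The only substantive point is the triangle inequality. Given ${\sf R},{\sf S},{\sf T}\in\B(\H)^d$, I would write ${\sf R}-{\sf T}=({\sf R}-{\sf S})+({\sf S}-{\sf T})$ and apply subadditivity of $\rho$ (Proposition~\ref{Tm is convex}, with $s=t=1$) to obtain
\[
\rho({\sf R}-{\sf T}) \leq \rho({\sf R}-{\sf S})+\rho({\sf S}-{\sf T}).
\]
Applying the same inequality to the reversed differences, ${\sf T}-{\sf R}=({\sf T}-{\sf S})+({\sf S}-{\sf R})$, gives
\[
\rho({\sf T}-{\sf R}) \leq \rho({\sf T}-{\sf S})+\rho({\sf S}-{\sf R}).
\]
Adding these two inequalities and multiplying by $\tfrac12$ yields exactly
\[
D^\rho({\sf R},{\sf T}) \leq D^\rho({\sf R},{\sf S}) + D^\rho({\sf S},{\sf T}),
\]
after regrouping the four terms on the right into the two symmetrised averages.

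There is no real obstacle here: the symmetrisation is precisely engineered so that the failure of $\rho$ to be symmetric (exhibited in Example~\ref{ex:nsym}) is washed out, while subadditivity — the one genuinely nontrivial input — is inherited termwise. The only care needed is bookkeeping in the triangle-inequality step, making sure the reversed differences are paired correctly so that the right-hand side collapses into $D^\rho({\sf R},{\sf S})+D^\rho({\sf S},{\sf T})$ rather than some asymmetric combination; this is routine once the two $\rho$-inequalities above are written out. I would present the proof in that order: symmetry, non-negativity and indiscernibility, then the triangle inequality as the main computation.
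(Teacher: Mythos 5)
Your proof is correct and follows essentially the same route as the paper: the triangle inequality is obtained by decomposing the differences through the intermediate point and applying the subadditivity of $\rho$ from Proposition~\ref{Tm is convex} to both orderings, with the remaining axioms reduced to Proposition~\ref{prop1}(5). The paper merely states that only the triangle inequality is non-obvious, whereas you spell out the other axioms explicitly; the substance is identical.
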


\begin{proof} The only non-obvious property of a metric to verify is the triangle inequality. If
${\sf S}, {\sl T}, {\sf R}\in\B(\H)^d$, then
\[
\begin{array}{rcl} 
D^\rho({\sf S}, {\sf T}) &=& \frac{1}{2}\rho({\sf S}- {\sl T}) + \frac{1}{2}\rho({\sf T}- {\sf S}) \\ &&\\
&=& \frac{1}{2}\left( \rho([{\sf S}- {\sf R}]+[{\sf R}-{\sf T}]) + \rho([{\sf T}- {\sf R}]+[{\sf R}-{\sf S}]) \right) \\ && \\
&\leq& \frac{1}{2}\left( \rho([{\sf S}- {\sf R}])+\rho([{\sf R}-{\sf T}]) + \rho([{\sf T}- {\sf R}])+\rho([{\sf R}-{\sf S}]) \right) \\ && \\
&=&  \frac{1}{2}\left( \rho([{\sf S}- {\sf R}])+\rho([{\sf R}-{\sf S}]\right) +  \frac{1}{2}\left( \rho([{\sf T}- {\sf R}])+\rho([{\sf R}-{\sf T}])\right) \\ && \\
&=& D^\rho({\sf S}, {\sf R}) + D^\rho({\sf R}, {\sf T}),
\end{array}
\]
where the inequality is due to the subadditivity of the Toeplitz-modulus.
Hence, $D^\rho$ is a metric.
\end{proof} 

Because the interest of the present paper is with dilations, further study of $\B(\H)^d$ as a $D^\rho$-metric space is not 
undertaken here. Again,
from the point of view of dilations, the metric $D^\rho$ is not entirely satisfying, as
the closed unit ball in the $D^\rho$-metric excludes some Toeplitz contractive $d$-tuples,
such as the pair $(\sigma,1_2)$ in Example \ref{ex:nsym}.

The following definition is a new generalistion of the numerical radius of an operator $d$-tuple.

\begin{definition}\label{d:Tm nr} If ${\sf T}=(T_1,\dots,T_d)\in\B(\H)^d$ and $(\alpha,\xi)\in\mathbb R_+\times\H$, and if
$\mathcal T_{\alpha,\xi}$ denotes the Toeplitz matrix
\begin{equation}\label{e:T nr matrix}
\mathcal T_{\alpha,\xi}=
\left[ \begin{array}{ccccc} \alpha & \langle T_1^*\xi,\xi\rangle & \langle T_2^*\xi,\xi\rangle & \dots & \langle T_d^*\xi,\xi\rangle \\
                                                             \langle T_1 \xi,\xi\rangle  & \alpha   & \langle T_1^*\xi,\xi\rangle & \ddots & \vdots \\
                                                            \langle T_2 \xi,\xi\rangle & \ddots & \ddots & \ddots &  \langle T_2^*\xi,\xi\rangle \\
                                                              \vdots & \ddots & \ddots & \ddots & \langle T_1^*\xi,\xi\rangle \\
                                                              \langle T_d \xi,\xi\rangle & \dots & \langle T_2 \xi,\xi\rangle & \langle T_1 \xi,\xi\rangle & \alpha 
\end{array}\right],
\end{equation}
then the
\emph{Toeplitz numerical radius} of ${\sf T}$ is the nonnegative real
number $\omega({\sf T})$ defined by
\begin{equation}\label{e:T nr}
\omega({\sf T}) =\inf\left\{ \alpha\in\mathbb R_+\,|\,\mathcal T_{\alpha,\xi} \mbox{ is positive for every unit vector }\xi\in\H\right\}.
\end{equation}
\end{definition}

Observe that the first column of each matrix in (\ref{e:T nr matrix}) is determined by $\alpha$
and an element of the spatial numerical range of ${\sf T}$.

As with the Toeplitz modulus, the Toeplitz numerical radius is a convex 
function, but is not homogenous for nonpositive or nonreal scalars. However, if $d=1$, then 
Definition \ref{d:Tm nr} yields $\omega({\sf T})=w(T_1)$, 
the classical numerical radius of the operator $T_1$.

The Toeplitz numerical radius is defined spatially. However, as with the classical joint numerical radius, there is a useful
alternative description in terms of states on $\B(\H)$.

\begin{proposition}\label{nr alg} If $\phi:\B(\H)\rightarrow\mathbb C$ is a state, ${\sf T}=(T_1,\dots,T_d)\in\B(\H)^d$, 
and $\alpha \in\mathbb R_+ $, and if
$\mathcal T_{\alpha,\phi}$ denotes the Toeplitz matrix
\begin{equation}\label{e:T nr matrix state}
\mathcal T_{\alpha,\phi}=
\left[ \begin{array}{ccccc} \alpha & \phi( T_1^*) & \phi(T_2^*) & \dots & \phi(T_d^*) \\
                                                             \phi( T_1 )  & \alpha   & \phi(T_1^*) & \ddots & \vdots \\
                                                            \phi( T_2 ) & \ddots & \ddots & \ddots &  \phi (T_2^*) \\
                                                              \vdots & \ddots & \ddots & \ddots & \phi( T_1^*) \\
                                                              \phi( T_d ) & \dots & \phi(T_2) & \phi (T_1 ) & \alpha 
\end{array}\right],
\end{equation}
then 
\begin{equation}\label{e:T nr alg}
\omega({\sf T}) =\inf\left\{ \alpha\in\mathbb R_+\,|\,\mathcal T_{\alpha,\phi}\; \mbox{\rm  is positive for every state }\phi\;
\mbox{\rm  on }\B(\H) \right\}.
\end{equation}
\end{proposition}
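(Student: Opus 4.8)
The plan is to establish the formula \eqref{e:T nr alg} by showing that, for each fixed $\alpha\in\mathbb R_+$, the matrix $\mathcal T_{\alpha,\xi}$ of \eqref{e:T nr matrix} is positive for every unit vector $\xi\in\H$ if and only if the matrix $\mathcal T_{\alpha,\phi}$ of \eqref{e:T nr matrix state} is positive for every state $\phi$ on $\B(\H)$. Once that equivalence is in place, the set of scalars $\alpha$ appearing in the definition \eqref{e:T nr} of $\omega({\sf T})$ coincides with the set appearing in \eqref{e:T nr alg}, and so the two infima are equal.

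The easy direction: a unit vector $\xi\in\H$ determines the vector state $\phi_\xi\colon X\mapsto\langle X\xi,\xi\rangle$ on $\B(\H)$, and $\mathcal T_{\alpha,\phi_\xi}=\mathcal T_{\alpha,\xi}$; hence if $\mathcal T_{\alpha,\phi}$ is positive for every state $\phi$, it is in particular positive for every vector state, and the inequality $\omega({\sf T})\le\inf\{\alpha \,:\, \mathcal T_{\alpha,\phi}\ge 0 \text{ for all states } \phi\}$ follows at once. For the converse, I would fix an $\alpha$ for which $\mathcal T_{\alpha,\xi}\ge 0$ for all unit $\xi$, and study the set $\mathcal C_\alpha=\{\phi \text{ a state on } \B(\H) \,:\, \mathcal T_{\alpha,\phi}\ge 0\}$. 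Two routine observations are needed: (i) the assignment $\phi\mapsto\mathcal T_{\alpha,\phi}$ is affine into the real space of $(d+1)\times(d+1)$ Hermitian matrices --- the off-diagonal entries are $\phi$ evaluated at the fixed operators $T_k$, $T_k^*$ (with $\phi(T_k^*)=\overline{\phi(T_k)}$ by positivity of $\phi$), while the diagonal is the constant $\alpha$ --- and it is continuous from the weak$^*$ topology on the state space to the norm topology on matrices; (ii) the positive semidefinite cone is closed and convex. Hence $\mathcal C_\alpha$ is a weak$^*$-closed, convex subset of the state space of $\B(\H)$, and by hypothesis it contains every vector state.

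The only nontrivial ingredient --- the hard part --- is the classical fact that the state space of $\B(\H)$ is the weak$^*$-closed convex hull of its vector states. I would prove this by Hahn--Banach separation, using that the weak$^*$-continuous linear functionals on $\B(\H)^*$ are precisely the evaluations at elements of $\B(\H)$: if some state $\psi$ lay outside the weak$^*$-closed convex hull $\mathcal C$ of the vector states, there would be a self-adjoint $X\in\B(\H)$ and a real number $c$ with $\phi(X)\le c<\psi(X)$ for every $\phi\in\mathcal C$; specialising to vector states gives $\langle X\xi,\xi\rangle\le c$ for all unit $\xi$, that is, $X\le c\,1_\H$, and then positivity and unitality of $\psi$ force $\psi(X)\le c$, a contradiction. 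Granting this, $\mathcal C_\alpha$ --- being weak$^*$-closed, convex, and containing all vector states --- must be the entire state space, so $\mathcal T_{\alpha,\phi}\ge 0$ for every state $\phi$. This yields the desired equivalence for each $\alpha$, hence \eqref{e:T nr alg}; everything outside the separation argument is standard bookkeeping with affine maps and closed convex cones.
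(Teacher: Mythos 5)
Your proof is correct, but it takes a genuinely different route from the paper's. The paper restricts each state of $\B(\H)$ to the separable C$^*$-algebra $\cstar({\sf T})$, invokes the Bunce--Salinas lemma to realise every \emph{pure} state of $\cstar({\sf T})$ as a limit of vector states (so that $\mathcal T_{\alpha,\phi}$ is an entrywise, hence norm, limit of the positive matrices $\mathcal T_{\alpha,\xi_n}$), and then reaches arbitrary states in two further steps: convex combinations of pure states, followed by a Krein--Milman approximation. You instead work directly on the state space of $\B(\H)$: you note that $\phi\mapsto\mathcal T_{\alpha,\phi}$ is affine and weak$^*$-to-norm continuous, so the set $\mathcal C_\alpha$ of states yielding a positive matrix is weak$^*$-closed and convex, and you then prove by Hahn--Banach separation that the state space is the weak$^*$-closed convex hull of the vector states --- the separation step reducing to the elementary equivalence ``$\langle X\xi,\xi\rangle\le c$ for all unit $\xi$ iff $X\le c\,1_\H$'' for self-adjoint $X$ (after replacing the separating functional by its real part, which is legitimate since states satisfy $\phi(X^*)=\overline{\phi(X)}$). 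Your argument is self-contained and avoids both the citation to Bunce--Salinas and the detour through $\cstar({\sf T})$ and its separability; it also collapses the paper's three-stage limit argument into a single convexity statement. The paper's route yields the slightly stronger fact that each pure state is itself a limit of vector states, but that extra precision is not needed for the proposition.
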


\begin{proof}
As every unit vector $\xi\in\H$ induces a state $X\mapsto\langle X\xi,\xi\rangle$
on $\B(\H)$, $\omega({\sf T})$ is no larger than the quantity on the
right side of equation (\ref{e:T nr alg}). 

Conversely, assume $\alpha\in\mathbb R_+$ has the property that $\mathcal T_{\alpha,\xi}$ is positive, 
for every unit vector $\xi$. As every state on $\B(\H)$ restricts to a state 
on the unital C$^*$-algebra
$\cstar({\sf T})$ generated by $T_1,\dots,T_d$, select a pure state $\phi$ on $\cstar({\sf T})$.
Because $\phi$ is pure and $\cstar({\sf T})$ is separable, there
is a sequence $\{\xi_n\}_{n\in\mathbb N}$ of unit vectors $\xi_n\in\H$ such that 
$\phi(X)=\displaystyle\lim_{n\rightarrow\infty}\langle X\xi_n,\xi_n\rangle$, for every $X\in\cstar({\sf T})$
\cite[Lemma 2.1]{bunce--salinas1976}. Hence, the matrices $\mathcal T_{\alpha,\xi_n}$ converge entrywise to 
$\mathcal T_{\alpha,\phi}$ as $n\rightarrow\infty$, which implies the matrices
$\mathcal T_{\alpha,\xi_n}$ converge in norm to
$\mathcal T_{\alpha,\phi}$. By hypothesis, $\mathcal T_{\alpha,\xi_n}$ is positive for every $n$;
hence, $\mathcal T_{\alpha,\phi}$ is positive.

Because $\mathcal T_{\alpha,\phi}$ is positive for every pure state $\phi$ on $\cstar({\sf T})$, the same is true for any
state $\phi$ on $\cstar({\sf T})$ that is a convex combination of pure states.

Assume $\phi$ is an arbitrary state on $\cstar({\sf T})$. By the Krein-Milman Theorem, there is a net $\{\phi_\mu\}_\mu$ of
convex combinations of pure states on $\cstar({\sf T})$ such that $\phi(X)=\displaystyle\lim_\mu \phi_\mu(X)$, for every
$X\in\cstar({\sf T})$. Hence, the net $\{\mathcal T_{\alpha,\phi_\mu}\}_\mu$ of positive matrices converges pointwise, and hence in norm,
to $\mathcal T_{\alpha,\phi}$, which proves that $\mathcal T_{\alpha,\phi}$ is positive. 

Hence, the infimum of $\left\{ \beta\in\mathbb R_+\,|\,\mathcal T_{\beta,\phi}\; \mbox{\rm  is positive for every state }\phi\;
\mbox{\rm  on }\B(\H) \right\}$ is bounded above by $\alpha$,
which proves 
the equality (\ref{e:T nr alg}).
\end{proof}

Although the Toeplitz numerical radius is defined spatially, one advantage in using the definition involving states on $\B(\H)$ is that
the set of matrices of the form $\mathcal T_{\alpha,\phi}$ that are positive, for a fixed $\alpha$ and arbitrary state $\phi$, is convex.
In contrast, the spatially defined joint numerical range of a $d$-tuple of operators can fail to be convex, implying that the set of matrices
of the form $\mathcal T_{\alpha,\xi}$ that are positive, for a fixed $\alpha$ and arbitrary unit vector $\xi$, can fail to be convex.

An inequality \cite[\S18]{Halmos-book-SM} arising from the polarisation identity 
and the characterisation of the operator norm $\|T\|$
in terms of the bilinear form $(\xi,\eta)\mapsto\langle T\xi,\eta\rangle$ on $\H\times \H$
leads to the following well-known inequalities involving the operator norm and numerical radius:
\[
\frac{1}{2}\|T\|\leq w(T) \leq \|T\|.
\]

The analogous upper bound for the Toeplitz numerical radius and modulus
is straightforward to confirm, as shown by Proposition \ref{ub} below.

\begin{proposition}\label{ub} $ \omega({\sf T})\leq \rho({\sf T})$.
\end{proposition}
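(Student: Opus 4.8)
The inequality $\omega(\sf T)\le\rho(\sf T)$ should follow immediately from the definitions together with the state-theoretic reformulation of the Toeplitz modulus. The plan is to show that if $\alpha\in\mathbb R_+$ is such that $\mathcal T_\alpha$ (the operator-matrix in \eqref{e:Aalpha} built from $\sf T$) is a positive operator on $\bigoplus_1^{d+1}\H$, then for every unit vector $\xi\in\H$ the scalar Toeplitz matrix $\mathcal T_{\alpha,\xi}$ of \eqref{e:T nr matrix} is positive in $\M_{d+1}(\mathbb C)$; granting that, the set of $\alpha$ over which the infimum defining $\rho(\sf T)$ is taken is contained in the set over which the infimum defining $\omega(\sf T)$ is taken, so $\omega(\sf T)\le\rho(\sf T)$.

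The one step that needs an argument is the passage $\mathcal T_\alpha\ge0 \Rightarrow \mathcal T_{\alpha,\xi}\ge0$. First I would fix a unit vector $\xi\in\H$ and let $\phi:\B(\H)\to\mathbb C$ be the vector state $X\mapsto\langle X\xi,\xi\rangle$. Since $\phi$ is a state, it is in particular a completely positive linear map $\B(\H)\to\mathbb C$, so its $(d+1)$-st ampliation $\phi^{[d+1]}:\M_{d+1}(\B(\H))\to\M_{d+1}(\mathbb C)$ is positive; applying it to the positive operator-matrix $\mathcal T_\alpha$ yields a positive matrix in $\M_{d+1}(\mathbb C)$. But applying $\phi$ entrywise to $\mathcal T_\alpha$ produces exactly $\mathcal T_{\alpha,\xi}$ — the diagonal entries $\alpha 1_\H$ map to $\alpha$, and the off-diagonal entries $T_k,T_k^*$ map to $\langle T_k\xi,\xi\rangle,\langle T_k^*\xi,\xi\rangle$ — so $\mathcal T_{\alpha,\xi}=\phi^{[d+1]}(\mathcal T_\alpha)\ge0$. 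This is the same mechanism used in the proof of Proposition~\ref{prop1}(4) and in the examples following Definition~\ref{rc def}.

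Having this, the conclusion is a one-line comparison of infima: every $\alpha$ that is admissible in \eqref{d:Tm2} (i.e. makes $\mathcal T_\alpha$ positive) is admissible in \eqref{e:T nr} (i.e. makes $\mathcal T_{\alpha,\xi}$ positive for all unit $\xi$), hence the infimum in \eqref{e:T nr} is over a larger set and is therefore no larger: $\omega(\sf T)\le\rho(\sf T)$. (Equivalently one could invoke Proposition~\ref{nr alg} and note that a state applied entrywise to $\mathcal T_\alpha$ yields $\mathcal T_{\alpha,\phi}$, but the vector-state version suffices and is more elementary.) There is no real obstacle here; the only thing to be careful about is bookkeeping — confirming that the entrywise image of $\mathcal T_\alpha$ under $\phi$ is precisely the matrix $\mathcal T_{\alpha,\xi}$ displayed in \eqref{e:T nr matrix}, including that the Toeplitz band structure is preserved because $\phi$ is applied uniformly to every entry.
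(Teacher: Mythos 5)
Your proposal is correct and follows essentially the same route as the paper: fix $\alpha$ with $\mathcal T_\alpha\geq 0$, apply the $(d+1)$-th ampliation of the vector state $\phi_\xi(X)=\langle X\xi,\xi\rangle$ to obtain $\mathcal T_{\alpha,\xi}=\phi_\xi^{[d+1]}(\mathcal T_\alpha)\geq 0$, and compare the two infima. The bookkeeping check you flag is exactly the observation the paper makes in identifying $\phi_\xi^{[d+1]}(\mathcal T_\alpha)$ with the matrix in \eqref{e:T nr matrix}.
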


\begin{proof}
Suppose that $\alpha\in\mathbb R_+$ is such that $\mathcal T_\alpha$ is a positive operator.
For any unit vector $\xi\in\H$, the positive linear functional $\phi_\xi(X)=\langle X\xi,\xi\rangle$, for $X\in\B(\H)$, is a unital completely positive
linear map. Therefore, the matrix $\phi_\xi^{[d+1]}(\mathcal T_\alpha)$ is positive in $\M_{d+1}(\mathbb C)$, where $\phi_\xi^{[d+1]}$ denotes
the $(d+1)$-th ampliation of $\phi_\xi$. Because the matrix $\phi_\xi^{[d+1]}(\mathcal T_\alpha)$ is the matrix given in (\ref{e:T nr}), $\alpha$
is an upper bound for $\omega({\sf T)}$. As this is true for any $\alpha\in\mathbb R_+$ for which $\mathcal T_\alpha$ is a positive operator, 
$\rho({\sf T})$ is also an upper bound for $\omega({\sf T)}$.
\end{proof}

As with the classical numerical radius, the Toeplitz numerical radius coincides with the Toeplitz modulus for $d$-tuples of commuting normal
operators. To prove this, it is convenient to introduce a version of spectral radius.

\begin{definition}\label{d:Tm sr} If ${\sf T}=(T_1,\dots,T_d)\in\B(\H)^d$ is a $d$-tuple of commuting operators
and $(\alpha,\lambda)\in\mathbb R_+\times\spec_T({\sf T})$, and if
$\Lambda_{\alpha,\lambda}$ denotes the Toeplitz matrix
\begin{equation}\label{e:T sr matrix}
\Lambda_{\alpha,\lambda}=
\left[ \begin{array}{ccccc} \alpha & \overline{\lambda}_1 & \overline{\lambda}_2 & \dots & \overline{\lambda}_d \\
                                                            \lambda_1    & \alpha   & \overline{\lambda}_1 & \ddots & \vdots \\
                                                            \lambda_2   & \ddots & \ddots & \ddots &  \overline{\lambda}_2 \\
                                                              \vdots & \ddots & \ddots & \ddots & \overline{\lambda}_1 \\
                                                              \lambda_d   & \dots & \lambda_2 & \lambda_1 & \alpha 
\end{array}\right],
\end{equation}
then the
\emph{Toeplitz spectral radius} of ${\sf T}$ is the nonnegative real
number $r({\sf T})$ defined by
\begin{equation}\label{e:T sr}
r({\sf T}) =\inf\left\{ \alpha\in\mathbb R_+\,|\,\Lambda_{\alpha,\lambda} \mbox{ is positive for every }\lambda\in \spec_T({\sf T})\right\}.
\end{equation}
Further, if $T$ is a normal $d$-tuple, then the \emph{Gelfand-Toeplitz spectral radius} of ${\sf T}$ 
is the nonnegative real
number $r_G({\sf T})$ defined by
\begin{equation}\label{e:T sr}
r_G({\sf T}) =\inf\left\{ \alpha\in\mathbb R_+\,|\,\Lambda_{\alpha,\lambda} \mbox{ is positive for every }\lambda\in \spec_G({\sf T})\right\}.
\end{equation}

\end{definition}

\begin{proposition}\label{max Tm normal} If ${\sf T}$ is a $d$-tuple of commuting operators, then
\[
r({\sf T})\leq \omega({\sf T})\leq \rho({\sf T}).
\]
If ${\sf N}$ is a normal $d$-tuple, then
\[
r_G({\sf N})=r({\sf N})=\omega({\sf N})=\rho({\sf N}).
\]
\end{proposition}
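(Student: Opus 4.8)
The plan is to prove, for an arbitrary commuting tuple, the two inequalities $r({\sf T})\le\omega({\sf T})$ and $\omega({\sf T})\le\rho({\sf T})$; and then, for a normal tuple ${\sf N}$, to establish $r({\sf N})=r_G({\sf N})$ and the reverse inequality $\rho({\sf N})\le r_G({\sf N})$, so that the chain
\[
r_G({\sf N})=r({\sf N})\le\omega({\sf N})\le\rho({\sf N})\le r_G({\sf N})
\]
forces equality everywhere.

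The inequality $\omega({\sf T})\le\rho({\sf T})$ is exactly Proposition \ref{ub}. For $r({\sf T})\le\omega({\sf T})$ I would use the state form of $\omega$ supplied by Proposition \ref{nr alg}: it suffices to show that every $\lambda\in\spec_T({\sf T})$ is of the form $\bigl(\phi(T_1),\dots,\phi(T_d)\bigr)$ for some state $\phi$ on $\B(\H)$, because then $\phi(T_k^*)=\overline{\phi(T_k)}=\overline{\lambda_k}$, so the matrix $\mathcal T_{\alpha,\phi}$ of \eqref{e:T nr matrix state} coincides with the matrix $\Lambda_{\alpha,\lambda}$ of \eqref{e:T sr matrix}; hence every $\alpha$ for which all $\mathcal T_{\alpha,\phi}$ are positive makes all $\Lambda_{\alpha,\lambda}$ positive, which gives $r({\sf T})\le\omega({\sf T})$. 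To produce $\phi$, fix $\lambda\in\spec_T({\sf T})$ and let $\mathcal S\subseteq\B(\H)$ be the operator system spanned by $1_\H$ together with the selfadjoint operators $\mathrm{Re}\,T_k$ and $\mathrm{Im}\,T_k$. A selfadjoint element of $\mathcal S$ has the form $c_0 1_\H+A+A^*$ with $c_0\in\mathbb R$ and $A=\sum_k c_k T_k$ for scalars $c_k$, and $c_0 1_\H+A+A^*\ge0$ exactly when $c_0+2\,\mathrm{Re}\,z\ge0$ for every $z$ in the numerical range of $A$, hence for every $z\in\spec(A)$. Since $\lambda\in\spec_T({\sf T})$, no linear pencil $\sum_k c_k(T_k-\lambda_k 1_\H)=A-\bigl(\sum_k c_k\lambda_k\bigr)1_\H$ can be invertible --- an algebraic inverse would commute with every $T_j$ and so would force exactness of the Koszul complex $E({\sf T}-\lambda,\H)$; equivalently this is the spectral mapping theorem for the Taylor spectrum applied to the single operator $A$ --- and therefore $\sum_k c_k\lambda_k\in\spec(A)$. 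Consequently the assignment $\psi(c_0 1_\H+A+A^*)=c_0+2\,\mathrm{Re}\sum_k c_k\lambda_k$ is well defined (the same computation, applied to a putative difference of representations, shows independence of representation) and nonnegative on positive elements of $\mathcal S$; being unital and positive on $\mathcal S$, $\psi$ extends to a state $\phi$ on $\B(\H)$, and then $\phi(T_k)=\psi(\mathrm{Re}\,T_k)+i\,\psi(\mathrm{Im}\,T_k)=\lambda_k$.

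For a normal tuple ${\sf N}$, the Taylor and Gelfand joint spectra coincide, $\spec_T({\sf N})=\spec_G({\sf N})$, which identifies $r({\sf N})$ with $r_G({\sf N})$ directly from the defining formulas; thus the first part already yields $r_G({\sf N})=r({\sf N})\le\omega({\sf N})\le\rho({\sf N})$. It remains to prove $\rho({\sf N})\le r_G({\sf N})$. The C$^*$-algebra $\cstar({\sf N})$ is abelian, and its Gelfand transform identifies it with $C(X)$, where $X=\spec_G({\sf N})$, carrying $N_k$ to the $k$-th coordinate function on $X$; hence $\M_{d+1}\bigl(\cstar({\sf N})\bigr)\cong C\bigl(X,\M_{d+1}(\mathbb C)\bigr)$. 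Since positivity in a unital C$^*$-subalgebra agrees with positivity in the ambient C$^*$-algebra, the Toeplitz operator matrix $\mathcal N_\alpha$ obtained from \eqref{e:Aalpha} with ${\sf A}={\sf N}$, whose entries lie in $\cstar({\sf N})$, is positive on $\bigoplus_1^{d+1}\H$ if and only if, for each $\lambda\in X$, the scalar matrix obtained by applying the character corresponding to $\lambda$ to each entry --- which is precisely $\Lambda_{\alpha,\lambda}$ --- is positive in $\M_{d+1}(\mathbb C)$. Taking the infimum over admissible $\alpha$ gives $\rho({\sf N})=r_G({\sf N})$, which completes the chain of equalities.

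The step I expect to be the main obstacle is the realization $\spec_T({\sf T})\subseteq W_1({\sf T})$ in the non-normal case: one must build a \emph{single} state on $\B(\H)$ that evaluates $T_k$ at $\lambda_k$ for all $k$ simultaneously. Every other ingredient --- the quadratic-form description of positivity for selfadjoint elements of an operator system, the containment $\spec(A)\subseteq\overline{W(A)}$, the extension of a state from an operator system to $\B(\H)$, and, for the normal case, the coincidence of the Taylor and Gelfand spectra and the function-algebra description of $\cstar({\sf N})$ --- is routine; the only genuinely nontrivial input is that membership of $\lambda$ in the Taylor spectrum obstructs invertibility of every linear pencil $\sum_k c_k(T_k-\lambda_k 1_\H)$, i.e. the spectral mapping theorem for the Taylor joint spectrum.
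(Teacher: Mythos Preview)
Your proposal is correct, and for the normal case it is essentially the paper's argument: both of you identify $\M_{d+1}\bigl(\cstar({\sf N})\bigr)$ with $C\bigl(\spec_G({\sf N}),\M_{d+1}(\mathbb C)\bigr)$ via the Gelfand transform and read off that $\mathcal N_\alpha$ is positive if and only if every $\Lambda_{\alpha,\lambda}$ is, giving $\rho({\sf N})\le r_G({\sf N})$. (The paper closes the chain with the trivial inequality $r_G({\sf N})\le r({\sf N})$ rather than the equality $\spec_T({\sf N})=\spec_G({\sf N})$, but this is cosmetic.)

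The genuine difference is in the step $r({\sf T})\le\omega({\sf T})$. The paper does this in one line by citing Wrobel's inclusion $\spec_T({\sf T})\subseteq\overline{W({\sf T})}$ and then passing positivity of the matrices $\mathcal T_{\alpha,\xi}$ to the closure. You instead prove directly that $\spec_T({\sf T})\subseteq W_1({\sf T})$: for each $\lambda\in\spec_T({\sf T})$ you build a state $\phi$ with $\phi(T_k)=\lambda_k$ by defining a positive unital functional on the operator system spanned by $1_\H,T_k,T_k^*$ and extending. The nontrivial input in your route is the spectral mapping theorem for the Taylor spectrum (equivalently, the observation that if a linear pencil $\sum_k c_k(T_k-\lambda_k 1_\H)$ were invertible, its inverse would commute with the $T_j$ and produce a contracting homotopy of the Koszul complex), after which well-definedness and positivity of $\psi$ follow exactly as you indicate. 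So the paper outsources the hard step to \cite{wrobel1988}, while you trade that citation for Taylor's spectral mapping theorem; your version has the minor advantage of landing directly in $W_1({\sf T})$, which meshes with Proposition~\ref{nr alg} without a separate limiting argument.
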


\begin{proof} By \cite[Corollary 2.3]{wrobel1988}, each $\lambda\in\spec_T({\sf T})$ lies in the closure of the 
spatial numerical range of ${\sf T}$. Thus, if $\alpha\in\mathbb R_+$ is such that the matrix $\mathcal T_{\alpha,\xi}$ is positive
for every unit vector $\xi$, then the matrix $\Lambda_{\alpha,\lambda}$ is positive for every $\lambda\in\spec_T({\sf T})$.
Hence, $r({\sf T})\leq\alpha$, which implies $r({\sf T})\leq \omega({\sf T})$.

Suppose now that ${\sf N}$ is normal. If $\pi:\cstar({\sf N})\rightarrow C\left( \spec_G({\sf N})\right)$
is the unital $*$-isomorphism representing the unital abelian C$^*$-algebra
$\cstar({\sf N})$ as the C$^*$-algebra $C\left( \spec_G({\sf N})\right)$ of
continuous complex-valued functions on its maximal ideal space
$\spec_G({\sf N})$, then each $\pi(N_k)$ is the function $\lambda\mapsto \lambda_k$. Furthermore, 
the ampliation 
\[
\pi^{[d+1]}:\M_{d+1}\left(\cstar({\sf N})\right)\rightarrow \M_{d+1}\left( C(\spec_G({\sf N}))\right),
\]
in which $[X_{ij}]_{i,j}\mapsto [\pi(X_{ij}]_{i,j}$, is also a unital $*$-ismorphism.

Assume $\alpha\in\mathbb R_+$ is such that
$\Lambda_{\alpha,\lambda}$ is positive for every $\lambda\in\spec_G({\sf N})$. Thus, the matrix-valued function 
$F_\alpha:\spec_G({\sf N})\rightarrow\M_{d+1}(\mathbb C)$ given by $F_\alpha(\lambda)=\Lambda_{\alpha, \lambda}$ is a positive element
in the C$^*$-algebra $\M_{d+1}\left( C(\spec_G({\sf N}))\right)$. Since $F_\alpha=\pi^{[d+1]}(\mathcal T_{\alpha})$ and because
$\pi^{[d+1]}$ is a $*$-isomorphism, $\mathcal T_\alpha$ is positive. Hence, $\rho({\sf N})\leq \alpha$, which implies 
$\rho({\sf N})\leq r_G({\sf N})$. On the other hand, 
$r_G({\sf N})\leq r({\sf N})$ by definition, and so $r_G({\sf N})\leq \rho({\sf N})$.
\end{proof}

\begin{corollary} If ${\sf S}$ is a subnormal $d$-tuple, then $r({\sf S})=\omega({\sf S})=\rho({\sf S})$.
\end{corollary}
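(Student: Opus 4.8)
The plan is to chain two inclusions together using results already established. First, recall that a subnormal $d$-tuple ${\sf S}=(S_1,\dots,S_d)$ is, by Definition \ref{d:nusn}(4), the restriction of a commuting normal $d$-tuple ${\sf N}=(N_1,\dots,N_d)\in\B(\K)^d$ to an invariant subspace $\H\subseteq\K$; writing $\iota:\H\hookrightarrow\K$ for the inclusion, we have $S_k=\iota^*N_k\iota$ and, since $\H$ is invariant for each $N_k$, also $S_k^*=\iota^*N_k^*\iota$. Because the map $X\mapsto\iota^*X\iota$ from $\B(\K)$ to $\B(\H)$ is unital and completely positive, Proposition \ref{prop1}(4) gives $\rho({\sf S})\le\rho({\sf N})$, and the same ucp map applied entrywise to the matrices in \eqref{e:T nr matrix state} (together with Proposition \ref{nr alg}) gives $\omega({\sf S})\le\omega({\sf N})$ and, comparing the defining infima directly, $r({\sf S})\le r({\sf N})$ via the containment $\spec_T({\sf S})\subseteq\spec_T({\sf N})$ — but I will instead route everything through the simpler chain below to avoid any subtlety about spectral inclusions.

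Concretely, the argument is: by Proposition \ref{max Tm normal}, for the commuting tuple ${\sf S}$ we already have $r({\sf S})\le\omega({\sf S})\le\rho({\sf S})$, so it suffices to prove the reverse inequality $\rho({\sf S})\le r({\sf S})$. Let $\alpha\in\mathbb R_+$ be such that $\Lambda_{\alpha,\lambda}$ is positive for every $\lambda\in\spec_T({\sf S})$; the goal is to show $\mathcal S_\alpha$ is positive. Using the normal extension ${\sf N}$ on $\K$, apply Proposition \ref{max Tm normal} to ${\sf N}$ to get $\rho({\sf N})=r_G({\sf N})$, so $\mathcal N_\alpha$ is positive as soon as $\Lambda_{\alpha,\lambda}$ is positive for all $\lambda\in\spec_G({\sf N})$. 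The point is that $\spec_G({\sf N})\subseteq\spec_T({\sf S})$: every $\lambda\in\spec_G({\sf N})$ arises from a character $\pi$ of $\cstar({\sf N})$, hence (restricting to the unit vectors realising $\pi$ as a weak-$*$ limit of vector states, or via the spectral-mapping description of the Taylor spectrum for the restriction of a normal tuple) lies in $\spec_T({\sf S})$ — this is the standard fact that for a subnormal tuple the Taylor spectrum of the tuple contains the joint spectrum of its minimal normal extension. Granting this inclusion, $\Lambda_{\alpha,\lambda}$ is positive for all $\lambda\in\spec_G({\sf N})$, hence $\mathcal N_\alpha$ is positive, and then compressing to $\H$ (the ucp map $X\mapsto\iota^*X\iota$) shows $\mathcal S_\alpha$ is positive, using that $\iota^*N_k\iota=S_k$ and $\iota^*N_k^*\iota=S_k^*$ entrywise. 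Therefore $\rho({\sf S})\le\alpha$, and taking the infimum over such $\alpha$ yields $\rho({\sf S})\le r({\sf S})$.

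Combining, $r({\sf S})\le\omega({\sf S})\le\rho({\sf S})\le r({\sf S})$, so all three are equal. The main obstacle is the spectral inclusion $\spec_G({\sf N})\subseteq\spec_T({\sf S})$: one must invoke the correct relationship between the Taylor spectrum of a subnormal tuple and the (Gelfand) spectrum of its normal extension — for a single subnormal operator this is classical ($\sigma(N)\subseteq\sigma(S)$ for the minimal normal extension), and the several-variable analogue follows because $\H$ is invariant for each $N_k$, so the Koszul complex of ${\sf N}-\lambda$ on $\K$ maps onto that of ${\sf S}-\lambda$ on $\H$ in a way that forces non-exactness to descend; alternatively one can cite the version of Proposition \ref{max Tm normal} applied to ${\sf N}$ together with the observation, via \cite[Corollary 2.3]{wrobel1988}, that points of $\spec_G({\sf N})$ lie in the closure of the spatial numerical range of ${\sf N}$, which is contained in that of ${\sf S}$ since vector states of $\K$ restricted through $\iota$ exhaust vector states of $\H$. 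I would use whichever of these the surrounding text has set up most cleanly; everything else is a routine application of complete positivity of compression maps.
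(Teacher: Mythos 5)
Your main argument is correct and is essentially the paper's: reduce to the minimal normal extension ${\sf N}$ via Putinar's spectral inclusion $\spec_T({\sf N})\subseteq\spec_T({\sf S})$ (for normal tuples $\spec_T=\spec_G$), apply Proposition \ref{max Tm normal} to ${\sf N}$ to get $\rho({\sf N})\leq\alpha$, and compress back to $\H$ through the ucp map $X\mapsto\iota^*X\iota$, which is exactly the route the paper takes (citing \cite{putinar1984} and \cite[Corollary 2.3]{wrobel1988}). One caution about the ``alternative'' you sketch at the end: the inclusion $\overline{W({\sf N})}\subseteq\overline{W({\sf S})}$ is false in general --- the correct inclusion is $\overline{W({\sf S})}\subseteq\overline{W({\sf N})}$, since unit vectors of $\H$ are unit vectors of $\K$ but not conversely --- so that fallback would not work; fortunately your primary route does not depend on it.
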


\begin{proof} If ${\sf N}$ is a minimal normal extension for ${\sf S}$, then 
\[
\spec_T({\sf N})\subseteq \spec_T({\sf S})\subseteq \overline{W({\sf S})} \subseteq \overline{W({\sf N})}, 
\]
where the first two inclusions are established in \cite{putinar1984} and \cite[Corollary 2.3]{wrobel1988}, respectively. Because
Proposition \ref{max Tm normal} shows the Gelfand-Toeplitz spectral radius of 
${\sf N} $ coincides with the Toeplitz numerical radius and modulus of
${\sf N}$, we deduce $r({\sf S})=\omega({\sf S})=\rho({\sf S})$.
\end{proof}

\section{Embedding $S^1$ into $\mathbb C^d$ and Matrix Convexity}

\begin{definition}
Given a Hilbert space $\H$, $d\in\mathbb N$, and $r\in\mathbb R_+$, let 
\[
\mbox{\rm Toepl}_{d,\H}^\rho(r)=\left\{{\sf T}\in\B(\H)^d\,|\,\rho({\sf T})\leq r\right\}
\]
and 
\[
\mbox{\rm Toepl}_{d,\H}^\omega(r)=\left\{{\sf T}\in\B(\H)^d\,|\,\omega({\sf T})\leq r\right\}.
\]
If $\H=\mathbb C^n$ for some $n\in\mathbb N$, then $\mbox{\rm Toepl}_{d,\H}^\rho(r)$ 
and $\mbox{\rm Toepl}_{d,\H}^\omega(r)$
are denoted by $\mbox{\rm Toepl}_{d,n}^\rho(r)$ and $\mbox{\rm Toepl}_{d,n}^\omega(r)$.
\end{definition}

If $\K$ is any Hilbert space, and $X:\K\rightarrow \H$ is a bounded linear operator, then $X$ induces a linear
map $\B(\H)^d\rightarrow\B(\K)^d$, denoted by ${\sf T}\mapsto X^*\cdot {\sf T} \cdot X$, and defined by
\[
X^*\cdot{\sf T}\cdot X= X^*\cdot(T_1,\dots,T_d)\cdot X =(X^*T_1X, \dots, X^*T_dX).
\]

\begin{definition}
An \emph{operator convex combination} of ${\sf T}_1,\dots,{\sf T}_\ell\in \B(\H)^d$ is a sum of the form
\[
\sum_{k=1}^\ell X_k^* \cdot {\sf T}_k \cdot X_k,
\]
where $X_1,\dots, X_\ell$ are any bounded linear operators $\K\rightarrow\H$, for any Hilbert space $\K$, 
for which
\[
\sum_{k=1}^\ell X_k^*X_k=1_\K.
\]
A subset of $\B(\H)^d$ which is closed under all possible operator convex combinations is said to be \emph{operator convex}.
\end{definition} 

\begin{proposition}\label{op-cnv}
For every $d\in\mathbb N$, Hilbert space $\H$, and $r\in\mathbb R_+$, the sets
$\mbox{\rm Toepl}_{d,\H}^\rho(r)$ 
and
$\mbox{\rm Toepl}_{d,\H}^\omega(r)$ are operator convex.
\end{proposition}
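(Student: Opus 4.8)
The plan is to show directly from the definitions that if $\mathsf T_1,\dots,\mathsf T_\ell$ belong to $\mathrm{Toepl}_{d,\H}^\rho(r)$ (resp.\ $\mathrm{Toepl}_{d,\H}^\omega(r)$) and $X_1,\dots,X_\ell:\K\to\H$ satisfy $\sum_k X_k^*X_k=1_\K$, then $\sum_k X_k^*\cdot\mathsf T_k\cdot X_k$ again lies in the same set. The key observation is that for a fixed scalar $\alpha$, the Toeplitz operator-matrix construction $\mathsf A\mapsto\mathcal A_\alpha$ of \eqref{e:Aalpha} is, up to rearranging coordinates, just ampliation followed by conjugation: if we set $Y_k = X_k\otimes 1_{\mathbb C^{d+1}}:\K^{\oplus(d+1)}\to\H^{\oplus(d+1)}$, then a direct inspection of entries shows that
\[
Y_k^*\,(\mathcal T_k)_\alpha\,Y_k = \bigl(X_k^*\cdot\mathsf T_k\cdot X_k\bigr)_{\alpha}'
\]
where the right-hand side is the Toeplitz matrix built from the $d$-tuple $X_k^*\cdot\mathsf T_k\cdot X_k$ but with diagonal entries $\alpha\,X_k^*X_k$ rather than $\alpha\,1_\K$. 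Summing over $k$ and using $\sum_k X_k^*X_k = 1_\K$, the diagonal entries collapse to $\alpha\,1_\K$, so $\sum_k Y_k^*(\mathcal T_k)_\alpha Y_k$ is exactly $\mathcal S_\alpha$ for $\mathsf S := \sum_k X_k^*\cdot\mathsf T_k\cdot X_k$.

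From here the argument is short. Given $r' > r$, pick for each $k$ a value $\alpha_k \le r'$ with $(\mathcal T_k)_{\alpha_k}\ge 0$; replacing each $\alpha_k$ by $r'$ only adds a positive multiple of the identity to $(\mathcal T_k)_{\alpha_k}$, so in fact $(\mathcal T_k)_{r'}\ge 0$ for every $k$. Each $Y_k^*(\mathcal T_k)_{r'}Y_k$ is then positive, hence so is the sum $\mathcal S_{r'} = \sum_k Y_k^*(\mathcal T_k)_{r'}Y_k$, which gives $\rho(\mathsf S)\le r'$. Letting $r'\downarrow r$ yields $\rho(\mathsf S)\le r$, so $\mathsf S\in\mathrm{Toepl}_{d,\H}^\rho(r)$. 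For the numerical-radius version the reasoning is identical, working instead with the scalar matrices $\mathcal T_{\alpha,\phi}$ of \eqref{e:T nr matrix state} and using Proposition \ref{nr alg}: for any state $\psi$ on $\B(\K)$ and any $X_k$, the functional $Z\mapsto \psi(X_k^* Z X_k)$ is a positive functional on $\B(\H)$ of norm $\psi(X_k^*X_k)$, and applying these (suitably normalised) states to the positive matrices $(\mathcal T_k)_{r',\cdot}$ and summing reproduces $(\mathcal S)_{r',\psi}$; positivity is preserved throughout.

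The only point requiring care—and the step I expect to be the main obstacle—is verifying the entrywise identity $Y_k^*(\mathcal T_k)_\alpha Y_k = (X_k^*\cdot\mathsf T_k\cdot X_k)_\alpha'$, i.e.\ checking that conjugation by the block-diagonal operator $X_k\otimes 1_{\mathbb C^{d+1}}$ commutes with the Toeplitz pattern. This is a routine but slightly tedious bookkeeping of indices: the $(i,j)$ entry of $(\mathcal T_k)_\alpha$ is $T_{k,\,i-j}$ for $i>j$, $T_{k,\,j-i}^*$ for $i<j$, and $\alpha 1_\H$ on the diagonal, and conjugating entrywise by $X_k$ sends $T_{k,m}\mapsto X_k^*T_{k,m}X_k$ and $\alpha 1_\H\mapsto \alpha X_k^*X_k$, which is precisely the claimed matrix. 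Once this identity is in hand, the rest is just positivity of sums of operators of the form $Y^*PY$ with $P\ge 0$, together with the observation that the defining infima are attained in the limit, so no compactness or closedness of the cone is needed. I would also note that operator convexity of these sets is the natural ``noncommutative convex set'' structure referred to in the abstract, and that Proposition \ref{op-cnv} shows $\mathrm{Toepl}_{d,\H}^\rho(1)$ is precisely the set of Toeplitz-contractive $d$-tuples on $\H$, an operator convex set.
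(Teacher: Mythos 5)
Your proof is correct and follows essentially the same route as the paper's: the central identity $\sum_k \tilde X_k^*(\mathcal T_k)_\alpha \tilde X_k=\mathcal S_\alpha$ with $\tilde X_k=\mathrm{Diag}(X_k,\dots,X_k)$, with the diagonal collapsing via $\sum_k X_k^*X_k=1_{\mathcal K}$, is exactly the paper's argument. The only (harmless) variations are that you handle general $r$ by a limiting argument $r'\downarrow r$ instead of the homogeneity reduction to $r=1$, and you treat the $\omega$ case via states and Proposition \ref{nr alg} where the paper uses vector states $\hat\eta_k=X_k\eta/\|X_k\eta\|$ directly.
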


\begin{proof} 
Because $\rho(r{\sf T})=r\rho({\sf T})$, we may assume without loss of generality that $r=1$.
Suppose ${\sf T}_1,\dots,{\sf T}_\ell\in \B(\H)^d$ are Toeplitz contractive, $\K$ is a Hilbert space, and
$X_1,\dots, X_\ell:\K\rightarrow\H$ are operators satisfying
$\displaystyle\sum_{k=1}^\ell X_k^*X_k=1_\K$. In writing
\[
{\sf T}_k=(T_{k1},\dots,T_{kd}) \mbox{ and }
{\sf S} =(S_1, \dots, S_d) = \sum_{k=1}^d X_k^* \cdot {\sf T}_k \cdot X_k,
\]
we obtain
\begin{equation}\label{e:forS}
S_j=\sum_{k=1}^\ell X_k^*T_{kj} X_k, \mbox{ for each }j=1,\dots, d.
\end{equation}
Thus,
\[
\left[ \begin{array}{ccccc}   1_\H & S_1^* & S_2^* & \dots & S_d^* \\
                                                              S_1 &   1_\H & S_1^* & \ddots & \vdots \\
                                                              S_2 & \ddots & \ddots & \ddots &  S_2^* \\
                                                              \vdots & \ddots & \ddots & \ddots & S_1^* \\
                                                              S_d & \dots & S_2 & S_1 &   1_\H 
\end{array}\right] 
=
\sum_{k=1}^\ell \tilde X_k^* \left[ \begin{array}{ccccc}   1_\H & T_{k1}^* & T_{k2}^* & \dots & T_{kd}^* \\
                                                              T_{k1} &   1_\H & T_{k1}^* & \ddots & \vdots \\
                                                              T_{k2} & \ddots & \ddots & \ddots &  T_{k2}^* \\
                                                              \vdots & \ddots & \ddots & \ddots & T_{k1}^* \\
                                                              T_{kd} & \dots & T_{k2}& T_{k1} &   1_\H 
\end{array}\right] \tilde X_k,
\]
where $\tilde X_k = \mbox{Diag}(X_k, X_k, \dots X_k)$. As the right side of the
equation above is a sum of positive operators, ${\sf S}$ is a Toeplitz
contractive $d$-tuple.

Similarly, to prove $\mbox{\rm Toepl}_{d,\H}^\omega(1)$ is operator convex for $r=1$,
select any unit vector $\eta\in\K$ and consider, using the assumptions and notation established above, the matrix
$\mathcal S_{1,\eta}$. Let $\hat\eta_k$ be $0$, if $X_k\eta=0$, or $\|X_k\eta\|^{-1}(X_k\eta)$ otherwise.
By equation (\ref{e:forS}), 
\[
\langle S_j\eta,\eta\rangle = \sum_{k=1}^\ell \langle T_{kj}X_k\eta, X_k\eta\rangle
=
\sum_{k=1}^\ell \|X_k\eta\|^2\langle T_{kj} \hat\eta_k, \hat\eta\rangle,
\]
for each $j=1,\dots,d$, and so
\[
\mathcal S_{1,\eta} = \sum_{k=1}^\ell \|X_k\eta\|^2 \mathcal T_{1,\hat\eta_k}.
\]
Since ${\sf T}\in \mbox{\rm Toepl}_{d,\H}^\omega(1)$, the matrices $\mathcal T_{1,\hat\eta_k}$ are positive, implying the
matrix $\mathcal S_{1,\eta}$ is positive.
\end{proof}

Proposition \ref{op-cnv} shows that the (graded) set of all Toeplitz contractive operators is a noncommutative convex set 
in the sense of \cite{kennedy--kim--manor2023}; likewise, for the set of all $d$-tuples of operators 
with Toeplitz numerical radius $1$ (or any $r>0$).
However, for the remainder of this section, we shall require only the notion of a matrix convex set.

\begin{definition}\label{d:m-cnvx}
A \emph{matrix convex set in $d$ free dimensions} is a graded set $\mathbb K$ of the form 
$\mathbb K=\displaystyle\bigcup_{n\in\mathbb N} \mathbb K_n$,
where 
\begin{enumerate}
\item $\mathbb K_n\subseteq\M_n(\mathbb C)^d$, and
\item for all $\ell,m,n_1,\dots,n_\ell\in\mathbb N$, $A_j\in\mathbb K_{n_j}$, and linear maps
$X_j:\mathbb C^m\rightarrow\mathbb C^{n_j}$ such that $\displaystyle\sum_{j=1}^\ell X_j^*X_j=1_m$, 
we have
\[
\sum_{j=1}^\ell X_j^*\cdot A_j \cdot X_j\in\mathbb K_n.
\]
\end{enumerate}
\end{definition}

Define $\mbox{\rm Toepl}_{d}^\rho(r)$ and $\mbox{\rm Toepl}_{d}^\omega(r)$ to be the graded sets
\[
\mbox{\rm Toepl}_{d}^\rho(r)=\displaystyle\bigcup_{n\in\mathbb N}\mbox{\rm Toepl}_{d,n}^\rho(r)\,\mbox{ and }\,
\mbox{\rm Toepl}_{d}^\omega(r)=\displaystyle\bigcup_{n\in\mathbb N}\mbox{\rm Toepl}_{d,n}^\omega(r),
\]
respectively. 
Proposition \ref{op-cnv} shows that each of $\mbox{\rm Toepl}_{d}^\rho(r)$ and $\mbox{\rm Toepl}_{d}^\omega(r)$ are matrix convex. 
Because the cone of positive operators is norm-closed, these matrix convex sets are also compact, which is to say each of the sets at the $n$-th grading
is compact in $\M_n(\mathbb C)^d$.
Another class of compact matrix convex sets arise from the matrix ranges (Definition \ref{d:qaz}, defining condition (\ref{e:jmra})) 
of operator $d$-tuples ${\sf T}\in\B(\H)$.

\begin{definition} The \emph{matrix range} of ${\sf T}\in\B(\H)^d$ is the graded set
$\mathbb W({\sf T})$ defined by
\[
\mathbb W({\sf T})= \bigcup_{n\in\mathbb N} W_n({\sf T}).
\]
\end{definition}

The matrix convexity of $\mathbb W({\sf T})$ is evident from the fact that linear maps of the form 
$A\mapsto \displaystyle\sum_{j=1}^\ell X_j^*  A  X_j$ are completely positive, and are unital when 
$\displaystyle\sum_{j=1}^\ell X_j^*  X_j$ is the identity matrix.

\begin{theorem}\label{min-pre}  The following
statements are equivalent for ${\sf A}=(A_1,\dots,A_d)\in\M_n(\mathbb C)^d$;
\begin{enumerate}
\item ${\sf A}\in \mathbb W({\sf W})$, the unitary-power tuple determined by the bilateral shift operator $W$;
\item ${\sf A}$ is Toeplitz contractive;
\item there exist $\ell\in\mathbb N$,  
$\lambda_1,\dots,\lambda_\ell\in S^1$, and $Q_1,\dots,Q_\ell\in\M_n(\mathbb C)_+$ such that
\[
\sum_{j=1}^\ell Q_j=1_n\mbox{ and } A_k=\sum_{j=1}^\ell \lambda_j^k Q_j, \mbox{ for all }k=1,\dots,d.
\]
\end{enumerate}
\end{theorem}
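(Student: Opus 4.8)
The plan is to prove the cycle of implications $(3)\Rightarrow(2)\Rightarrow(1)\Rightarrow(3)$, which keeps each step to a self-contained argument using tools already developed in the paper. For $(3)\Rightarrow(2)$, suppose $A_k=\sum_{j=1}^\ell \lambda_j^k Q_j$ with $\sum_j Q_j=1_n$ and $\lambda_j\in S^1$. Since each $Q_j\geq 0$ and $\sum Q_j=1_n$, writing $X_j=Q_j^{1/2}$ exhibits ${\sf A}$ as an operator (matrix) convex combination $\sum_j X_j^*\cdot(\lambda_j,\lambda_j^2,\dots,\lambda_j^d)\cdot X_j$ of the power-unitary scalar tuples $(\lambda_j,\dots,\lambda_j^d)$, which are Toeplitz contractive (they are of the form $(U,U^2,\dots,U^d)$ with $U=\lambda_j\in S^1$, by Example \ref{ex:power contraction}). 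Operator convexity of $\mbox{\rm Toepl}_{d,n}^\rho(1)$, established in Proposition \ref{op-cnv}, then gives ${\sf A}\in\mbox{\rm Toepl}_{d,n}^\rho(1)$; equivalently, ${\sf A}$ is Toeplitz contractive.

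For $(2)\Rightarrow(1)$, I would invoke the dilation theorem directly. If ${\sf A}\in\M_n(\mathbb C)^d$ is Toeplitz contractive, then by Theorem \ref{main result} (the separable, indeed finite-dimensional, case), condition (d) applies: for every $\varepsilon>0$ there is an isometry $V_\varepsilon:\mathbb C^n\rightarrow\ell^2(\mathbb Z)$ with $\|A_k-V_\varepsilon^*W^kV_\varepsilon\|<\varepsilon$ for $k=1,\dots,d$. Each tuple $(V_\varepsilon^*W V_\varepsilon,\dots,V_\varepsilon^*W^d V_\varepsilon)$ lies in $W_n({\sf W})$ since $X\mapsto V_\varepsilon^*XV_\varepsilon$ is a ucp map $\cstar(W)\to\M_n(\mathbb C)$ (or rather $\B(\ell^2(\mathbb Z))\to\M_n$). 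Because $W_n({\sf W})$ is a compact (hence closed) subset of $\M_n(\mathbb C)^d$, letting $\varepsilon\to 0$ yields ${\sf A}\in W_n({\sf W})\subseteq\mathbb W({\sf W})$. Alternatively, one can read this implication straight off Theorem \ref{main result}(c): a unitary power dilation $U$ on a (finite-dimensional) $\K$ gives the ucp compression $X\mapsto P_{\mathbb C^n}X_{|\mathbb C^n}$ sending $U^k$ to $A_k$, and $\cstar(U)$ is a quotient of $C(S^1)=\cstar(W)$, so the compression composed with that quotient realises ${\sf A}$ as an element of $W_n({\sf W})$.

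For $(1)\Rightarrow(3)$, suppose ${\sf A}\in W_n({\sf W})$, so $A_k=\phi(W^k)$ for some ucp map $\phi:\cstar(W)\to\M_n(\mathbb C)$. Identify $\cstar(W)\cong C(S^1)$ via $W\mapsto\chi$ (the identity function $z\mapsto z$), so $\phi$ is a ucp map $C(S^1)\to\M_n(\mathbb C)$ with $A_k=\phi(\chi^k)$. By the Stinespring/Naimark picture, such $\phi$ corresponds to an $\M_n(\mathbb C)_+$-valued regular Borel measure $\mu$ on $S^1$ with total mass $\mu(S^1)=\phi(1)=1_n$, and $\phi(f)=\int_{S^1}f\,d\mu$; in particular $A_k=\int_{S^1}z^k\,d\mu(z)$. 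This is the key structural input. It remains to replace $\mu$ by a finitely-supported measure producing the \emph{same} moments $A_1,\dots,A_d$ (and $A_0=1_n$). Since only finitely many moments $z^0,z^1,\dots,z^d$ are constrained, this is a matrix-valued truncated trigonometric moment problem, and the finite-support reduction is exactly a Carath\'eodory-type / Ando extreme-point argument: the set of $\M_n(\mathbb C)_+$-valued measures on $S^1$ with prescribed moments $(1_n,A_1,\dots,A_d)$ is a compact convex set, and its extreme points are supported on at most finitely many points of $S^1$. Evaluating the moments at an extreme point $\mu=\sum_{j=1}^\ell \delta_{\lambda_j}\otimes Q_j$ (with $Q_j\in\M_n(\mathbb C)_+$, $\sum_j Q_j=1_n$) gives precisely $A_k=\sum_{j=1}^\ell\lambda_j^k Q_j$, which is (3). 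I expect this last step—the finite atomic reduction in the matrix-valued moment problem—to be the main obstacle, since it needs either a careful extreme-point dimension count in $\M_n(\mathbb C)_+$-valued measure space or a direct appeal to Ando's theorem on truncated moment problems (\cite{ando1970,ando2013}, already cited in the paper); invoking Ando is the cleanest route and makes the argument short, at the cost of citing rather than reproving the reduction.
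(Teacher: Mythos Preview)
Your argument is correct but runs the cycle in the opposite direction from the paper, which proves $(1)\Rightarrow(2)\Rightarrow(3)\Rightarrow(1)$. There $(1)\Rightarrow(2)$ is the one-line observation that applying $\phi^{[d+1]}$ to the positive Toeplitz operator matrix for ${\sf W}$ yields $\mathcal A_1$; $(2)\Rightarrow(3)$ is a direct citation of the Gurvits separability theorem for positive block-Toeplitz matrices (via \cite{farenick--mcburney2023}); and $(3)\Rightarrow(1)$ simply writes down the ucp map $X\mapsto\sum_j\pi_j(X)Q_j$ using characters $\pi_j$ of $\cstar({\sf W})$ with $\pi_j(W)=\lambda_j$. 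Your $(3)\Rightarrow(2)$ via Proposition~\ref{op-cnv} is a clean alternative (one small point: to fit Proposition~\ref{op-cnv} literally, regard the scalar tuples as $(\lambda_j 1_n,\dots,\lambda_j^d 1_n)\in\M_n(\mathbb C)^d$ so that $X_j=Q_j^{1/2}$ has matching domain and codomain). Your $(2)\Rightarrow(1)$ via Theorem~\ref{main result} is valid but heavier than the paper's elementary $(1)\Rightarrow(2)$, since the proof of Theorem~\ref{main result} already rests on Ando/Gurvits. For your $(1)\Rightarrow(3)$, the POVM description is correct and the reduction to a finitely-supported measure is indeed the crux; note, however, that Ando's truncated-moment theorem gives only the \emph{existence} of a representing measure, not finite support. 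The Carath\'eodory/extreme-point reduction you sketch is precisely the content of Gurvits' separability theorem, so the appropriate citation for that step is Gurvits (or \cite{farenick--mcburney2023}) rather than Ando. With that adjustment, both proofs rest on the same hard ingredient; the paper's ordering isolates it cleanly as a single citation in $(2)\Rightarrow(3)$, while yours surfaces the matrix-convexity and POVM structure more explicitly.
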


\begin{proof}
If ${\sf A}\in \mathbb W({\sf W})$, then there is a ucp map $\phi:\B(\ell^2(\mathbb Z))\rightarrow\M_n(\mathbb C)$ such that
$\phi(U^k)=A_k$, for all $k=1,\dots, d$. Thus, the matrix $\mathcal A_{1}$ is given by $\phi^{[d+1]}(\mathcal U_1)$. The positivity
of $\mathcal U_1$ implies, therefore, the positivity of  $\mathcal A_{1}$, which is the assertion that the $d$-tuple ${\sf A}$ is
Toeplitz contractive.

Next, assume ${\sf A}$ is
Toeplitz contractive. As the matrix $\mathcal A_1$ 
is a $(d+1)\times(d+1)$ matrix with entries that are $n\times n$ matrices $A_k$, $\mathcal A_1$ can be viewed as a positive
element of $\M_{d+1}\otimes\M_n(\mathbb C)$. By the Gurvits separability theorem \cite[Corollary 3.3]{farenick--mcburney2023}, 
the positive block-Toeplitz matrix
$\mathcal A_1$ is separable and, moreover, each
entry of $\mathcal A_1$ admits a decomposition of the form 
$A_k=\displaystyle\sum_{j=1}^\ell \lambda_j^k Q_j$, where 
$\lambda_1,\dots,\lambda_\ell\in S^1$ and $Q_1,\dots,Q_\ell\in\M_n(\mathbb C)_+$ are such that
$\displaystyle\sum_{j=1}^\ell Q_j=1_n$.

Finally, assume $A_1,\dots,A_d\in\B(\H)$ admit decompositions as indicated in (3): $\displaystyle\sum_{j=1}^\ell Q_j=1_n$ and
$A_k=\displaystyle\sum_{j=1}^\ell \lambda_j^k Q_j$, for some $\lambda_1,\dots,\lambda_\ell\in S_1$.
Because $\mbox{\rm Sp}(W)=S^1$, there exist characters $\pi_j:\cstar({\sf W})\rightarrow\mathbb C$, for $j=1,\dots,\ell$, such that
$\pi_j(W^k)=\lambda_j^k$, for $k=1,\dots,d$.
Therefore, the linear map $\phi:\cstar({\sf W})\rightarrow\M_n(\mathbb C)$ defined by
$\phi(X)=\displaystyle\sum_{j=1}^\ell \pi_j(X)Q_j$, for $X\in\cstar({\sf W})$, is unital and completely positive, and has the property that
${\sf A}=\phi({\sf W})$. Hence, ${\sf A}\in \mathbb W({\sf W})$.
\end{proof}

The theory of minimal and maximal matrix convex sets \cite{passer--shalit--solel2018}
concerns those matrix convex sets $\mathbb K$ for which $\mathbb K_1$ is some prescribed 
compact convex set $\Omega\subseteq\mathbb C^d$. 
As demonstrated in \cite{passer--shalit--solel2018}, among all
possibilities, 
there are a smallest and a largest such matrix convex set, and 
these are denoted by $\mathbb K^{\rm min}(\Omega)$ and $\mathbb K^{\rm max}(\Omega)$, respectively.  
These matrix convex sets have rather precise descriptions:
a $d$-tuple ${\sf A}\in \mathbb K^{\rm min}(\Omega)$ if and only if
there exists a normal dilation ${\sf N}$ of ${\sf A}$ such that $\mbox{\rm Sp}_G({\sf N})\subseteq\Omega$, whereas 
a $d$-tuple ${\sf B}\in \mathbb K^{\rm max}(\Omega)$ if and only if the joint numerical range of ${\sf B}$ is a subset of $\Omega$.

\begin{definition}\label{d:stretched}
A \emph{power stretching of $S^1$ across $d$ complex dimensions} is the 
set $\mathbb S_d^1\subseteq\mathbb C^d$ defined by
\[
\mathbb S_d^1=\left\{(\lambda,\lambda^2,\dots,\lambda^d)\in\mathbb C^d\,|\,\lambda\in S^1\right\}.
\]
\end{definition}

Observe that $\mathbb S_d^1$ is a subgroup of the $d$-torus, 
isomorphic to the group $S^1$, which is why $\mathbb S_d^1$ is referred to as a 
``stretching'' of $S^1$.
For our purposes, 
the importance of $\mathbb S_d^1$ lies in the fact that it coincides with Gelfand joint spectrum of the unitary-power $d$-tuple
${\sf W}=(W,W^2,\dots, W^d)$, where $W$ is the bilateral shift operator on $\ell^2(\mathbb Z)$. 

\begin{theorem}\label{minmax} If ${\sf A}, {\sf B}\in\M_n(\mathbb C)^d$, then
\begin{enumerate}
\item ${\sf A}\in \mathbb K^{\rm min}(\mbox{\rm conv}\,\mathbb S_d^1)$ if and only if $\rho({\sf A})\leq 1$, and
\item ${\sf B}\in \mathbb K^{\rm max}(\mbox{\rm conv}\,\mathbb S_d^1)$ if and only if $\omega({\sf B})\leq 1$.
\end{enumerate}
\end{theorem}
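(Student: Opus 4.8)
The plan is to prove both statements by combining the structural description of Toeplitz-contractive tuples from Theorem \ref{min-pre} with the known characterisations of $\mathbb K^{\rm min}$ and $\mathbb K^{\rm max}$ recalled just before Definition \ref{d:stretched}. For part (1), suppose first that $\rho({\sf A})\leq 1$, i.e. ${\sf A}$ is Toeplitz-contractive. By Theorem \ref{min-pre}, there exist $\lambda_1,\dots,\lambda_\ell\in S^1$ and positive $Q_1,\dots,Q_\ell\in\M_n(\mathbb C)_+$ with $\sum_j Q_j=1_n$ and $A_k=\sum_j \lambda_j^k Q_j$. Equivalently, as in the proof of that theorem, ${\sf A}=\phi({\sf W})$ for the ucp map $\phi(X)=\sum_j \pi_j(X)Q_j$, whose Stinespring dilation produces a normal (indeed unitary-power) dilation ${\sf N}$ of ${\sf A}$ with $\spec_G({\sf N})$ contained in $\{(\lambda,\dots,\lambda^d):\lambda\in S^1\}=\mathbb S_d^1\subseteq\mathrm{conv}\,\mathbb S_d^1$. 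By the cited description of $\mathbb K^{\rm min}$, this gives ${\sf A}\in\mathbb K^{\rm min}(\mathrm{conv}\,\mathbb S_d^1)$. Conversely, if ${\sf A}\in\mathbb K^{\rm min}(\mathrm{conv}\,\mathbb S_d^1)$, then ${\sf A}$ has a normal dilation ${\sf N}$ with $\spec_G({\sf N})\subseteq\mathrm{conv}\,\mathbb S_d^1$; I would then argue that such ${\sf A}$ is Toeplitz-contractive by applying the ucp compression map to the positive block-Toeplitz matrix $T_{d+1}$ evaluated on a unitary-power dilation — but the subtlety is that $\spec_G({\sf N})$ lies in the \emph{convex hull}, not in $\mathbb S_d^1$ itself, so ${\sf N}$ need not be a unitary-power tuple. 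To handle this, I would use that any point of $\mathrm{conv}\,\mathbb S_d^1$ is a finite convex combination of points $(\lambda_j,\dots,\lambda_j^d)$, decompose the spectral measure of ${\sf N}$ accordingly, and observe that each such point $\lambda_j\in S^1$ contributes a positive rank-one-weighted Toeplitz matrix $T_{d+1}(\lambda_j)$ (positive by the factorisation $T_n(z)=\gamma_n(z)\gamma_n(z)^*$), so the integral/convex combination giving $\mathcal A_1$ remains positive.

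For part (2), the argument is parallel but uses the other characterisation. If ${\sf B}\in\mathbb K^{\rm max}(\mathrm{conv}\,\mathbb S_d^1)$, then by the cited description the joint numerical range $W({\sf B})$ — equivalently $W_1({\sf B})$ via states — satisfies $W_1({\sf B})\subseteq\mathrm{conv}\,\mathbb S_d^1$. For any state $\phi$ on $\M_n(\mathbb C)$, the point $(\phi(B_1),\dots,\phi(B_d))$ then lies in $\mathrm{conv}\,\mathbb S_d^1$, so it is a convex combination $\sum_j t_j(\lambda_j,\dots,\lambda_j^d)$; consequently the matrix $\mathcal T_{1,\phi}$ (notation of Proposition \ref{nr alg}) equals $\sum_j t_j \Lambda_{1,(\lambda_j,\dots,\lambda_j^d)} = \sum_j t_j T_{d+1}(\lambda_j)$, which is a convex combination of positive matrices and hence positive. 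By Proposition \ref{nr alg} this means $\omega({\sf B})\leq 1$. Conversely, if $\omega({\sf B})\leq 1$, then for every state $\phi$ the matrix $\mathcal T_{1,\phi}$ is positive; I would then need to deduce $(\phi(B_1),\dots,\phi(B_d))\in\mathrm{conv}\,\mathbb S_d^1$. This is where Theorem \ref{Toepl ext pts} enters decisively: a positive $(d+1)\times(d+1)$ Toeplitz matrix lies in the cone generated by the extremal rays $T_{d+1}(\lambda)$, $\lambda\in S^1$, so $\mathcal T_{1,\phi}=\sum_j s_j T_{d+1}(\lambda_j)$ with $s_j\geq 0$; reading off the $(1,1)$ entry forces $\sum_j s_j=1$, and reading off the first column below the diagonal gives $\phi(B_k)=\sum_j s_j\lambda_j^k$, i.e. $(\phi(B_1),\dots,\phi(B_d))\in\mathrm{conv}\,\mathbb S_d^1$. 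Hence $W_1({\sf B})\subseteq\mathrm{conv}\,\mathbb S_d^1$ and ${\sf B}\in\mathbb K^{\rm max}(\mathrm{conv}\,\mathbb S_d^1)$.

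The main obstacle I anticipate is the bookkeeping around the convex hull in both directions: the characterisations of $\mathbb K^{\rm min}$ and $\mathbb K^{\rm max}$ are phrased with $\Omega=\mathrm{conv}\,\mathbb S_d^1$, whereas the clean structural statement (Theorem \ref{min-pre}, extremal-ray Theorem \ref{Toepl ext pts}) naturally produces combinations supported on $\mathbb S_d^1$ itself. The key technical fact that dissolves this difficulty is that every positive $(d+1)\times(d+1)$ Toeplitz matrix decomposes as a nonnegative combination of the rank-one Toeplitz matrices $T_{d+1}(\lambda)=\gamma_{d+1}(\lambda)\gamma_{d+1}(\lambda)^*$ with $\lambda\in S^1$ (Theorem \ref{Toepl ext pts}, or Carathéodory's theorem on the trigonometric moment problem), and that the first column of $T_{d+1}(\lambda)$ is exactly $(1,\lambda,\dots,\lambda^d)$. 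This dictionary between positivity of the Toeplitz matrix and membership in $\mathrm{conv}\,\mathbb S_d^1$ of its ``symbol'' is the crux, and once it is in hand both equivalences follow by applying it pointwise to the spectral measure of a normal dilation (for (1)) or to each state (for (2)), together with the already-established facts that $\rho$ and $\omega$ are detected by positivity of $\mathcal A_1$ and of all $\mathcal T_{1,\phi}$ respectively.
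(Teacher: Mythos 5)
Your proposal is correct and follows essentially the same route as the paper: part (1) via the unitary-power dilation supplied by Theorem \ref{main result} (equivalently, the decomposition in Theorem \ref{min-pre}), and part (2) via the extremal-ray decomposition of positive Toeplitz matrices from Theorem \ref{Toepl ext pts}, reading off the first column of $T_{d+1}(\lambda)$ as $(1,\lambda,\dots,\lambda^d)$. The one place you diverge is the direction ${\sf A}\in\mathbb K^{\rm min}(\mathrm{conv}\,\mathbb S_d^1)\Rightarrow\rho({\sf A})\leq 1$: the paper simply asserts that the normal dilation may be taken to be a unitary-power tuple with joint spectrum in $\mathbb S_d^1$, whereas you explicitly confront the fact that the definition of $\mathbb K^{\rm min}$ only delivers a normal dilation ${\sf N}$ with $\spec_G({\sf N})$ in the convex hull, and you close that gap by noting that $\Lambda_{1,\mu}$ is positive for every $\mu\in\mathrm{conv}\,\mathbb S_d^1$ (a convex combination of the matrices $T_{d+1}(\lambda_j)=\gamma_{d+1}(\lambda_j)\gamma_{d+1}(\lambda_j)^*$), so that $r_G({\sf N})\leq 1$ and hence $\rho({\sf N})\leq 1$ by Proposition \ref{max Tm normal}, and $\rho({\sf A})\leq\rho({\sf N})$ by compression. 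That extra step is not merely pedantic --- it is the correct justification for a point the paper leaves implicit --- and it buys you a self-contained argument; otherwise your proof matches the paper's.
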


\begin{proof} To prove (1), assume assume ${\sf A}\in \mathbb K^{\rm min}(\mbox{\rm Conv}\,\mathbb S_d^1)$. Then, by definition of $\mathbb K^{\rm min}(\cdot)$,
${\sf A}$ admits a unitary-power dilation ${\sf U}$ such that the joint spectrum of ${\sf U}$ is a subset of $\mathbb S_d^1$.
Thus, $\rho({\sf A})\leq\rho({\sf U})=1$.

Conversely, if ${\sf A}$ is a $d$-tuple of matrices such that $\rho({\sf A})\leq 1$, then, by Theorem \ref{main result},
${\sf A}$ admits a unitary-power dilation ${\sf U}$ such that the joint spectrum of ${\sf U}$ is a subset of $\mathbb S_d^1$. Hence, 
${\sf A}\in \mathbb K^{\rm min}(\mbox{\rm Conv}\,\mathbb S_d^1)$.

To prove (2), assume ${\sf B}\in \mathbb K^{\rm max}(\mbox{\rm Conv}\,\mathbb S_d^1)$, select a unit vector $\xi$, and consider the
matrix $\mathcal B_{1,\xi}\in\M_{d+1}(\mathbb C)$. By hypothesis, $W^1({\sf B})\in \mbox{\rm Conv}\,\mathbb S_d^1$; thus,
there exist $\ell\in\mathbb N$, $t_1,\dots,\t_\ell\in\mathbb R_+$, and $\lambda_1,\dots,\lambda_\ell\in\mathbb S^1$ such that
$\displaystyle\sum_{j=1}^\ell t_j=1$ and $\langle B_k,\xi,\xi\rangle=\displaystyle\sum_{j=1}^\ell t_j\lambda_j^k$, for each $k=1,\dots,d$.
Therefore, $\mathcal B_{1,\xi}=\displaystyle\sum_{j=1}^\ell t_j T_{d+1}(\lambda_j)$, which is a convex combination of positive matrices
$T_{d+1}(\lambda_j)$ (as defined in Definition \ref{d:uTm}). 
Hence, $\mathcal B_{1,\xi}$ is positive. Since the choice of $\xi$ is arbitrary, this establishes $\omega({\sf B})\leq 1$.

Conversely, if ${\sf B}$ is a $d$-tuple of matrices such that $\omega({\sf B})\leq 1$, then, by Theorem \ref{Toepl ext pts},
for any unit vector $\xi$, the positive matrix
$\mathcal B_{1,\xi}$ is a convex combination $\mathcal B_{1,\xi}=\displaystyle\sum_{j=1}^\ell t_j T_{d+1}(\lambda_j)$ of matrices of the
form $T_{d+1}(\lambda_j)$, for some $\lambda_1,\dots,\lambda_\ell\in\mathbb S^1$. Hence, 
$\langle B_k,\xi,\xi\rangle=\displaystyle\sum_{j=1}^\ell t_j\lambda_j^k$, for each $k=1,\dots,d$, which implies $W^1({\sf B})\subseteq \mbox{\rm Conv}\,\mathbb S_d^1$.
That is, ${\sf B}\in \mathbb K^{\rm max}(\mbox{\rm Conv}\,\mathbb S_d^1)$.
\end{proof}

Because convexity theory is generally undertaken in the setting of real vectors, we shall need to pass to real and imaginary parts for both spaces and operators. 
In considering $\mathbb C^d$ as $\mathbb R^d+i\mathbb R^d\cong\mathbb R^{2d}$, $\mathbb C^d$ is a real vector space of real dimension $2d$,
and convex subsets $\Omega\subseteq\mathbb C^d$ are also convex when considered as subsets of $\mathbb R^{2d}$. Importantly, if 
$\mathbb B_{\ell^2(n)}^{\mathbb C}$ and $\mathbb B_{\ell^2(n)}^{\mathbb R}$ denote the closed unit balls of $\mathbb C^n$ and $\mathbb R^n$, respectively, with respect to the
Euclidean $\ell^2$ norm, then 
\[
\mathbb B_{\ell^2(d)}^{\mathbb C}=\mathbb B_{\ell^2(2d)}^{\mathbb R},
\]
in considering $\mathbb B_{\ell^2(d)}^{\mathbb C}$ as a subset of $\mathbb R^{2d}$.

\begin{proposition}\label{contain} $\frac{1}{\sqrt{d}}\mathbb B_{\ell^2(d)}^{\mathbb C} \subseteq \mbox{\rm Conv}\,\mathbb S_d^1 
\subseteq \sqrt{d}\,\mathbb B_{\ell^2(d)}^{\mathbb C}$.
\end{proposition}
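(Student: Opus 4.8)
\emph{Plan of proof.} The right-hand inclusion is immediate and I would dispatch it first: every generator $(\lambda,\lambda^2,\dots,\lambda^d)$ of $\mathbb S_d^1$ has Euclidean norm $\big(\sum_{k=1}^d|\lambda^k|^2\big)^{1/2}=\sqrt d$, so $\mathbb S_d^1\subseteq\sqrt d\,\mathbb B_{\ell^2(d)}^{\mathbb C}$, and since the ball is convex, $\mathrm{Conv}\,\mathbb S_d^1\subseteq\sqrt d\,\mathbb B_{\ell^2(d)}^{\mathbb C}$.

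For the left-hand inclusion I would reduce to a matrix-positivity statement via Theorem \ref{min-pre} (with $n=1$): a tuple $a=(a_1,\dots,a_d)\in\mathbb C^d$ lies in $\mathrm{Conv}\,\mathbb S_d^1$ precisely when the scalar $(d+1)\times(d+1)$ Hermitian Toeplitz matrix $\mathcal A_1$ with unit diagonal and $k$-th subdiagonal equal to $a_k$ is positive semidefinite. Writing $\mathcal A_1=I+T+T^*$ with $T=\sum_{k=1}^d a_kS^k$, where $S$ is the nilpotent down-shift on $\mathbb C^{d+1}$, one has for every $v\in\mathbb C^{d+1}$
\[
\langle \mathcal A_1 v,v\rangle=\|v\|^2+2\,\mathrm{Re}\sum_{k=1}^d a_k\langle S^kv,v\rangle\ \ge\ \|v\|^2-2\|a\|_2\Big(\sum_{k=1}^d|\langle S^kv,v\rangle|^2\Big)^{1/2}.
\]
The sum on the right is controlled by the Parseval identity $\sum_{k=1}^d|\langle S^kv,v\rangle|^2=\tfrac12\big(\|p_v\|_{L^4(S^1)}^4-\|v\|^4\big)$, where $p_v(e^{i\theta})=\sum_{j=0}^d v_je^{ij\theta}$ and all $L^p$-norms are with respect to normalised arc length; one then bounds $\|p_v\|_{L^4}$ in terms of $d$ and $\|v\|=\|p_v\|_{L^2}$ by a Nikolskii-type inequality. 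Equivalently, and more geometrically, since $\mathrm{Conv}\,\mathbb S_d^1$ is a compact convex body with $0$ in its interior, the inclusion $r\,\mathbb B_{\ell^2(d)}^{\mathbb C}\subseteq\mathrm{Conv}\,\mathbb S_d^1$ holds iff its support function dominates $u\mapsto r\|u\|_2$; computing the support function over the extreme points of $\mathbb S_d^1$ gives $\max_\theta g_u(\theta)$ for the mean-zero real trigonometric polynomial $g_u(\theta)=\mathrm{Re}\sum_{k=1}^d u_ke^{-ik\theta}$ of degree $\le d$, whose $L^2$-norm is $\|u\|_2/\sqrt2$, so the claim is equivalent to a sharp lower bound of Turán/Chebyshev type, $\max_\theta g(\theta)\ge c_d\|g\|_{L^2}$, for all such $g$.

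The main obstacle is the value of the constant. The routine estimates $|\langle S^kv,v\rangle|\le\|v\|^2$ and $\|p_v\|_\infty\le\sqrt{d+1}\,\|v\|$ give $\sum_{k=1}^d|\langle S^kv,v\rangle|^2\le\frac d2\|v\|^4$, hence $\langle\mathcal A_1 v,v\rangle\ge(1-\sqrt{2d}\,\|a\|_2)\|v\|^2$ and therefore the inclusion with radius $1/\sqrt{2d}$; this already fixes the correct order $d^{-1/2}$. Upgrading the radius to the $1/\sqrt d$ asserted in the proposition is precisely where the delicate analysis lies: it amounts to the sharp form of the above trigonometric extremal inequality (equivalently, the sharp Nikolskii constant for mean-zero trigonometric polynomials of degree $d$), and I would expect that step, rather than the reduction, to carry essentially all of the difficulty.
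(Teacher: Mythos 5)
Your right-hand inclusion is the paper's argument verbatim, and your reduction of the left-hand inclusion to positive semidefiniteness of the unit-diagonal Hermitian Toeplitz matrix $\Lambda_{1,\xi}$ is exactly the paper's reduction (the paper cites Theorem~\ref{Toepl ext pts} where you cite Theorem~\ref{min-pre} with $n=1$; these are the same fact). As a proof of the proposition as stated, however, your argument has a genuine gap: the only radius you rigorously establish is $1/\sqrt{2d}$, and the upgrade to $1/\sqrt{d}$ is deferred to a sharp trigonometric extremal inequality that you do not prove. You were right to locate all the difficulty there, but the step cannot be carried out: the constant $1/\sqrt{d}$ is false for $d\ge 2$. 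Take $d=2$ and $\xi=(-2/3,\,-\sqrt2/6)$, so that $\|\xi\|_2=1/\sqrt2$; with $v=(1,\sqrt2,1)$ one computes $\langle \Lambda_{1,\xi}v,v\rangle = 4-3\sqrt2<0$, so $\Lambda_{1,\xi}$ is not positive and $\xi\notin\mathrm{Conv}\,\mathbb S_2^1$. (In your support-function formulation: for the unit vector $u=-(2\sqrt2/3,\,1/3)$ one finds $\max_\theta g_u(\theta)=2/3<1/\sqrt2$, so the inradius of $\mathrm{Conv}\,\mathbb S_2^1$ is at most $2/3$.) The paper's own proof breaks at the same point: Ger\v{s}gorin requires the off-diagonal absolute sum to be at most the diagonal entry in \emph{every} row, and while the first row of $\Lambda_{1,\xi}$ has sum $\sum_j|\xi_j|\le\sqrt d\,\|\xi\|_2\le 1$, an interior row can have sum close to $2\sum_j|\xi_j|$ (already $2|\xi_1|$ in the second row), which the hypothesis $\|\xi\|_2\le 1/\sqrt d$ does not control; the Toeplitz structure does not reduce the check to the first column.

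The rigorous part of your argument --- Cauchy--Schwarz together with the identity $\sum_{k=1}^d|\langle S^kv,v\rangle|^2=\tfrac12\bigl(\|p_v\|_{L^4}^4-\|v\|^4\bigr)\le\tfrac d2\|v\|^4$, yielding $\langle\Lambda_{1,\xi}v,v\rangle\ge\bigl(1-\sqrt{2d}\,\|\xi\|_2\bigr)\|v\|^2$ --- is correct and proves the true statement $\frac{1}{\sqrt{2d}}\,\mathbb B_{\ell^2(d)}^{\mathbb C}\subseteq\mathrm{Conv}\,\mathbb S_d^1$, which is what the proposition should assert: the order $d^{-1/2}$ is right, but the constant must be strictly smaller than $1$ (a Fej\'er-kernel direction shows the inradius is $O(d^{-1/2})$ with constant at most $\sqrt3/2$). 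This weaker inclusion still gives Corollary~\ref{cnvx body}, i.e.\ that $\mathrm{Conv}\,\mathbb S_d^1$ is a convex body with $0$ in its interior, but it propagates into the quantitative estimates: one obtains $\delta\bigl(\mathbb B_{\ell^2(d)}^{\mathbb C},\mathrm{Conv}\,\mathbb S_d^1\bigr)\le\sqrt2\,d$ rather than $\le d$, hence only $c_d\ge\sqrt2$ from Theorem~\ref{cnst}(3); the claim $c_d\ge 2$ in Theorem~\ref{lb} would need a separate argument. So do not try to close the gap in your proof; correct the constant instead.
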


\begin{proof} The inclusion $\mbox{\rm Conv}\,\mathbb S_d^1 \subseteq \sqrt{d}\,\mathbb B_{\ell^2(d)}^{\mathbb C}$ follows from the fact that
every element of $\mathbb S_d^1$ has norm $\sqrt{d}$.

Conversely, select a vector $\xi\in\mathbb C^d$ of norm $\|\xi\|\leq\frac{1}{\sqrt{d}}$, and consider the Toeplitz matrix $\Lambda_{1,\xi}$.
To show that $\xi\in \mbox{\rm Conv}\,\mathbb S_d^1 $, it is sufficient to prove that $\Lambda_{1,\xi}$ is positive. Thus, by the Ger\v sgorin Circle Theorem, it is
sufficient to show the sum of the moduli of the off-diagonal entries in each row (or column) of $\Lambda_{1,\xi}$ is no larger than $1$. 
Because of the Toeplitz structure of $\Lambda_{1,\xi}$,
it sufficient to show this for the first column. By the Cauchy-Schwarz inequality,
\[
\sum_{j=1}^{d} |\xi_j| \leq \sqrt{d} \left(\sum_{j=1}^{d} |\xi_j|^2 \right)^{1/2} = \sqrt{d}\, \|\xi\| \leq 1.
\]
Thus, $\Lambda_{1,\xi}$ is positive, and so $\xi\in \mbox{\rm Conv}\,\mathbb S_d^1$.
\end{proof}

\begin{corollary}\label{cnvx body} The convex hull of $\mathbb S_d^1$ is a convex body in $\mathbb C^d$
containing $0$ in its interior.
\end{corollary}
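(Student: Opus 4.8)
The plan is to check the three constituents of the assertion in turn: that $\mbox{\rm Conv}\,\mathbb S_d^1$ is convex, that it is compact, and that it has $0$ in its interior. Convexity requires no argument, as $\mbox{\rm Conv}\,\mathbb S_d^1$ is by definition a convex hull.

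For compactness, I would first observe that $\mathbb S_d^1=\{(\lambda,\lambda^2,\dots,\lambda^d) : \lambda\in S^1\}$ is the image of the compact set $S^1$ under a continuous map, hence compact. Working in the real vector space $\mathbb C^d\cong\mathbb R^{2d}$ and applying Carath\'eodory's theorem, every point of $\mbox{\rm Conv}\,\mathbb S_d^1$ is a convex combination of at most $2d+1$ points of $\mathbb S_d^1$; thus $\mbox{\rm Conv}\,\mathbb S_d^1$ is the image of the compact set $\Delta\times(\mathbb S_d^1)^{2d+1}$, where $\Delta$ is the standard simplex of coefficient vectors $(t_0,\dots,t_{2d})$, under the continuous map $\bigl((t_j)_j,(z_j)_j\bigr)\mapsto\sum_{j=0}^{2d}t_jz_j$, and is therefore compact. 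Equivalently, one may simply cite the standard fact that the convex hull of a compact subset of a finite-dimensional normed space is compact.

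Finally, the claim that $0$ lies in the interior is exactly the content of the left-hand inclusion in Proposition \ref{contain}: since $\frac{1}{\sqrt d}\mathbb B_{\ell^2(d)}^{\mathbb C}\subseteq\mbox{\rm Conv}\,\mathbb S_d^1$ and the ball on the left has positive radius and is centred at $0$, the open ball of radius $\frac{1}{\sqrt d}$ about $0$ is contained in $\mbox{\rm Conv}\,\mathbb S_d^1$. Combining the three points, $\mbox{\rm Conv}\,\mathbb S_d^1$ is a compact convex set with nonempty interior containing $0$, i.e.\ a convex body with $0$ in its interior. I do not foresee a genuine difficulty here; the nearest thing to an obstacle is the bookkeeping around real versus complex dimension when invoking Carath\'eodory over $\mathbb R^{2d}$ rather than $\mathbb C^d$ (so the relevant bound is $2d+1$ summands), together with fixing the meaning of ``convex body'' as ``compact convex set with nonempty interior'', which is the reading consistent with the surrounding discussion of compact matrix convex sets.
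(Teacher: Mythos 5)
Your proposal is correct and matches the paper's (implicit) argument: the corollary is stated without proof as an immediate consequence of Proposition \ref{contain}, with the left-hand inclusion supplying the interior point $0$ and standard finite-dimensional facts supplying compactness of the convex hull of the compact set $\mathbb S_d^1$. Your Carath\'eodory argument simply makes the compactness step explicit, correctly working over $\mathbb R^{2d}$.
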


The importance of Corollary \label{cnvx body} lies in the fact that the results of \cite[\S3.2]{passer--shalit--solel2018} can be applied to 
$\mbox{\rm Conv}\,\mathbb S_d^1$. To this end, the following Banach-Mazur-type distance, applicable to nonsymmetric convex bodies, was 
used to great value in \cite{passer--shalit--solel2018}. Namely, if $K$ and $L$ are
two compact convex subsets of $\mathbb R^{2d}$
such that $K \cap L\supseteq\{0\}$, then
\[
\delta(K,L)=\inf\left\{C>0\,|\,\mbox{there exists }R\in\mbox{\rm GL}_{2d}(\mathbb R)\mbox{ such that } K\subseteq R(L) \subseteq C\cdot K\right\}
\]
is finite. 

In the case where $K=\mathbb B_{\ell^2(d)}^{\mathbb C}$ and $L=\mbox{\rm Conv}\,\mathbb S_d^1$, Proposition \ref{contain} shows that
\[
\delta\left(\mathbb B_{\ell^2(d)}^{\mathbb C},\mbox{\rm Conv}\mathbb S_d^1\right) \leq d,
\]
where in the set for which $\delta$ is the infimum, one choice of $R$ and $C$ is given by $R=\sqrt{d}\cdot 1_{2d}$
and $C=\sqrt{d}\,\sqrt{d}=d$.

\begin{theorem}\label{cnst}{\rm (\cite[\S5]{passer--shalit--solel2018})} If $\Omega$ is a compact convex body in $\mathbb C^d$
with $0$ in its interior, then there exists a positive constant $\theta(\Omega)$
such that
\begin{enumerate}
\item $\mathbb K^{\rm max}(\Omega) \subseteq \theta(\Omega) \cdot \mathbb K^{\rm min}(\Omega)$,
\item $\theta(\Omega)=\min\left\{C>0\,|\,\mathbb K^{\rm max}(\Omega) \subseteq C \cdot \mathbb K^{\rm min}(\Omega)\right\}$, and
\item $2d \leq \delta\left(\mathbb B_{\ell^2(d)}^{\mathbb C},\Omega\right)\,\theta(\Omega)$.
\end{enumerate}
\end{theorem}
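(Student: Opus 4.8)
The plan is to reduce all three assertions to elementary functorial properties of the operations $\mathbb K^{\rm min}$ and $\mathbb K^{\rm max}$ together with the known sharp dilation constant for the Euclidean ball, after first passing to self-adjoint tuples via the identification $\mathbb C^d\cong\mathbb R^{2d}$ (a $d$-tuple of matrices corresponds to the $2d$-tuple of its real and imaginary parts, and a normal tuple to a commuting self-adjoint tuple). Under this identification $\mathbb K^{\rm min}(\Omega)$ and $\mathbb K^{\rm max}(\Omega)$ are the smallest and the largest matrix convex sets of self-adjoint $2d$-tuples whose first level equals $\Omega\subseteq\mathbb R^{2d}$, and the properties I would use are: \emph{monotonicity}, $\Omega_1\subseteq\Omega_2$ implies $\mathbb K^{\rm min}(\Omega_1)\subseteq\mathbb K^{\rm min}(\Omega_2)$ and likewise for $\mathbb K^{\rm max}$; \emph{linear covariance}, $\mathbb K^{\rm min}(R(\Omega))=R\bigl(\mathbb K^{\rm min}(\Omega)\bigr)$ and $\mathbb K^{\rm max}(R(\Omega))=R\bigl(\mathbb K^{\rm max}(\Omega)\bigr)$ for $R\in\mathrm{GL}_{2d}(\mathbb R)$ acting on the tuple coordinates, since that action commutes with the matrix-convex-combination map $\{V_j\}\mapsto\sum_j V_j^*(\cdot)V_j$, which acts on the matrix entries; and, as the special case $R=C\cdot 1$, \emph{positive homogeneity} $\mathbb K^{\rm min}(C\Omega)=C\,\mathbb K^{\rm min}(\Omega)$, $\mathbb K^{\rm max}(C\Omega)=C\,\mathbb K^{\rm max}(\Omega)$ for $C>0$. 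I would also note that $\mathbb K^{\rm min}(\Omega)\subseteq\mathbb K^{\rm max}(\Omega)$ and that, since $0\in\Omega$, each level $\mathbb K^{\rm min}(\Omega)_n$ is compact, convex, and contains $0$, hence is star-shaped about $0$.

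For (1) and (2) I would first exhibit \emph{some} finite admissible constant. As $0$ is interior to $\Omega$, there are $0<r\le\rho$ with $r\,\mathbb B_{\ell^2(2d)}^{\mathbb R}\subseteq\Omega\subseteq\rho\,\mathbb B_{\ell^2(2d)}^{\mathbb R}$; combining monotonicity and homogeneity with the bounded equivalence of the minimal and maximal matrix convex sets over the Euclidean ball, $\mathbb K^{\rm max}(\mathbb B_{\ell^2(m)}^{\mathbb R})\subseteq m\,\mathbb K^{\rm min}(\mathbb B_{\ell^2(m)}^{\mathbb R})$ (the ``ball dilation theorem''), gives
\[
\mathbb K^{\rm max}(\Omega)\subseteq\rho\,\mathbb K^{\rm max}(\mathbb B_{\ell^2(2d)}^{\mathbb R})\subseteq 2d\rho\,\mathbb K^{\rm min}(\mathbb B_{\ell^2(2d)}^{\mathbb R})\subseteq\frac{2d\rho}{r}\,\mathbb K^{\rm min}(\Omega).
\]
Thus $\mathcal C=\{C>0\,:\,\mathbb K^{\rm max}(\Omega)\subseteq C\,\mathbb K^{\rm min}(\Omega)\}$ is nonempty; it is an up-set by star-shapedness of the levels, and is contained in $[1,\infty)$ because $\Omega\subseteq C\Omega$ forces $C\ge 1$. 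Put $\theta(\Omega)=\inf\mathcal C\ge 1$. To see $\theta(\Omega)\in\mathcal C$ — which is assertion (2) and which makes (1) hold with the constant $\theta(\Omega)$ itself — fix $n$ and ${\sf A}\in\mathbb K^{\rm max}(\Omega)_n$; choosing $C_j\in\mathcal C$ with $C_j\downarrow\theta(\Omega)$ we have ${\sf A}/C_j\in\mathbb K^{\rm min}(\Omega)_n$ for all $j$, and closedness of $\mathbb K^{\rm min}(\Omega)_n$ gives ${\sf A}/\theta(\Omega)=\lim_j{\sf A}/C_j\in\mathbb K^{\rm min}(\Omega)_n$, that is, ${\sf A}\in\theta(\Omega)\,\mathbb K^{\rm min}(\Omega)_n$. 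As $n$ is arbitrary, (1) and (2) follow.

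For (3) I would establish the submultiplicativity estimate $\theta(K)\le\delta(K,L)\,\theta(L)$ for any two compact convex bodies $K,L\subseteq\mathbb R^{2d}$ each containing $0$ in its interior: whenever $R\in\mathrm{GL}_{2d}(\mathbb R)$ and $C>0$ satisfy $K\subseteq R(L)\subseteq C\,K$, the functorial properties give
\[
\mathbb K^{\rm max}(K)\subseteq\mathbb K^{\rm max}(R(L))=R\bigl(\mathbb K^{\rm max}(L)\bigr)\subseteq R\bigl(\theta(L)\,\mathbb K^{\rm min}(L)\bigr)=\theta(L)\,\mathbb K^{\rm min}(R(L))\subseteq C\,\theta(L)\,\mathbb K^{\rm min}(K),
\]
so $\theta(K)\le C\,\theta(L)$; passing to the infimum over admissible pairs $(R,C)$ yields the estimate. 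Applying it with $K=\mathbb B_{\ell^2(d)}^{\mathbb C}=\mathbb B_{\ell^2(2d)}^{\mathbb R}$ and $L=\Omega$ gives $\theta(\mathbb B_{\ell^2(2d)}^{\mathbb R})\le\delta(\mathbb B_{\ell^2(d)}^{\mathbb C},\Omega)\,\theta(\Omega)$, so (3) is reduced to the lower bound $\theta(\mathbb B_{\ell^2(m)}^{\mathbb R})\ge m$ with $m=2d$ — the optimality half of the ball dilation theorem.

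I expect this last point to be the main obstacle: unlike the soft compactness and change-of-variables arguments above, it requires producing an explicit tuple that provably resists dilation. The natural candidate is the $m$-tuple $(E_1,\dots,E_m)$ of anticommuting self-adjoint unitaries (Clifford generators); from $\bigl(\sum_i a_iE_i\bigr)^2=\bigl(\sum_i a_i^2\bigr)\,1$ one gets $\|\sum_i a_iE_i\|=\|a\|_2$, so $(E_1,\dots,E_m)\in\mathbb K^{\rm max}(\mathbb B_{\ell^2(m)}^{\mathbb R})$. The remaining, genuinely hard, step is to show that this tuple admits no commuting self-adjoint dilation with joint spectrum contained in $C\cdot\mathbb B_{\ell^2(m)}^{\mathbb R}$ when $C<m$, that is, $(E_1,\dots,E_m)\notin C\cdot\mathbb K^{\rm min}(\mathbb B_{\ell^2(m)}^{\mathbb R})$ for $C<m$; this is the content of the optimality argument in \cite{passer--shalit--solel2018}. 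Granting it, $2d\le\theta(\mathbb B_{\ell^2(2d)}^{\mathbb R})\le\delta(\mathbb B_{\ell^2(d)}^{\mathbb C},\Omega)\,\theta(\Omega)$, establishing (3).
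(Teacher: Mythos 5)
The paper does not actually prove this theorem---it is quoted, with attribution, from \cite[\S5]{passer--shalit--solel2018}---so the only "proof" on offer here is the citation, and your reconstruction follows the same strategy as that source: the functorial reductions (monotonicity, $\mathrm{GL}_{2d}(\mathbb R)$-covariance, homogeneity), the attainment of the infimum via closedness of $\mathbb K^{\rm min}(\Omega)_n$, and the submultiplicativity estimate $\theta(K)\leq\delta(K,L)\,\theta(L)$ are all correct and complete as written. The two ingredients you explicitly defer---the ball dilation theorem $\mathbb K^{\rm max}\bigl(\mathbb B_{\ell^2(m)}^{\mathbb R}\bigr)\subseteq m\cdot\mathbb K^{\rm min}\bigl(\mathbb B_{\ell^2(m)}^{\mathbb R}\bigr)$ and its sharpness via the anticommuting (Clifford) tuple---are precisely the results established in the cited reference, so your proposal is faithful to the intended argument rather than a genuinely different route.
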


The final main result of this paper is an analogue, for the Toeplitz modulus and numerical radius, of the
familar inequality $\|T\|\leq 2w(T)$ in operator theory. However, the exact value of the constant $c_d$ is not known for $d\geq 2$.

\begin{theorem}\label{lb} For every $d\in\mathbb N$, there exists a constant $c_d\geq 2$ such that
\[
\rho({\sf T})\leq c_d\cdot\omega({\sf T}),
\]
for every operator $d$-tuple $T\in\B(\H)^d$.
\end{theorem}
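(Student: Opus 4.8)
The strategy is to identify a working constant with the scaling constant $\theta(\Omega)$ of Theorem \ref{cnst} for the body $\Omega:=\mbox{\rm conv}\,\mathbb S_d^1$, and then transport the inclusion it furnishes back to $\rho$ and $\omega$ through Theorem \ref{minmax}. By Corollary \ref{cnvx body}, $\Omega$ is a compact convex body in $\mathbb C^d$ with $0$ in its interior, so Theorem \ref{cnst} applies and produces a positive number $\theta(\Omega)$ with $\mathbb K^{\rm max}(\Omega)\subseteq\theta(\Omega)\cdot\mathbb K^{\rm min}(\Omega)$. Set $c_d:=\theta(\Omega)$. The bound $c_d\ge 2$ is then immediate from Theorem \ref{cnst}(3): since $2d\le\delta\bigl(\mathbb B_{\ell^2(d)}^{\mathbb C},\Omega\bigr)\,\theta(\Omega)$ while $\delta\bigl(\mathbb B_{\ell^2(d)}^{\mathbb C},\Omega\bigr)\le d$ (established after Theorem \ref{cnst} from the inclusions of Proposition \ref{contain}, using $R=\sqrt d\cdot 1_{2d}$), dividing gives $\theta(\Omega)\ge 2$.

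I would next prove the inequality when $\H$ is finite-dimensional, say ${\sf T}\in\M_n(\mathbb C)^d$. If $\omega({\sf T})=0$, then the $2\times 2$ principal submatrix of $\mathcal T_{\alpha,\xi}$ in rows and columns $1$ and $k+1$ forces $|\langle T_k\xi,\xi\rangle|\le\alpha$ for all unit $\xi$ and all admissible $\alpha$, so $w(T_k)=0$ for each $k$, whence ${\sf T}=0$ and the inequality is trivial. Otherwise put $s:=\omega({\sf T})>0$; by positive homogeneity $\omega(s^{-1}{\sf T})=1$, so $s^{-1}{\sf T}\in\mathbb K^{\rm max}(\Omega)$ by Theorem \ref{minmax}(2), hence $s^{-1}{\sf T}\in c_d\cdot\mathbb K^{\rm min}(\Omega)$, i.e. $(sc_d)^{-1}{\sf T}\in\mathbb K^{\rm min}(\Omega)$, and Theorem \ref{minmax}(1) then yields $\rho\bigl((sc_d)^{-1}{\sf T}\bigr)\le 1$, that is, $\rho({\sf T})\le c_d\,\omega({\sf T})$.

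For an arbitrary Hilbert space $\H$ and ${\sf T}\in\B(\H)^d$, I would reduce to the matrix case by compression. By positive homogeneity we may assume $\omega({\sf T})\le 1$ (the case ${\sf T}=0$ being trivial), and it then suffices to show $\langle\mathcal T_{c_d}\,v,v\rangle\ge 0$ for every $v=(v_0,\dots,v_d)\in\bigoplus_1^{d+1}\H$. Let $\mathcal L$ be the finite-dimensional span of $v_0,\dots,v_d$, let $P$ be the orthogonal projection of $\H$ onto $\mathcal L$, and set ${\sf T}'=(PT_1P|_{\mathcal L},\dots,PT_dP|_{\mathcal L})$, viewed in $\M_m(\mathbb C)^d$ with $m=\dim\mathcal L$. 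The compression $\Phi\colon X\mapsto PXP|_{\mathcal L}$ is unital, adjoint-preserving and completely positive, so for each state $\eta$ on $\B(\mathcal L)$ the functional $\eta\circ\Phi$ is a state on $\B(\H)$, and the matrix $(\ref{e:T nr matrix state})$ attached to $({\sf T}',\eta)$ equals the one attached to $({\sf T},\eta\circ\Phi)$; by Proposition \ref{nr alg} this gives $\omega({\sf T}')\le\omega({\sf T})\le 1$, in parallel with Proposition \ref{prop1}(4). The finite-dimensional case now gives $\rho({\sf T}')\le c_d$, so the Toeplitz matrix $\mathcal T_{c_d}$ formed from ${\sf T}'$ is positive on $\bigoplus_1^{d+1}\mathcal L$; and since $Pv_j=v_j$ and $v_i\in\mathcal L$ for all $i,j$, a block-by-block comparison shows that $\langle\mathcal T_{c_d}\,v,v\rangle$ has the same value whether $\mathcal T_{c_d}$ is formed from ${\sf T}$ or from ${\sf T}'$, hence is nonnegative. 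As $v$ was arbitrary, $\mathcal T_{c_d}\ge 0$, i.e. $\rho({\sf T})\le c_d=c_d\,\omega({\sf T})$.

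Most of this is bookkeeping once Theorems \ref{minmax} and \ref{cnst} are available. The one point that needs genuine care is the monotonicity $\omega({\sf T}')\le\omega({\sf T})$ of the Toeplitz numerical radius under compression, which is exactly what lets the matricial inequality pass to arbitrary Hilbert spaces; here the state-theoretic description of $\omega$ in Proposition \ref{nr alg}, rather than its spatial definition, is essential. (One could instead route the infinite-dimensional case through Theorem \ref{main result}, which already records that Toeplitz-contractivity is detected on finite-dimensional compressions, but the estimate above is self-contained.)
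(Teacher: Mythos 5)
Your argument follows the paper's own route exactly: take $c_d=\theta(\mbox{\rm Conv}\,\mathbb S_d^1)$ from Theorem \ref{cnst}, get $c_d\geq 2$ from part (3) together with the bound $\delta\bigl(\mathbb B_{\ell^2(d)}^{\mathbb C},\mbox{\rm Conv}\,\mathbb S_d^1\bigr)\leq d$, and translate the inclusion $\mathbb K^{\rm max}\subseteq c_d\cdot\mathbb K^{\rm min}$ into $\rho\leq c_d\,\omega$ via Theorem \ref{minmax} after normalising by $\omega({\sf T})$. The two points where you go beyond the paper --- treating $\omega({\sf T})=0$ separately, and reducing a general ${\sf T}\in\B(\H)^d$ to finite-dimensional compressions (needed because Theorem \ref{minmax} is stated only for matrix tuples, a step the paper's proof silently skips) --- are both correct as written; in particular the monotonicity $\omega({\sf T}')\leq\omega({\sf T})$ under compression via Proposition \ref{nr alg} and the block-by-block identity $\langle\mathcal T_{c_d}v,v\rangle=\langle\mathcal T'_{c_d}v,v\rangle$ check out.
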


\begin{proof} The set $\mbox{\rm Conv}\,\mathbb S_d^1$ is a compact, convex body with $0$ in its interior, by 
Proposition \ref{contain}. Therefore, Theorem \ref{cnst} applies to $\mbox{\rm Conv}\,\mathbb S_d^1$, leading to the 
existence of a positive constant $c_d=\theta(\mbox{\rm Conv}\,\mathbb S_d^1)$ with the properties
\[
\mathbb K^{\rm max}\left(\mbox{\rm Conv}\,\mathbb S_d^1\right) \subseteq 
c_d
\cdot 
\mathbb K^{\rm min}\left(\mbox{\rm Conv}\,\mathbb S_d^1\right)
\]
and
\[
2d \leq \delta\left(\mathbb B_{\ell^2(d)}^{\mathbb C}\,\mbox{\rm Conv}\,\mathbb S_d^1\right)\,c_d.
\]
Because $\delta\left(\mathbb B_{\ell^2(d)}^{\mathbb C}\,\mbox{\rm Conv}\,\mathbb S_d^1\right)\leq d$, we obtain from the inequality above that
$c_d\geq 2$.

Suppose now that ${\sf T}\in\B(\H)$ and let $\alpha=\omega({\sf T})$. Thus, $\tilde{\sf T}=\frac{1}{\alpha}{\sf T}$ has Toeplitz numerical radius
$\omega(\tilde{\sf T})=1$. By Theorem \ref{minmax}, $\tilde{\sf T}\in \mathbb K^{\rm max}\left(\mbox{\rm Conv}\,\mathbb S_d^1\right)$. Therefore, 
\[
\tilde{\sf T}\in \cdot 
\mathbb K^{\rm min}\left(\mbox{\rm Conv}\,\mathbb S_d^1\right),
\]
which implies $\rho(c_d^{-1}\tilde{\sf T})\leq 1 = \omega(\tilde{T})$ and, hence, $\rho({\sf T})\leq c_d\cdot\omega({\sf T})$.
\end{proof}

\section{Discussion}

The material presented here is inspired by the dilation theorem of Halmos and the theorems of
Ando and Gurvits on truncated moments and separable positive block Toeplitz matrices. 
As explained earlier, the underlying geometry for Halmos' dilation theorem is the geometry of
$\mbox{\rm Conv}\,S^1$, whereas the geometry inherent to the Ando-Gurvits theorems relies on the stretched version $\mathbb S_d^1$ of $S^1$
and its convex hull. Some earlier works, such as \cite{binding--farenick--li1995,fritz--netzer--thom2017}, established a dilation theory
for operator $d$-tuples in which the normal dilations have joint spectrum within a simplex. Works such as 
\cite{davisdon--dor-on--shalit--solel2017,passer--shalit--solel2018} advanced the theory of normal dilations substantially, showing that 
simplex-structures could be replaced by more general regions. 
Likewise, in the present paper, the 
geometry of $\mbox{\rm Conv}\,\mathbb S_d^1$ is not that of a simplex, but it is a subset of $\mathbb C^d$
in which the joint spectrum of unitary power dilations of Toeplitz-contractive $d$-tuples can be found. 

It is of interest to determine good estimates, or the exact value, for the constant $c_d$ in Theorem \ref{lb}. The lack of symmetry
in $\mbox{\rm Conv}\,\mathbb S_d^1$ is an obstacle in drawing upon known estimates for the Banach-Mazur distance between
$\mathbb B_{\ell^2(d)}^\mathbb C$ and other (usually symmetric) convex bodies.

\section*{Acknowledgement}

I am indebted to Tim Netzer for comments in support of the idea that a result such as the one presented in
Theorem \ref{lb} could be true, and to Igor Klep, who noted a small but crucial typo in my original version of
Example \ref{pairs}.

This work has been supported, in part, by a Discovery Grant awarded
by the Natural Sciences and Engineering Research Council of Canada.

\bibliographystyle{amsplain}

\end{document}